\newcounter{hyp}
\def\a{\alpha}
\def\b{\beta}
\def\l{\lambda}
\def\vphi{\varphi}
\def\t{\tau}
\def\e{\varepsilon}
\newcommand{\cC}{{\mathcal C}}
\newcommand{\cF}{{\mathcal F}}
\newcommand{\cG}{{\mathcal G}}
\newcommand{\cL}{{\mathcal L}}
\newcommand{\cP}{{\mathcal P}}
\newcommand{\cS}{{\mathcal S}}
\newcommand{\R}{{\mathbb R}}
\newcommand{\N}{\mathbb{N}}
\newcommand{\M}{{\mathbb R^d}}
\newcommand{\ov}{\overline}
\newcommand{\mres}{\mathbin{\vrule height 1.6ex depth 0pt width
0.13ex\vrule height 0.13ex depth 0pt width 1.3ex}}
\newcommand{\sD}{\mathscr{D}}
\newcommand{\sP}{{\mathscr P}}
\newcommand{\ds}{\displaystyle}
\newcommand{\dd}{\hspace{0.7pt}{\rm d}}
\newcommand{\be}{\begin{equation}}
\newcommand{\ee}{\end{equation}}
\newcommand{\ba}{\begin{array}}
\newcommand{\ea}{\end{array}}
\newtheorem{theorem}{Theorem}[section]
\newtheorem{definition}[theorem]{Definition}
\newtheorem{lemma}[theorem]{Lemma}
\newtheorem{corollary}[theorem]{Corollary}
\newtheorem{remark}[theorem]{Remark}
\providecommand{\customgenericname}{}
\newcommand{\newcustomtheorem}[2]{%
  \newenvironment{#1}[1]
  {%
   \renewcommand\customgenericname{#2}%
   \renewcommand\theinnercustomgeneric{##1}%
   \innercustomgeneric
  }
  {\endinnercustomgeneric}
}
\numberwithin{equation}{section}
\title[]{Mean Field Games systems under displacement monotonicity}
\author[A.R. M\'esz\'aros]{Alp\'ar R. M\'esz\'aros}  
\date{\today}
\address{Department of Mathematical Sciences, University of Durham, Durham DH1 3LE, England}
\email{alpar.r.meszaros@durham.ac.uk} 
\author[C. Mou]{Chenchen Mou}
\address{Department of Mathematics, City University of Hong Kong, Hong Kong SAR, China}
\email{chencmou@cityu.edu.hk}
\thanks{{\it Keywords and phrases}: mean field games; displacement monotonicity; well-posedness; non-separable Hamiltonians}
\thanks{%CM is the corresponding author\\
Data Availability: Data sharing not applicable to this article as no datasets were generated or analyzed during the current study.
}
\begin{document}
\maketitle

\begin{abstract} 
In this note we prove the uniqueness of solutions to a class of Mean Field Games systems subject to possibly degenerate individual noise. Our results hold true for arbitrary long time horizons and for general non-separable Hamiltonians that satisfy a so-called \emph{displacement monotonicity} condition. This monotonicity condition that we propose for non-separable Hamiltonians is sharper and more general than the one proposed in the work \cite{GanMesMouZha}. The displacement monotonicity assumptions imposed on the data provide actually not only uniqueness, but also the existence and regularity of the solutions. Our analysis uses elementary arguments and does not rely on the well-posedness of the corresponding master equations.
\end{abstract}

%%\nocite{*} -- used to check which references/labels have been used

\section{Introduction}

The theory of Mean Field Games (MFG for short in the sequel) was introduced around 2006 simultaneously by Lasry-Lions (\cite{LL06a, LL06b, LL07a, Lions}) and  Huang-Malham\'e-Caines (\cite{HCM06,HCM071,HCM072,HCM073}). Since then, its literature has witnessed a vast increase in various directions and the theory turned out to be extremely rich in applications.

In its simplest form (cf. \cite{Cardaliaguet,CarDelLasLio,CarPor,CarDel-I,CarDel-II}), an MFG can be fully characterized by the solutions of the following system of nonlinear PDEs.
\begin{equation}\label{eq:mfg}
\left\{
\begin{array}{ll}
-\partial_t u(t,x) -\frac{\beta^2}{2}\Delta_xu(t,x)+ H(x,-D_xu(t,x),\rho_t)=0,&  {\rm{in}}\ (0,T)\times\M,\\[3pt]
\partial_t\rho_t-\frac{\beta^2}{2}\Delta_x\rho_t+D_x\cdot(\rho_t D_pH(x,-D_x u(t,x),\rho_t))=0, & {\rm{in}}\ (0,T)\times\M,\\[3pt]
u(T,x)=g(x,\rho_T),\ \rho(0,\cdot)=\rho_0, & {\rm{in}}\ \M,
\end{array}
\right.
\end{equation}
where $\beta\in\R$ is the intensity of the individual noise, $T>0$ is a given time horizon, and $\rho_0\in\sP_2(\R^d)$ is the initial configuration of the agents. Here, the state space of the agents is represented by $\R^d$ and $\sP_2(\R^d)$ denotes the space of Borel probability measures on $\R^d$ with finite second moments. 

We underline that the unknown $u:[0,T]\times\R^d\to\R$ stands for the value function of a typical agent, who solves the control problem
\begin{align*}
u(t,x)=\inf_\alpha\mathbb{E}\left\{\int_t^T L(X_s,\alpha_s,\rho_s)\dd s+ g(X_T,\rho_T)\right\},\ \ {\rm{s.\ t.}}\ \ 
\left\{
\begin{array}{ll}
\dd X_s=\alpha_s\dd s +\beta\dd B_s, & s\in(t,T),\\
X_t = x,
\end{array}
\right.
\end{align*}
where $g:\R^d\times\sP(\R^d)\to\R$ is a given final cost function and $L:\R^d\times\R^d\times\sP(\R^d)\to\R$ is a given Lagrangian function  that models the running cost. The unknown $\rho:[0,T]\to\sP(\R^d)$, the distribution of the agent population, enters into this optimization problem. The Hamiltonian $H:\R^d\times\R^d\times\sP(\R^d)\to\R$ is simply defined as $H(x,\cdot,\mu)=L^*(x,\cdot,\mu)$ for all $x\in\M$ and $\mu\in\sP(\R^d)$, i.e. it is the Legendre--Fenchel transform of $L$, in its second variable. When $\beta= 0$, the model becomes deterministic.

By now, the well-posedness of system \eqref{eq:mfg} is well understood in many different settings and the first results date back to the original works of Lasry and Lions and have been presented in the course of Lions at Coll\`ege de France (cf. \cite{Lions}). A complete account on the progress of the literature on this subject has been recently published in the self contained and well-written lecture notes \cite{CarPor} from the PDE viewpoint and in the monographs \cite{CarDel-I,CarDel-II} from the probabilistic viewpoint. Let us now discuss the state of the art of the literature, that will be relevant for our considerations.

\medskip

\noindent {\bf Literature overview.} Regarding $H$ and $g$, we can consider nonlocal (regularizing) and local dependence on the measure variable $\mu$. If $\beta\neq 0$, system \eqref{eq:mfg} possesses a parabolic structure. When $H$ and $g$ are nonlocal and regularizing in the measure variable, it is fairly straightforward to obtain existence of a classical solution under very general assumptions on $H$ and $g$ for any $\rho_0\in\sP(\R^d)$ and $T>0$ (cf. \cite{CarPor, CarDel-I}). 

If $H$ and $g$ are local functions of the density variable, for general Hamiltonians the well-posedness result (classical or weak solutions) is known only for short time (cf. \cite{Amb:18, Amb:21, CirGiaMan}). For arbitrary long time horizon $T>0$, the existence of (classical or weak) solutions  is known only under additional structural assumptions on $H$. This is for instance, when $H$ possesses a so-called {\it separable} structure (cf. \cite{CarPor,CirGof,GomPimSan:15, GomPimSan:16,Por}, i.e. the momentum and measure variables are additively separated, having the form of 
\begin{equation}\label{sep}
H(x,p,\mu)=H_0(x,p)-f(x,\mu)
\end{equation} 
for some $H_0$ and $f$.

When $\beta=0$ and $T>0$ is arbitrary,  the existence of a weak solution to \eqref{eq:mfg} is known only under the condition \eqref{sep} and with extra assumptions on the initial measure $\rho_0$ (such as boundedness or compact support, see \cite{Cardaliaguet,CarPor} in the case of nonlocal regularizing data $f,g$; and \cite{Car, CarGra,CarGraPorTon, CarPor} in the case of locally depending $f,g$ on the $\mu$ variable). 

\medskip

Now, let us turn our attention to the question of uniqueness of solutions to \eqref{eq:mfg}. As expected, this is a more subtle question and additional assumptions must be imposed to hope for positive results in this direction. Already in their original works, when \eqref{sep} takes place, Lasry and Lions proposed a notion of monotonicity (which bears the name of {\it Lasry--Lions monotonicity} in the literature now) on the coupling functions $f$ and $g$ under which uniqueness of solutions to \eqref{eq:mfg} can be obtained, as long as they are regular enough. Indeed, let us underline, that for instance when $\beta= 0$ and $f$ and $g$ are nonlocal and regularizing, the Lasry--Lions monotonicity implies the uniqueness of solutions as long as the measure component $\mu$ of the solution is essentially bounded (cf. \cite[Theorem 1.8]{CarPor}). When $\beta\neq0$ the parabolic regularity kicks in, and so the uniqueness result holds under the Lasry--Lions monotonicity condition, without any additional assumption (cf. \cite[Theorem 1.4]{CarPor}). When $f$ and $g$ are local functions of the density variable, the existence and (partial) uniqueness of weak solutions can be obtained by variational techniques as in \cite{Car,CarGra, CarGraPorTon}.

\medskip

Relying on the examples of non-uniqueness of solutions in the lack of the Lasry--Lions monotonicity, provided in \cite{BarFis, BayZha, BriCar,CarDel-I, Lions}, one might wonder whether the Lasry--Lions monotonicity is a necessary condition for the uniqueness of solutions. 
%Actually, until a couple of year ago, in the literature it was expected that this monotonicity is indeed a necessary condition (see the discussions in \cite{CarPor}). 
When $H$ and $g$ are nonlocal regularizing functions in the measure variable, until recently the global in time uniqueness of solutions to \eqref{eq:mfg} was essentially known only in the regime of separable Hamiltonians that satisfy the Lasry--Lions monotonicity condition. In this paper our goal is to present a different regime which can take care of a class of data outside of the scope of the Lasry--Lions monotonicity.

\medskip

When $H$ and $g$ are local functions of the density variable, Lions in his lectures (cf. \cite{Lions}) provided a general monotonicity condition on $H$ which yields the uniqueness of solutions (see also \cite{AchPor, LioSou}, where this condition has been exploited). Finally, recently a general framework based on monotone operators in Banach spaces (cf. \cite{FerGom, FerGomTad}) has been proposed to show the well-posedness of general MFG systems. These all can be seen as generalizations of the Lasry--Lions monotonicity condition in the case of  possibly non-separable, but special Hamiltonians, depending locally on the density variable. We underline that to the best of our knowledge, no such generalization of the Lasry--Lions monotonicity is known in the case of non-separable Hamiltonians that are nonlocal in the measure variable. The Lasry--Lions monotonicity condition is certainly a sufficient one, which in many cases provides the well-posedness (hence uniqueness) of MFG systems. In some cases it can be even used to obtain higher order regularity of weak solutions to first order local systems (cf. \cite{GraMes,GraMesSilTon}) and stability and convergence of numerical schemes (\cite{AlmFerGom, GomSau:21}). 

\medskip

The uniqueness and stability of solutions to \eqref{eq:mfg} plays an instrumental role in the theory. For instance, the well-posedness of the associated {\it master equations} -- introduced by Lions -- is known so far only under the uniqueness and stability of solutions to the MFG system (cf. \cite{CarDelLasLio, CarDel-II, ChaCriDel, MouZha}). On contrary, the well-posedness of the master equation also implies uniqueness of the associated MFG system.

\medskip

The recent results \cite{ GanMesMouZha} on the well-posedness of the master equations in the presence of individual and common noises in a different regime of monotonicity (the so-called {\it displacement monotonicity}) suggests that there are conditions other than the Lasry--Lions monotonicity that could lead to the global in time well-posedness of master equations. 
As it is detailed in \cite{GanMesMouZha}, the displacement monotonicity condition is in general in dichotomy with the Lasry--Lions monotonicity, and it allows to treat a general class of non-separable Hamiltonians. 
We note that the displacement monotonicity stems from the notion of displacement convexity arising in optimal transport theory (cf. \cite{McCann1997}), which has been already used to study potential MFG in the deterministic case (cf. \cite{BenGraYam,GanMes}) and in the stochastic case (cf. \cite{ChaCriDel}). It seems that \cite{Ahuja} (see also \cite{CarDel:15,AhujaRenYang,ChaCriDel}), whose weak monotonicity condition is essentially equivalent to the displacement monotonicity (in the case of particular separable Hamiltonians), is the first work that relied on displacement monotonicity to study the well-posedness of McKean--Vlasov FBSDEs with a special form. Interestingly, the nonlocal coupling function considered in \cite{Bar} has a displacement monotone structure. So we believe that our techniques might lead to a better understanding of the uniqueness issues raised there.

\medskip

Thus, the purpose of this manuscript is to present the well-posedness of the MFG system \eqref{eq:mfg} in the case of a general class of non-separable Hamiltonians and final cost functions that possess the appropriate displacement monotonicity assumptions, for arbitrary time horizons $T$ and possibly degenerate individual noise. We emphasize that in this note we are using only elementary analysis. This means that we do not rely on the well-posedness of the corresponding master equations. In particular, we are using only some classical tools from stochastic control theory, the theory of viscosity solutions, and Fokker--Planck type equations. 

%
%{\color{red}Indeed, the work \cite{GanMesMouZha} requires $H$ to be of class $C^5$ with a specific linear growth in $p$ for $D_xH$, while the final datum is imposed to be of class $C^4$. More importantly the derivatives $D_xg,\partial_\mu g$ and $\partial^2_{x\mu}g$ are all supposed to be uniformly bounded. This unfortunately means that for instance final costs having quadratic growth in $x$ or ones for which $\partial_\mu g$ is unbounded cannot be treated by \cite{GanMesMouZha}. In fact, similar conditions on the final data are also imposed in \cite{CarDel-II} for the global in time well-posedness of the master equation for separable Hamiltonians and Lasry-Lions monotone data. In contrast with this, in this work we can allow the data to have merely $C^{1,1}_{\rm{loc}}$ regularity and our analysis is less heavy than the one in \cite{GanMesMouZha}. In addition to these, the displacement monotonicity condition formulated on the non-separable Lagrangian is involving only first order derivatives (compared to the one in \cite{GanMesMouZha} that involves second order derivatives) and can be seen as the natural extension of the similar condition imposed on the final datum.}

\medskip

\noindent {\bf Our main results.} The heart of our analysis lies in the fact that the displacement monotonicity assumption on the data (which in fact implies convexity of $x\mapsto g(x,\mu)$ and $(x,v)\mapsto L(x,v,\mu)$ for all $\mu\in\sP_2(\mathbb R^d)$) together with classical results from optimal control theory imply that the solution $u(t,\cdot)$ of the HJB equation from \eqref{eq:mfg} has a $C^{1,1}$ a priori estimate in the space variable $x$, independently of the intensity $\beta$ of the noise. First, having this regularity in hand yields the existence of a solution to the system \eqref{eq:mfg} when $\beta=0$. Indeed, in the deterministic case (i.e. $\beta= 0$) the solution to the continuity equation can be represented via the flow of a Lipschitz continuous vector field, so we can get enough compactness to formulate a fixed point problem, which in turn yields the existence result. Let us remark that in the lack of such a priori estimate on $u$ (that would have only semi-concavity estimates), a more sophisticated argument is needed (by passing though the DiPerna--Lions theory) to obtain weak solutions to the continuity equation (as explained in \cite{CarPor,CarSou}), and so, additional assumptions on the structure of the Hamiltonian and the initial measure $\rho_0$ seem to be necessary. Our existence results in the case of deterministic problems seem to be new in the literature (as we can consider general initial measures $\rho_0\in\sP_2(\R^d)$). The philosophy behind our results is the same also in the parabolic setting, when $\b\neq0$. We state in an informal way here one of our main results and will give the full details on it in Theorem \ref{thm:exist_deter}.

\begin{theorem}
Assume that the Lagrangian function $L$ and the final cost function $g$ satisfy certain regularity conditions and growth conditions. Assume further that the functions $x\mapsto g(x,\mu)$ and $(x,v)\mapsto L(x,v,\mu)$ are convex. Then the mean field game system \eqref{eq:mfg}, with $\beta=0$, has a solution pair $(u,\rho)$. 
\end{theorem}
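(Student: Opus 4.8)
The plan is to run a Schauder fixed point argument on a space of measure flows, with the $C^{1,1}$ a priori estimate for the value function providing the compactness. Fix a flow $\mu=(\mu_t)_{t\in[0,T]}\in C([0,T];\sP_2(\R^d))$ with $\mu_0=\rho_0$, and first solve the Hamilton--Jacobi--Bellman equation $-\partial_t u+H(x,-D_xu,\mu_t)=0$, $u(T,\cdot)=g(\cdot,\mu_T)$, via (deterministic, $\beta=0$) optimal control: the unique viscosity solution $u^\mu$ coincides with the value function of the control problem driven by $\mu$, and is Lipschitz in $t$. Here the convexity hypotheses enter. Since the controlled dynamics $\dot X_s=\alpha_s$ are affine, the functional $(x,\alpha)\mapsto\int_t^T L(X_s,\alpha_s,\mu_s)\dd s+g(X_T,\mu_T)$ is jointly convex whenever $(x,v)\mapsto L(x,v,\mu)$ and $x\mapsto g(x,\mu)$ are convex, hence its partial minimization $x\mapsto u^\mu(t,x)$ is convex; meanwhile the regularity of $L$ and $g$ (bounded-above second derivatives / semiconcavity) propagates along the value function to a uniform semiconcavity bound on $u^\mu(t,\cdot)$. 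Combining the two one-sided Hessian bounds gives a $C^{1,1}$ bound on $u^\mu(t,\cdot)$, together with an at-most-linear growth bound for $D_xu^\mu$, both uniform over $\mu$ ranging in a bounded set of measure flows --- this is exactly where the growth conditions on $L,g$ are used.

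Second, given $u^\mu$ I would define the feedback velocity field $b^\mu(t,x):=D_pH(x,-D_xu^\mu(t,x),\mu_t)$. By the preceding estimate $b^\mu$ is Lipschitz in $x$ uniformly on compacts, measurable in $t$, and of linear growth, so the Carath\'eodory flow $\partial_t\Phi^\mu_t(x)=b^\mu(t,\Phi^\mu_t(x))$, $\Phi^\mu_0(x)=x$, is globally well defined with no blow-up, and $\rho^\mu_t:=(\Phi^\mu_t)_\#\rho_0$ is the unique weak solution of the continuity equation $\partial_t\rho_t+D_x\cdot(\rho_t\,b^\mu(t,\cdot))=0$ with $\rho^\mu_0=\rho_0$. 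Set $\Phi(\mu):=\rho^\mu$. Using the linear growth of $b^\mu$ and Gr\"onwall's inequality one controls $\sup_t\int|x|^2\dd\rho^\mu_t$ and the $W_2$-Lipschitz modulus of $t\mapsto\rho^\mu_t$, so that, for a suitable choice of constants, $\Phi$ leaves invariant a convex set $\cC$ of measure flows which is compact for an appropriate Wasserstein-type topology (the uniform second moment bound yields tightness, the uniform time-Lipschitz bound yields equicontinuity, and a standard Arzel\`a--Ascoli argument closes it).

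Third, I would verify that $\Phi$ is continuous on $\cC$. If $\mu^n\to\mu$ in $\cC$, then by stability of viscosity solutions --- equivalently, dominated convergence in the control representation, using continuity of $H$ and $g$ in the measure variable --- one gets $u^{\mu^n}\to u^\mu$ locally uniformly; the uniform $C^{1,1}$ bounds upgrade this to $D_xu^{\mu^n}\to D_xu^\mu$ locally uniformly, since any subsequential Arzel\`a--Ascoli limit of the gradients must be $D_xu^\mu$. Hence $b^{\mu^n}\to b^\mu$ locally uniformly in $x$, continuous dependence of ODE solutions on the vector field gives $\Phi^{\mu^n}_t(x)\to\Phi^\mu_t(x)$, and therefore $\rho^{\mu^n}_t=(\Phi^{\mu^n}_t)_\#\rho_0\to(\Phi^\mu_t)_\#\rho_0=\rho^\mu_t$, uniformly in $t$. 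The Schauder fixed point theorem (in its locally convex version) then produces $\rho\in\cC$ with $\Phi(\rho)=\rho$; the pair $(u,\rho)$ with $u:=u^\rho$ solves \eqref{eq:mfg} for $\beta=0$ by construction, since $u$ solves the HJB equation driven by $\rho$, and $\rho$ solves the continuity equation with velocity $D_pH(x,-D_xu,\rho_t)$.

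The main obstacle is the uniform two-sided Hessian bound on $u^\mu$. The lower bound (convexity) is soft, obtained from joint convexity of the cost plus partial minimization; the upper bound requires propagating the semiconcavity of the data along the value function with a constant independent of $\mu$ (and, in the parabolic analogue, of $\beta$), and one must simultaneously keep the growth of $D_xu^\mu$ genuinely linear so that the induced velocity field admits a global Lipschitz flow. These are the points where the precise regularity and growth assumptions on $L$ and $g$ do the real work; once they are in place, the flow representation of the continuity equation, the compactness of $\cC$, and the fixed point step are routine.
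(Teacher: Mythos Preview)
Your proposal is correct and follows the same overall strategy as the paper: obtain a uniform $C^{1,1}_x$ bound on the value function by combining convexity (from joint convexity of the cost and partial minimization in the control representation) with standard semiconcavity, use this to make the feedback velocity globally Lipschitz in $x$ with linear growth, represent the continuity equation by its characteristic flow, and close with Schauder on a compact convex set of measure curves via Arzel\`a--Ascoli.

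Two small differences are worth noting. First, the paper works in $C([0,T];(\sP_1(\R^d),W_1))$ rather than $\sP_2$; this is the right choice for compactness, since a uniform second-moment bound gives $W_1$-relative compactness of the time sections but not $W_2$-relative compactness, so your ``appropriate Wasserstein-type topology'' should be $W_1$. Second, the paper's fixed-point map feeds the input curve $\rho$ only into the HJB equation and then, given $u=u^\rho$, solves the \emph{nonlinear} continuity equation $\partial_t\bar\rho+D_x\cdot(\bar\rho\,D_pH(x,-D_xu,\bar\rho_t))=0$ at each step, which requires a separate well-posedness lemma for that nonlinear transport problem. You instead freeze the measure argument of $D_pH$ at the input $\mu_t$, making the continuity equation linear and solvable directly by the flow. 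Your variant is simpler and equally valid, since the two maps have the same fixed points.
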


\medskip

The a priori $C^{1,1}$ regularity on $u(t,\cdot)$ justifies the space of the solutions we consider for the uniqueness. Furthermore, this has another deep consequence: together with the displacement monotonicity of the data, this implies %the displacement monotonicity of the solution to the corresponding master equation.
a sort of monotonicity property for the difference $D_xu^1-D_xu^2$ along any two solutions $(u^1,\rho^1)$ and $(u^2,\rho^2)$ to the systems \eqref{eq:mfg} with initial distributions $\rho^1_0$ and $\rho^2_0$, respectively. 
%(cf. Theorem \ref{thm:mon_momentum}). 
In fact this result implies the propagation of the displacement monotonicity of the solution to the corresponding master equation. This is the crucial property that yields a Gr\"onwall type estimate on $W_2(\rho^1_t,\rho^2_t)$ from where the uniqueness follows. More precisely, we establish the following stability result, which is stated informally here. See Theorem \ref{thm:unique-decay} for the precise result.

\begin{theorem}
Assume that the Lagrangian function $L$ and the final cost function $g$ satisfy certain regularity conditions and growth conditions. Assume further that the functions $L$ and $g$ satisfy the displacement monotonicity condition. Let $(u^1,\rho^1)$ and $(u^2,\rho^2)$ be two solution pairs to \eqref{eq:mfg} with initial data $\rho^1_0,\rho^2_0\in\sP_2(\R^d)$, respectively. Then there exists $C>0$ depending only on $T$ and the data such that
\begin{align*}
\sup_{t\in[0,T]}W_2(\rho^1_t,\rho^2_t)\le C W_2(\rho^1_0,\rho^2_0),
\end{align*}
and
\begin{align*}
\sup_{t\in[0,T]}\left\|D_xu^1(t,\cdot)-D_xu^2(t,\cdot)\right\|_{L^\infty(\mathbb R^d)}\le CW_2(\rho_0^1,\rho_0^2).
\end{align*}
\end{theorem}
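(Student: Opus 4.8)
\medskip

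\noindent\emph{Proof strategy.} The plan is to fix coupled characteristics for the two solutions and to extract both asserted estimates from a single scalar differential inequality; the displacement monotonicity of $L$ and $g$ enters through the a priori $C^{1,1}$ bound on $u^i(t,\cdot)$ supplied by the regularity theory (Theorem \ref{thm:exist_deter} and its parabolic counterpart). By that bound the drifts $x\mapsto D_pH(x,-D_xu^i(t,x),\rho^i_t)$ are globally Lipschitz, so the stochastic characteristics
\[
dX^i_t=D_pH\big(X^i_t,-D_xu^i(t,X^i_t),\rho^i_t\big)\,dt+\beta\,dB_t,\qquad \mathrm{Law}(X^i_0)=\rho^i_0,
\]
are well defined; I would drive both by a \emph{common} Brownian motion $B$ and start them from an optimal coupling of $\rho^1_0$ and $\rho^2_0$, and by uniqueness for the Fokker--Planck equation $\mathrm{Law}(X^i_t)=\rho^i_t$ for all $t$. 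Writing $p^i_t:=D_xu^i(t,X^i_t)$ and letting $v^i_t$ denote the drift of $X^i_t$, Itô's formula and the $x$-differentiated Hamilton--Jacobi equation give $dp^i_t=-D_xL(X^i_t,v^i_t,\rho^i_t)\,dt+\beta D^2_xu^i(t,X^i_t)\,dB_t$, while $p^i_t=-D_vL(X^i_t,v^i_t,\rho^i_t)$ by the Legendre relation. The structural point that makes the computation work is that $X^1_t-X^2_t$ has \emph{no} martingale part --- the two diffusion coefficients both equal $\beta I$ and cancel --- so every Itô cross--variation against $X^1_t-X^2_t$ vanishes.

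\smallskip

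\noindent\emph{Step 1 (propagation of displacement monotonicity).} Set $\phi(t):=\mathbb{E}\langle p^1_t-p^2_t,\,X^1_t-X^2_t\rangle$. Using the dynamics above, the vanishing cross--variation, and $p^i_t=-D_vL(X^i_t,v^i_t,\rho^i_t)$, one obtains
\[
\phi'(t)=-\,\mathbb{E}\big[\big\langle D_xL(X^1_t,v^1_t,\rho^1_t)-D_xL(X^2_t,v^2_t,\rho^2_t),\,X^1_t-X^2_t\big\rangle+\big\langle D_vL(X^1_t,v^1_t,\rho^1_t)-D_vL(X^2_t,v^2_t,\rho^2_t),\,v^1_t-v^2_t\big\rangle\big].
\]
Since $D_vL(X^i_t,v^i_t,\mathrm{Law}(X^i_t))=-D_xu^i(t,X^i_t)$ with $u^i(t,\cdot)$ convex (displacement convexity of the data) and $C^{1,1}$, the configuration $(X^i_t,v^i_t)$ is admissible for the displacement monotonicity assumption placed on $L$; that assumption is tailored so that the bracketed expectation is $\ge-C\phi(t)$ --- in particular the measure--mismatch contribution is absorbed by the monotone--in--$\mu$ part of the hypothesis --- whence $\phi'(t)\le C\phi(t)$. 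On the other hand $p^i_T=D_xg(X^i_T,\rho^i_T)$ with $\rho^i_T=\mathrm{Law}(X^i_T)$, so displacement monotonicity of $g$ gives $\phi(T)\ge0$, and a backward Grönwall applied to $e^{-Ct}\phi(t)$ yields
\[
\phi(t)\ge0\qquad\text{for all }t\in[0,T].
\]
This is the analytic content of the statement that the would--be master field $U(t,\cdot,\cdot)$ stays displacement monotone along the flow.

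\smallskip

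\noindent\emph{Step 2 (Grönwall for $W_2$).} Let $m(t):=\mathbb{E}|X^1_t-X^2_t|^2\ge W_2(\rho^1_t,\rho^2_t)^2$; since $X^1_t-X^2_t$ is absolutely continuous in $t$, $m'(t)=2\,\mathbb{E}\langle X^1_t-X^2_t,v^1_t-v^2_t\rangle$. Using the uniform strong convexity of $L$ in the velocity variable, the Lipschitz dependence of $D_xL,D_vL$ on $(x,\mu)$, and the formula for $\phi'$ above, one bounds $\mathbb{E}|v^1_t-v^2_t|^2$ from above by $C\,m(t)$ plus a multiple of $-\phi'(t)$, and hence regroups everything into
\[
\frac{d}{dt}\big(m(t)+C\,\phi(t)\big)\le C\,m(t).
\]
Integrating from $0$ and discarding the nonnegative $C\,\phi(t)$ from Step 1 gives $m(t)\le\big(m(0)+C\,\phi(0)\big)e^{Ct}$; here $\phi(0)$ is bounded by $\Lambda m(0)$ plus a term of the form $\|D_xu^1(0,\cdot)-D_xu^2(0,\cdot)\|_\infty\,\sqrt{m(0)}$, which in turn is controlled by $\sup_s W_2(\rho^1_s,\rho^2_s)$ using the $C^{1,1}$ bound, the $C^0$-stability $\|u^1(t,\cdot)-u^2(t,\cdot)\|_\infty\le C\sup_s W_2(\rho^1_s,\rho^2_s)$ (obtained by testing one problem with the other's optimal control), and the Lipschitz-in-$\mu$ regularity of the data. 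Closing this elementary system of inequalities and using $m(0)=W_2(\rho^1_0,\rho^2_0)^2$ yields
\[
\sup_{t\in[0,T]}W_2(\rho^1_t,\rho^2_t)^2\le\sup_{t\in[0,T]}m(t)\le C\,W_2(\rho^1_0,\rho^2_0)^2,
\]
the first asserted bound. The second then follows from the Hamilton--Jacobi stability estimate $\sup_t\|D_xu^1(t,\cdot)-D_xu^2(t,\cdot)\|_\infty\le C\sup_s W_2(\rho^1_s,\rho^2_s)$ --- a consequence of the uniform $C^{1,1}$ regularity and classical optimal control estimates for the optimal trajectory and its costate --- combined with the $W_2$ bound just obtained.

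\smallskip

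\noindent The main obstacle is Step 1: verifying that the sharper displacement monotonicity condition proposed here for the non-separable $L$ does produce $\phi'(t)\le C\phi(t)$, i.e.\ that the joint $(x,v)$-variation of $D_{(x,v)}L$ together with the measure mismatch is controlled by $\phi$ itself. This is where both the convexity of $u^i(t,\cdot)$ --- which places $(X^i_t,v^i_t)$ in the admissible class --- and the precise form of the condition are essential; a naive pointwise monotonicity would not suffice. A secondary, purely technical point is that the $W_2$-stability and the gradient stability are a priori intertwined through $\phi(0)$, so the two estimates must be closed simultaneously; this is harmless since all Grönwall constants are allowed to depend on the fixed horizon $T$.
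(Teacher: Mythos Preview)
Your outline follows the paper's proof almost verbatim: couple the characteristics via a common Brownian motion and an optimal initial coupling, establish the sign of $\phi(t):=\mathbb{E}\langle D_xu^1(t,X^1_t)-D_xu^2(t,X^2_t),\,X^1_t-X^2_t\rangle$, feed the identity for $\phi'$ into a Gr\"onwall estimate for $\mathbb{E}|X^1_t-X^2_t|^2$, and close against the stability of $D_xu^i$ obtained from the costate (FBSDE) representation. Two points deserve correction.

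\textbf{Step 1 is simpler than you make it.} The displacement monotonicity hypothesis \eqref{hyp:L-mon-2} is stated for \emph{arbitrary} $X^1,X^2,Z^1,Z^2\in\mathbb L^2(\cF_T)$ with $\cL_{X^i}=\mu_i$; there is no ``admissibility'' requirement, and in particular the convexity of $u^i(t,\cdot)$ plays no role here. Since $\rho^i_t=\cL_{X^i_t}$, the bracketed expectation in your formula for $\phi'$ is nonnegative \emph{outright}, so $\phi'(t)\le 0$, not merely $\phi'(t)\le C\phi(t)$. Combined with $\phi(T)\ge 0$ from \eqref{hyp:g_mon-2}, this gives $\phi(t)\ge 0$ with no Gr\"onwall step. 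This is exactly Theorem~\ref{thm:mon_momentum} in the paper.

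\textbf{Step 2 uses the wrong convexity.} You invoke ``uniform strong convexity of $L$ in the velocity variable'' to bound $\mathbb{E}|v^1_t-v^2_t|^2$. That is not among the hypotheses: \eqref{hyp:H-conv-in-p} gives only $D^2_{vv}L\le c_0^{-1}I_d$ (equivalently $D^2_{pp}H\ge c_0 I_d$). The paper therefore works in the Hamiltonian variables $Y^i_t=-D_xu^i(t,X^i_t)$: from the identity for $\phi'$ and $D^2_{pp}H\ge c_0 I_d$ one gets $c_0\,\mathbb{E}|Y^1_t-Y^2_t|^2\le -\phi'(t)+C\,m(t)$, and then $|v^1-v^2|\le C(|X^1-X^2|+|Y^1-Y^2|)$ by the Lipschitz bound on $D_pH$. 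Your differential inequality $\frac{d}{dt}(m+C\phi)\le Cm$ is then correct, but for this reason, not the one you stated. The remainder --- bounding $\phi(0)$ by $m(0)$ and $\|D_xu^1(0,\cdot)-D_xu^2(0,\cdot)\|_{L^\infty}\sqrt{m(0)}$, controlling the latter by $\sup_s W_2(\rho^1_s,\rho^2_s)$ via the costate/FBSDE representation (not via $C^0$-stability plus interpolation, which would only yield a square-root rate), and closing with Young's inequality --- matches the paper's argument.
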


\medskip
\noindent {\bf Comparison with earlier results involving displacement monotone data.}
As mentioned above, displacement monotonicity (although under a different name) was used for the first time in the context of mean field games in the work \cite{Ahuja} (see also \cite{CarDel:15,AhujaRenYang}) to show uniqueness of solutions to MFG with common noise. These works involved separable Hamiltonians. Later, in the context of the master equations displacement convexity and monotonicity (in the case of separable Hamiltonians) was used in the works \cite{BenGraYam,ChaCriDel,GanMes}. Displacement monotonicity to study mean field games master equations in the case of non-separable Hamiltonians was proposed in the recent work \cite{GanMesMouZha}. Although the well-posedness of the master equation implies the uniqueness of solutions to the corresponding MFG system, the standing assumptions in this manuscript differ significantly from the ones in \cite{GanMesMouZha}. Firstly, \cite{GanMesMouZha} imposed the presence of a non-degenerate noise (i.e. $\beta\neq0$), and the current manuscript is able to handle degenerate, deterministic problems. Secondly, \cite{GanMesMouZha} assumed that the data have high order derivatives (and various bounds on those). In particular, there for instance the final condition $g$ cannot have quadratic growth at infinity in the $x$-variable. In the current manuscript we impose merely $C^{1,1}$ type regularity assumptions on the data, that can have more general growth condition at infinity. Last, but most importantly, the displacement monotonicity assumption on non-separable Hamiltonians that we propose in this manuscript improves the corresponding condition from \cite{GanMesMouZha}. We show that the condition from \cite{GanMesMouZha} always implies our newly proposed condition, but these are in general not equivalent. 

Finally we would like mention that there are some other works on mean field games and planning problems which also use techniques relying on displacement convexity (not the displacement monotonicity). In \cite{BakFerGom,GomSen,LavSan:15} the authors identified functionals on probability measures which are convex along the measure flow component of first order MFG systems (or along discrete in time iterations of such) and planning problems. This information is then used to obtain new a priori estimates. In certain cases, these in particular could lead to $L^\infty$ estimates for the density of the distribution of the agents, in the case of deterministic problems.

\medskip

We expect that the techniques developed in this manuscript could be applied to study various other problems in the regime of displacement monotone data, such as the long time behavior of both MFG systems and master equations (cf. \cite{CarPor:19,CarPor, CirPor}), weak solutions for the master equation (cf. \cite{MouZha}), classical solutions to degenerate master equations subject to common noise (cf. \cite{CarSou}) and others. We believe that our newly proposed displacement monotonicity condition for non-separable Hamiltonians could serve as a sharper condition for the well-posedness of the corresponding master equation. We leave such investigations to future works.

\medskip

The structure of the rest of the paper is given as follows. In Section \ref{sec:assumptions} we have collected all the assumptions on our data. Section \ref{sec:existence} presents the existence of a solution to the system \eqref{eq:mfg}. We end the note with Section \ref{sec:uniqueness} that contains the main results on the uniqueness of the solutions.

\section{Standing assumptions}\label{sec:assumptions}

Throughout the note, let $T>0$ be any given arbitrary time horizon, and $(\Omega,\mathbb F,\mathbb P)$ be a filtered probability spaces, on which is defined a standard $d$-dimensional Brownian motion $B$. For $\mathbb F=\{\mathcal{F}_t\}_{0\leq t\leq T}$, we assume $\mathcal{F}_t=\mathcal{F}_0\vee\mathcal{F}^{B}$, and $\mathbb P$ has no atom in $\mathcal{F}_0$, so it can support any measure on $\mathbb R^d$ with finite second moment, i.e. the map $X\mapsto \mathbb{P}\circ X^{-1}$ is surjective from $\mathbb{L}^2(\cF_T)\to\sP_2(\R^d)$.

Let us introduce now the Wasserstein space and the differential calculus on it. For any $p\geq 1$, let $\sP_p(\mathbb R^d)$ stand for the set of Borel probability measures with finite $p$-moment and for $\mu\in\sP_p(\R^d)$ we denote its $p$-moment by $M_p(\mu):=\left(\int_{\R^d}|x|^p\dd\mu(x)\right)^{\frac1p}$. For any sub-$\sigma$-field $\cG\subset \mathcal{F}_T$ and $\mu\in\sP_p(\mathbb R^d)$, denote by $\mathbb L^p(\cG)$ the set of $\mathbb R^d$-valued, $\cG$-measurable, and $p$-integrable random variables $\xi$; and by $\mathbb L^{p}(\cG;\mu)$ the set of $\xi\in\mathbb L^p(\cG)$ such that the law $\cL_{\xi}:=\xi_\sharp\mathbb{P}=\mu$. Here, by $\xi_\sharp\mathbb{P}=\mu$ we denoted the push-forward of $\mathbb{P}$ by $\xi$, i.e. $\mu(A)=\mathbb P\circ \xi^{-1}(A)$ for any Borel set $A\subseteq\mathbb R^d$. If $A\subseteq \R^d$ is a Borel set, by $\mu\mres A$ we denote the restriction of $\mu$ to $A$, i.e. $(\mu\mres A)(D):=\mu(A\cap D)$ for any Borel set $D\subseteq\R^d$.

For any $\mu,\nu\in\sP(\mathbb R^d)$, their $W_p$-Wasserstein distance is defined as
\begin{equation}\label{eq:Wass}
W_p(\mu,\nu):=\inf\left\{\left(\mathbb E[|\xi-\eta|^p]\right)^{\frac{1}{p}}:\,\,\text{for any $\xi\in\mathbb L^p(\mathcal{F}_T;\mu)$, $\eta\in\mathbb L^p(\mathcal{F}_T;\nu)$}\right\}.
\end{equation}
Let $C^0(\sP_2(\mathbb R^d))$ denote the set of $W_2$-continuous functions and $C^{0,1}(\sP_2(\mathbb R^d))$ denote the set of $W_2$-Lipschitz continuous functions. $U:\sP_2(\mathbb R^d)\to\R$ is said to be differentiable at $\mu\in\sP_2(\R^d)$ (cf. \cite{AmbGigSav,GanTud:19, Lions}) %,WuZhang}) 
if the Wasserstein gradient $D_{\mu}U(\mu,\cdot)$ -- as an element of $\overline{\nabla C_c^\infty(\R^d)}^{L^2(\mu)}$ -- exists and one has the expansion %takes the form $\partial_{\mu}U:\sP_2(\mathbb R^d)\times\mathbb R^d\to\mathbb R^d$ and satisfies
\[
U(\mathcal{L}_{\xi+\eta})-U(\mathcal{L_{\xi}})=\mathbb E\left[\langle D_{\mu}U(\mu,\xi),\eta\rangle\right]+o(\|\eta\|_2),\quad \text{for any $\xi,\eta\in\mathbb L^2(\mathcal{F}_T)$.}
\]

Let $g:\M\times\sP_2(\M)\to\R$ be the terminal cost, and let $L:\M\times\R^d\times \sP_2(\M)\to\R$ be the Lagrangian function. We further make the following assumptions on $g,L$:

\begin{equation}\label{hyp:H-L}
L(\cdot,\cdot,\mu)\in C_{\rm loc}^{1,1}(\M\times\R^d), g(\cdot,\mu)\in C_{\rm loc}^{1,1}(\R^d), {\rm{uniformly\ in\ }} \mu. \tag{H\arabic{hyp}}
\end{equation}
\stepcounter{hyp}

\begin{align}\label{hyp:H-L-g-Lip_cont}
g(x,\cdot)\ {\text{is continuous in $\mu$ with\ respect\ to\ }}W_1,\ {\text{locally uniformly in }} \mathbb R^d\qquad\qquad \qquad\tag{H\arabic{hyp}}\\
\text{and } L(x,v,\cdot)\ {\text{is continuous in $\mu$ with\ respect\ to\ }}W_1,\ {\text{locally uniformly in }} \mathbb R^d\times \R^d.\nonumber
%\text{for any } R>0,\, L(x,v,\cdot)\ {\text{is continuous in $\mu$ with\ respect\ to\ }}W_1,\ {\text{uniformly in each }} \mathbb R^d\times B_R(0).\nonumber
\end{align}
\stepcounter{hyp}

\begin{equation}\label{hyp:L-g_bdbl}
L(x,v,\mu)\ge \theta_1(|v|)-\theta_2(\mu)(|x|+1),\ \  g(x,\mu)\ge -\theta_2(\mu),\ \forall (x,v,\mu)\in\R^{2d}\times\sP_2(\R^d), \tag{H\arabic{hyp}}
\end{equation}
where $\theta_1:[0,+\infty)\to[0,+\infty)$ is a given superlinear function and $\theta_2:\sP_2(\R^d)\to[0,+\infty)$ is a given function which is bounded in $\{\mu\in\sP_2(\R^d): M_2(\mu)\le R \}$, for any $R>0$.
%each set of probability measures with bounded second moment.
\stepcounter{hyp}

\begin{align}\label{hyp:D_xg}
&D_xg\in C^{0,1}(\mathbb R^d\times\sP_2(\mathbb R^d)). \tag{H\arabic{hyp}}
%&\nonumber D_xg(x,\cdot)\ {\rm{is\ Lipschitz\ continuous\ \ with\ respect\ to\ }}W_2,\ {\rm{with\ a\ Lipschitz\ constant\ independent\ of\ }}x.
\end{align}
\stepcounter{hyp}

We suppose that $v\mapsto L(x,v,\mu)$ is strictly convex for all $(x,\mu)\in\R^d\times\sP_2(\R^d)$ and there exists $c_0>0$ such that 
\begin{equation}\label{hyp:H-conv-in-p}
D_{vv}^2L\le \frac{1}{c_0}I_d.\tag{H\arabic{hyp}}
\end{equation}
\stepcounter{hyp}

\begin{align}\label{hyp:DpH}
 D_xL,D_vL \text{ are uniformly Lipschitz continuous in }\R^d\times \R^d\times\sP_2(\R^d),\tag{H\arabic{hyp}}\\
 \left|[D_vL(0,\cdot,\mu)]^{-1} (p)\right|\le C(p),\qquad\qquad\qquad\qquad\qquad\qquad\quad\nonumber
\end{align}
where $C(p)>0$ is independent of $\mu$. %Furthermore, suppose that there exists $C_L>0$ and $\omega:[0,+\infty)\to[0,+\infty)$ continuous increasing with $\omega(0)=0$ such that
%$$
%|D_vL(x,v,\mu_1)-D_vL(x,v,\mu_2)|\le C_L(|x|+|v|+1)\omega(W_1(\mu_1,\mu_2)), \ {\rm{if}}\ W_1(\mu_1,\mu_2)\ll 1.
%$$
\stepcounter{hyp}

%{\color{red}
%\begin{align}\label{hyp:DpH}
% D_xL,D_vL \text{ are uniformly Lipschitz continuous in }\R^d\times B_R(0)\times\sP_2(\R^d), \text{ for any $R>0$,}\tag{H\arabic{hyp}}\\
% \left|[D_vL(0,\cdot,\mu)]^{-1} p\right|\le C(p),\qquad\qquad\qquad\qquad\qquad\qquad\quad\nonumber
%\end{align}
%where $C(p)>0$ is independent of $\mu$. Furthermore, suppose that there exists $C_L>0$ and $\omega:[0,+\infty)\to[0,+\infty)$ continuous increasing with $\omega(0)=0$ such that
%$$
%|D_vL(x,v,\mu_1)-D_vL(x,v,\mu_2)|\le C_L(|x|+|v|+1)\omega(W_1(\mu_1,\mu_2)), \ {\rm{if}}\ W_1(\mu_1,\mu_2)\ll 1.
%$$
%\stepcounter{hyp}
%}
We remark that the strict convexity assumption on $L(x,\cdot,\mu)$ implies that $[D_vL(0,\cdot,\mu)]^{-1}$ exists. The assumption $|[D_vL(0,\cdot,\mu)]^{-1}(p)|\le C(p)$ simply means that this vector field is locally bounded with respect to $p$, independently of the measure variable.
Let us emphasize that in \eqref{hyp:H-L-g-Lip_cont} and in the last part of \eqref{hyp:DpH} the continuity in the measure variable is taken with respect to the $W_1$ metric (rather than the $W_2$ one). The reason behind this is that in our consideration the natural space for the solution to the Fokker--Planck equation in \eqref{eq:mfg} will be $C([0,T]; (\sP_1(\R^d),W_1))$, and so, accordingly, the data in the Hamilton--Jacobi equation need to be continuous with respect to $W_1$. This continuity is in line with the typical assumptions in the literature (cf. \cite{CarPor}).

%We suppose that there exists $c_0>0$ such that 
%\begin{equation}\label{hyp:H-conv-in-p}
%D_{vv}^2L\le \frac{1}{c_0}I_d.\tag{H\arabic{hyp}}
%\end{equation}
%\stepcounter{hyp}

We impose our crucial displacement monotonicity assumptions on the terminal cost $g$ and on  the non-separable Lagrangian function $L$:

\begin{align}\label{hyp:g_mon-2}
\mathbb{E}\left\{[D_x g(X^1,\mu_1)-D_x g(X^2,\mu_2)]\cdot(X^1-X^2)\right\}\ge 0,\tag{H\arabic{hyp}}
\end{align}\stepcounter{hyp}
for any $X^1,X^2\in\mathbb L^2(\mathcal{F}_T)$ with $\mathcal{L}_{X^1}=\mu_1\in\sP_2(\mathbb R^d)$ and $\mathcal{L}_{X^2}=\mu_2\in\sP_2(\mathbb R^d)$. We recall that the notion of displacement monotonicity was proposed in \cite{GanMesMouZha}, and the previous inequality is the same as the one in \cite[Definition 2.2]{GanMesMouZha}.

\begin{align}\label{hyp:L-mon-2}
\nonumber\mathbb{E}\Big\{[D_xL(X^1,Z^1,\mu_1)&- D_xL(X^2,Z^2,\mu_2)]\cdot(X^1-X^2)\Big\}\\
&+\mathbb{E}\Big\{[D_vL(X^1,Z^1,\mu_1)-D_vL(X^2,Z^2,\mu_2)]\cdot(Z^1-Z^2)\Big\}\ge 0,\tag{H\arabic{hyp}}
\end{align}
\stepcounter{hyp}
for any $X^1,X^2,Z^1,Z^2\in\mathbb L^2(\mathcal{F}_T)$ with $\mathcal{L}_{X^1}=\mu_1\in\sP_2(\mathbb R^d)$ and $\mathcal{L}_{X^2}=\mu_2\in\sP_2(\mathbb R^d)$.

\begin{remark}
We recall that $g$ is \emph{Lasry--Lions monotone} if
\begin{align}\label{LLmon}
\mathbb{E}\left\{g(X^1,\mu_1)+ g(X^2,\mu_2)-g(X^1,\mu_2)-g(X^2,\mu_1)\right\}\ge 0,
\end{align}
for any $X^1,X^2\in\mathbb L^2(\mathcal{F}_T)$ with $\mathcal{L}_{X^1}=\mu_1\in\sP_2(\mathbb R^d)$ and $\mathcal{L}_{X^2}=\mu_2\in\sP_2(\mathbb R^d)$. Assume that $g$ is smooth enough in $x$ and $\mu$. Then the inequality \eqref{LLmon} is equivalent to 
\begin{align}\label{LLmonsmooth}
\mathbb{E}\left\{\tilde{\mathbb E}[D^2_{x\mu}g(X,\mu,\tilde X)\delta\tilde X]\cdot \delta X\right\}\ge 0,
\end{align}
for any $X,\delta X\in \mathbb L^2(\mathcal{F}_T)$ with $\mathcal{L}_X=\mu\in\sP_2(\mathbb R^d)$, where $(\tilde X,\delta\tilde X)$ is an independent copy of $(X,\delta X)$ and $\tilde{\mathbb E}$ is the (conditional) expectation corresponding to $(\tilde X,\delta\tilde X)$. Similarly, the fact that $g$ satisfies the displacement monotonicity assumption \eqref{hyp:g_mon-2} is equivalent to
\begin{align}\label{dismonsmooth}
\mathbb{E}\left\{\tilde{\mathbb E}[D^2_{x\mu}g(X,\mu,\tilde X)\delta\tilde X]\cdot \delta X+[D^2_{xx}g(X,\mu)\delta X]\cdot\delta X\right\}\ge 0,
\end{align}
for any $X,\delta X\in \mathbb L^2(\mathcal{F}_T)$ with $\mathcal{L}_X=\mu\in\sP_2(\mathbb R^d)$.

Let us consider $g:\R^d\times\sP_2(\R^d)\to\R$ defined as 
$$g(x,\mu):=C|x|^2+(\phi\star\mu)(x),$$
where $C>0$ and $\phi\in C^2(\mathbb R^d)$ with $-CI_d\leq D^2_{xx}\phi< 0$.
By \eqref{LLmonsmooth} and \eqref{dismonsmooth}, $g$ fails to be Lasry--Lions monotone while $g$ is displacement monotone. This example shows that the displacement monotonicity in general does not imply Lasry--Lions monotonicity. It is also immediate to see that if $g(x,\mu)\equiv g(x)$, then $g$ is trivially Lasry--Lions monotone. But, this function will be displacement monotone only if $g$ is convex. Thus, in general the Lasry--Lions monotonicity does not imply displacement monotonicity either. This example also shows that there are, however, functions which are both Lasry--Lions and displacement monotone in the same time.
%is in general in dichotomy with the Lasry-Lions monotonicity condition.
%Let us consider $g:\R^d\times\sP_2(\R^d)\to\R$ defined as 
%$$g(x,\rho)=\phi(x)+(\phi_1\star\rho)(x),$$
%where $\phi_1\in L^1(\R^d)$, it is bounded with bounded first and second order derivatives. Suppose that $\phi$ is $\l$-convex with $\l>0$ and suppose that $|D^2\phi_1|<\l/2$. Then, according to \cite[Lemma B.3(i)]{GanMes}, $g$ is displacement monotone in the sense of \eqref{hyp:g_mon-2} (as it is the first variation of a displacement convex functional). If we suppose furthermore that $\phi$ is nonnegative and is differentiable with Lipschitz continuous first order derivatives, then $g$ satisfies the assumptions \eqref{hyp:H-L},\eqref{hyp:H-L-g-Lip_cont}, \eqref{hyp:L-g_bdbl} and \eqref{hyp:D_xg}.
%
%If we suppose in addition that $\hat\phi_1$ (the Fournier transform of $\phi_1$) is not everywhere nonnegative, then $g$ fails to be Lasry-Lions monotone (cf. \cite[Lemma B.2, Lemma B.3(ii)]{GanMes}). A simple example for such $\phi_1$ could be constructed as in \cite[Lemma B.3(ii)]{GanMes}.
%
%This example shows that the displacement monotonicity is in general in dichotomy with the Lasry-Lions monotonicity condition.
\end{remark}

\begin{remark}\label{rmk:lift}
It is important to notice that the condition in \eqref{hyp:L-mon-2} can be naturally seen as an extension of \eqref{hyp:g_mon-2} from the state space $\R^d$ to $\R^{d}\times\R^d$. Indeed, consider the map $\tilde L:\mathbb R^d\times\mathbb R^d\times\sP_2(\mathbb R^{d}\times\mathbb R^d)\to\R$ defined by
\[
\tilde L(x,z,\nu)=L(x,z,{\pi_{1}}_\sharp\nu),\,\text{for any $(x,z,\nu)\in\mathbb R^d\times\mathbb R^d\times\sP_2(\mathbb R^{d}\times\mathbb R^d)$}
\]
where ${\pi_{1}}_\sharp\nu$ is the first marginal of $\nu$. For any $X^1,X^2,Z^1,Z^2\in\mathbb L^2(\mathcal{F}_T)$ with $\mathcal{L}_{X^1}=\mu_1$ and $\mathcal{L}_{X^2}=\mu_2$, we consider
\small
\begin{align*}
&\mathbb E\Big\{\big\langle (D_x\tilde L,D_v\tilde L)(X^1,Z^1,\mathcal{L}_{(X^1,Z^1)})-(D_x\tilde L,D_v\tilde L)(X^2,Z^2,\mathcal{L}_{(X^2,Z^2)}),(X^1-X^2,Z^1-Z^2) \big\rangle\Big\}\\
&=\nonumber\mathbb{E}\left\{[D_xL(X^1,Z^1,\mu_1)- D_xL(X^2,Z^2,\mu_2)]\cdot(X^1-X^2)\right\}\\
&+\mathbb{E}\left\{[D_vL(X^1,Z^1,\mu_1)-D_vL(X^2,Z^2,\mu_2)]\cdot(Z^1-Z^2)\right\}.
%&\ge& 0.
\end{align*}\normalsize

\end{remark}

\begin{lemma}\label{conv}
Suppose that \eqref{hyp:D_xg} takes place. Then \eqref{hyp:g_mon-2} implies that $g(\cdot,\mu)$ is convex on $\mathbb R^d$ for all $\mu\in\sP_2(\mathbb R^d)$.
\end{lemma}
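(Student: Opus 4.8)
The plan is to deduce from \eqref{hyp:g_mon-2} that for every $\mu\in\sP_2(\R^d)$ the gradient field $D_xg(\cdot,\mu)$ is monotone, i.e. $[D_xg(x_0,\mu)-D_xg(y_0,\mu)]\cdot(x_0-y_0)\ge 0$ for all $x_0,y_0\in\R^d$; since $g(\cdot,\mu)\in C^{1,1}_{\rm loc}(\R^d)$ by \eqref{hyp:H-L}, monotonicity of the gradient is equivalent to convexity of $t\mapsto g(x_0+t(y_0-x_0),\mu)$ on $[0,1]$ for every $x_0,y_0$, hence to convexity of $g(\cdot,\mu)$, which is the assertion.

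Fix $\mu\in\sP_2(\R^d)$, two points $x_0\neq y_0$ (the diagonal case being trivial), and $\e\in(0,\tfrac12)$. The main idea is to test \eqref{hyp:g_mon-2} not on $\mu$ itself but on the doubly atomic perturbation $\mu_\e:=(1-2\e)\mu+\e\delta_{x_0}+\e\delta_{y_0}\in\sP_2(\R^d)$, against a pair of random variables that differ only by a swap of the two added atoms. Concretely, using that $(\Omega,\mathbb P)$ is atomless and that $X\mapsto\mathcal L_X$ is surjective onto $\sP_2(\R^d)$, I pick $X\in\mathbb L^2(\mathcal F_T)$ with $\mathcal L_X=\mu_\e$; since $\mu_\e(\{x_0\})\ge\e$ and $\mu_\e(\{y_0\})\ge\e$, atomlessness of $\mathbb P$ lets me choose disjoint sets $A_1\subseteq\{X=x_0\}$ and $A_2\subseteq\{X=y_0\}$ with $\mathbb P(A_1)=\mathbb P(A_2)=\e$. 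I then define $X'\in\mathbb L^2(\mathcal F_T)$ by $X'=y_0$ on $A_1$, $X'=x_0$ on $A_2$, and $X'=X$ elsewhere; one checks at once that $\mathcal L_{X'}=\mu_\e$ as well. Applying \eqref{hyp:g_mon-2} with $X^1=X$, $X^2=X'$ and $\mu_1=\mu_2=\mu_\e$, the integrand vanishes outside $A_1\cup A_2$ and equals $[D_xg(x_0,\mu_\e)-D_xg(y_0,\mu_\e)]\cdot(x_0-y_0)$ on both $A_1$ and $A_2$, so the inequality collapses to $2\e\,[D_xg(x_0,\mu_\e)-D_xg(y_0,\mu_\e)]\cdot(x_0-y_0)\ge 0$.

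It remains to let $\e\downarrow 0$. A one-line coupling estimate shows $W_2(\mu_\e,\mu)\to 0$, and then \eqref{hyp:D_xg} (in particular, continuity of $D_xg$ in the measure argument, using $W_1\le W_2$) gives $D_xg(x_0,\mu_\e)\to D_xg(x_0,\mu)$ and $D_xg(y_0,\mu_\e)\to D_xg(y_0,\mu)$. Passing to the limit in the displayed inequality yields $[D_xg(x_0,\mu)-D_xg(y_0,\mu)]\cdot(x_0-y_0)\ge 0$ for all $x_0,y_0\in\R^d$, hence the claimed convexity of $g(\cdot,\mu)$.

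The only mildly technical point is the measure-theoretic bookkeeping in the swap construction: verifying that the sets $A_1,A_2$ exist (the restriction of a non-atomic measure is non-atomic, and a non-atomic finite measure attains every value in its range, so $\{X=x_0\}$ contains a subset of probability exactly $\e$) and that $X'$ really has law $\mu_\e$; everything else is a one-line computation together with the stability input \eqref{hyp:D_xg}. It is worth noting why the atoms are needed at all: the naive alternative of swapping two small balls of equal $\mu$-mass centered at $x_0$ and $y_0$ is unavailable for a general $\mu$ (think of $\mu=\delta_z$), so injecting $\e\delta_{x_0}+\e\delta_{y_0}$, running the swap, and then removing the atoms by continuity in the measure variable is precisely the device that makes the argument work for every $\mu\in\sP_2(\R^d)$.
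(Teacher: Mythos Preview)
Your proof is correct and takes a genuinely different, arguably cleaner route than the paper's. The paper argues by contradiction: it first uses \eqref{hyp:D_xg} to reduce to measures $\mu$ with strictly positive density, then---assuming monotonicity of $D_xg(\cdot,\mu)$ fails at some $x_1,x_2$---finds two disjoint small balls around these points of equal $\mu$-mass on which the strict inequality persists, and constructs a second random variable with the same law by transporting mass between the two balls via Brenier's map and its inverse. Your approach bypasses both the density reduction and the optimal-transport swap: by injecting two atoms $\e\delta_{x_0}+\e\delta_{y_0}$ into $\mu$, the swap becomes a trivial deterministic exchange of two points, and continuity of $D_xg$ in $\mu$ from \eqref{hyp:D_xg} lets you remove the atoms at the end. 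This buys a more elementary, direct argument (no Brenier map, no contradiction) that works uniformly for every $\mu\in\sP_2(\R^d)$. One cosmetic remark: \eqref{hyp:D_xg} already gives $W_2$-Lipschitz continuity of $D_xg$, so the appeal to $W_1\le W_2$ is unnecessary (though harmless); and to carve out $A_1,A_2$ of exact probability $\e$ it is cleanest to take $X\in\mathbb L^2(\cF_0)$, since the paper's atomlessness hypothesis is stated for $\cF_0$.
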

\begin{proof}
By assumption \eqref{hyp:D_xg}, it is enough to show the convexity for any $\mu\in\sP_2(\mathbb R^d)$ that has positive density $\rho$. We further notice that the convexity of $g(\cdot,\mu)$ is equivalent to the monotonicity of $D_xg(\cdot,\mu)$.

Let us suppose the contrary, i.e. that there are two different points $x_1,x_2\in\mathbb R^d$ and $\mu_0\in\sP_2(\mathbb R^d)$ with positive density $\rho_0$ such that
\[
\langle D_{x}g(x_1,\mu_0)-D_xg(x_2,\mu_0),x_1-x_2\rangle<0.
\]
Since \eqref{hyp:D_xg} holds and $\rho_0$ is positive, there exist small $\delta_1,\delta_2>0$ such that $\mu_0(B_{\delta_1}(x_1))=\mu_0(B_{\delta_2}(x_2))$, $B_{\delta_1}(x_1)\cap B_{\delta_2}(x_2)=\emptyset$ and
for any $x\in B_{\delta_1}(x_1)$ and $y\in B_{\delta_2}(x_2)$ such that  we have
\[
\langle D_{x}g(x,\mu_0)-D_xg(y,\mu_0),x-y\rangle<0.
\]
Now, let $\xi_1\in\mathbb L^2(\mathcal{F}_T)$ such that $\mathcal{L}_{\xi_1}=\mu_0.$ Consider a transport map $T:B_{\delta_1}(x_1)\to B_{\delta_2}(x_2)$ between the measures $\mu_0\mres B_{\delta_1}(x_1)$ and $\mu_0\mres B_{\delta_2}(x_2)$ (one can simply take Brenier's map for instance). Define $A_i:=\xi_1^{-1}(B_{\delta_i}(x_i))$, $i=1,2$. We notice that $A_1,A_2\in\mathcal{F}_T$, $\mathbb{P}(A_1)=\mathbb{P}(A_2)$ and $\mathbb{P}(A_1\cap A_2)=0$. Then we consider $\xi_2\in\mathbb L^2(\mathcal{F}_T)$ defined as follows:
$$
\xi_2(\omega)=\left\{
\begin{array}{ll}
\xi_1(\omega), & \omega\notin (A_1\cup A_2),\\
(T\circ \xi_1)(\omega), & \omega\in A_1,\\
(T^{-1}\circ \xi_1)(\omega), & \omega\in A_2.
\end{array}
\right.
$$
We readily check that $\cL_{\xi_2}=\mu_0.$ Then, by construction, we find
%Then it is easy to construct $\xi_1,\xi_2\in\mathbb L^2(\mathcal{F}_T)$ such that $\mathcal{L}_{\xi_1}=\mathcal{L}_{\xi_2}=\mu_0$, $\xi_1(\omega)=\xi_2(\omega)$ if $\xi_1(\omega)\not\in B_{\delta_1}(x_1)\cup B_{\delta_2}(x_2)$, $\xi_2(\omega)\in B_{\delta_1}(x_1)$ if $\xi_1(\omega)\not\in B_{\delta_1}(x_1)$ and $\xi_2(\omega)\in B_{\delta_2}(x_2)$ if $\xi_1(\omega)\not\in B_{\delta_2}(x_2)$. 
\[
\mathbb E\Big[ \langle D_x g(\xi_1, \mathcal{L}_{\xi_1}) - D_x g(\xi_2, \mathcal{L}_{\xi_2}), \xi_1-\xi_2\rangle\Big] < 0,
\]
which contradicts with the displacement monotonicity of $g$. The result follows.
\end{proof}
\begin{remark}
The result of Lemma \ref{conv} can be seen as a slight improvement of \cite[Lemma 2.6]{GanMesMouZha} in weakening the regularity assumptions on the data.
\end{remark}

\begin{lemma}\label{conv:2}
Suppose that \eqref{hyp:DpH} takes place. Then \eqref{hyp:L-mon-2} implies that $L(\cdot,\cdot,\mu)$ is convex on $\mathbb R^d\times\mathbb R^d$ for all $\mu\in\sP_2(\mathbb R^d)$.
\end{lemma}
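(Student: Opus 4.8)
The plan is to reproduce the argument of Lemma~\ref{conv}, but on the doubled state space $\mathbb R^{2d}$ and applied to the lifted Lagrangian $\tilde L$ introduced in Remark~\ref{rmk:lift}: the identity recorded there shows that \eqref{hyp:L-mon-2} is \emph{exactly} the displacement monotonicity of $\tilde L$, the only difference with \eqref{hyp:g_mon-2} being that the measure argument of $\tilde L$ is constrained only through its first marginal — which, as we will see, causes no trouble. Since by \eqref{hyp:H-L} the map $L(\cdot,\cdot,\mu)$ is $C^1$ on $\mathbb R^{2d}$, its convexity is equivalent to the monotonicity of the gradient field $G_\mu:=(D_xL,D_vL)(\cdot,\cdot,\mu):\mathbb R^{2d}\to\mathbb R^{2d}$, so it suffices to prove $\langle G_\mu(a)-G_\mu(b),a-b\rangle\ge 0$ for all $a,b\in\mathbb R^{2d}$ and all $\mu\in\sP_2(\mathbb R^d)$.

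First I would reduce to the case of $\mu$ with a strictly positive density. Indeed, by \eqref{hyp:DpH} the field $G_\mu$ depends continuously on $\mu$ (locally uniformly in the state variables), and measures with strictly positive density are dense in $\sP_2(\mathbb R^d)$ along sequences with uniformly bounded second moments; hence the monotonicity of $G_\mu$, being a closed condition, passes to the limit. So fix $\mu_0\in\sP_2(\mathbb R^d)$ with positive density and put $\nu_0:=\mu_0\otimes\gamma$, where $\gamma$ is a nondegenerate Gaussian on $\mathbb R^d$. Then $\nu_0\in\sP_2(\mathbb R^{2d})$ has a strictly positive density and ${\pi_1}_\sharp\nu_0=\mu_0$, so that $(D_x\tilde L,D_v\tilde L)(\cdot,\cdot,\nu_0)=G_{\mu_0}$.

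Arguing by contradiction, suppose there are $a\neq b$ in $\mathbb R^{2d}$ with $\langle G_{\mu_0}(a)-G_{\mu_0}(b),a-b\rangle<0$. Exactly as in the proof of Lemma~\ref{conv}, using the continuity of $G_{\mu_0}$ and the positivity of the density of $\nu_0$, I would choose radii $\delta_1,\delta_2>0$ such that $B_{\delta_1}(a)\cap B_{\delta_2}(b)=\emptyset$, $\nu_0(B_{\delta_1}(a))=\nu_0(B_{\delta_2}(b))$, and $\langle G_{\mu_0}(p)-G_{\mu_0}(q),p-q\rangle<0$ for all $p\in B_{\delta_1}(a)$, $q\in B_{\delta_2}(b)$. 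Take $\Xi_1\in\mathbb L^2(\mathcal F_T)$ valued in $\mathbb R^{2d}$ with $\mathcal L_{\Xi_1}=\nu_0$, let $\mathbf T:B_{\delta_1}(a)\to B_{\delta_2}(b)$ be a transport map (e.g. Brenier's) pushing $\nu_0\mres B_{\delta_1}(a)$ onto $\nu_0\mres B_{\delta_2}(b)$, set $A_1:=\Xi_1^{-1}(B_{\delta_1}(a))$, $A_2:=\Xi_1^{-1}(B_{\delta_2}(b))$, and define $\Xi_2:=\Xi_1$ off $A_1\cup A_2$, $\Xi_2:=\mathbf T\circ\Xi_1$ on $A_1$, and $\Xi_2:=\mathbf T^{-1}\circ\Xi_1$ on $A_2$. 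Then $\mathcal L_{\Xi_2}=\nu_0$; writing $\Xi_i=(X^i,Z^i)$ this gives $\mathcal L_{X^1}=\mathcal L_{X^2}=\mu_0$, so \eqref{hyp:L-mon-2} is applicable with $\mu_1=\mu_2=\mu_0$. By the computation in Remark~\ref{rmk:lift}, the left-hand side of \eqref{hyp:L-mon-2} equals $\mathbb E\big[\langle G_{\mu_0}(\Xi_1)-G_{\mu_0}(\Xi_2),\Xi_1-\Xi_2\rangle\big]$, which by construction is the integral over $A_1\cup A_2$ (a set of positive probability) of strictly negative quantities, hence is strictly negative — contradicting \eqref{hyp:L-mon-2}. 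Therefore $G_{\mu_0}$ is monotone, i.e. $L(\cdot,\cdot,\mu_0)$ is convex, and the reduction step completes the proof.

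The one point deserving care is the book-keeping in the lift: since \eqref{hyp:L-mon-2} leaves $Z^1,Z^2$ unconstrained, one must perform the swap on the \emph{full} pair $(X^i,Z^i)$ along a common equal-mass partition of $\Omega$, rather than only on the $X$-component; taking $\nu_0=\mu_0\otimes\gamma$ and transporting in $\mathbb R^{2d}$ achieves exactly this while keeping the first marginal equal to $\mu_0$. Apart from this, the argument is a verbatim repetition of Lemma~\ref{conv} with $d$ replaced by $2d$ and $g$ replaced by $\tilde L$.
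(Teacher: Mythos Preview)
Your proof is correct and follows exactly the paper's route: lift $L$ to $\tilde L$ on $\mathbb R^{2d}$ via Remark~\ref{rmk:lift} and then run Lemma~\ref{conv}. The paper simply invokes Lemma~\ref{conv} as a black box for $\tilde L$, whereas you reproduce its argument in full detail (with the convenient choice $\nu_0=\mu_0\otimes\gamma$ to secure a positive density on $\mathbb R^{2d}$), but there is no substantive difference in approach.
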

\begin{proof}
%We first define a map $\tilde L:\mathbb R^d\times\mathbb R^d\times\sP_2(\mathbb R^{d}\times\mathbb R^d)\to\R$ by
%\[
%\tilde L(x,z,\nu)=L(x,z,{\pi_{1}}_\sharp\nu),\,\text{for any $(x,z,\nu)\in\mathbb R^d\times\mathbb R^d\times\sP_2(\mathbb R^{d}\times\mathbb R^d)$}
%\]
%where ${\pi_{1}}_\sharp\nu$ is the first marginal of $\nu$. For any $X^1,X^2,Z^1,Z^2\in\mathbb L^2(\mathcal{F}_T)$ with $\mathcal{L}_{X^1}=\mu_1$ and $\mathcal{L}_{X^2}=\mu_2$, we consider
%\small
%\begin{eqnarray*}
%&&\mathbb E\Big\{\big\langle (D_x\tilde L,D_v\tilde L)(X^1,Z^1,\mathcal{L}_{(X^1,Z^1)})-(D_x\tilde L,D_v\tilde L)(X^2,Z^2,\mathcal{L}_{(X^2,Z^2)}),(X^1-X^2,Z^1-Z^2) \big\rangle\Big\}\\
%&=&\nonumber\mathbb{E}\left\{[D_xL(X^1,Z^1,\mu_1)- D_xL(X^2,Z^2,\mu_2)]\cdot(X^1-X^2)\right\}\\
%&&+\mathbb{E}\left\{[D_vL(X^1,Z^1,\mu_1)-D_vL(X^2,Z^2,\mu_2)]\cdot(Z^1-Z^2)\right\}\\
%&\ge& 0.
%\end{eqnarray*}\normalsize
By the observation made in Remark \ref{rmk:lift}, we can apply Lemma \ref{conv}, to obtain that $\tilde L(\cdot,\cdot,\nu)$ is convex on $\mathbb R^d\times\mathbb R^d$ for all $\nu\in\sP_2(\mathbb R^d\times\mathbb R^d)$. This implies that $L(\cdot,\cdot,\mu)$ is convex in $\mathbb R^d\times\mathbb R^d$ for all $\mu\in\sP_2(\mathbb R^d)$.
\end{proof}

Let $H:\M\times\R^d\times \sP_2(\M)\to\R$ be the Hamiltonian function such that $L^*(x,\cdot,\mu)=H(x,\cdot,\mu)$ for all $x\in\M$ and $\mu\in\sP_2(\R^d)$ (i.e. $H$ is the Legendre-Fenchel transform of $L$ in its second variable).

\begin{remark}\label{Hassum}
%Under the assumption \eqref{hyp:L-g_bdbl}, 
Standard convex analysis theory ensures that the assumptions \eqref{hyp:H-L}, \eqref{hyp:H-L-g-Lip_cont},  \eqref{hyp:DpH}, \eqref{hyp:H-conv-in-p}, \eqref{hyp:L-mon-2} for the Lagrangian function $L$ are equivalent to the following assumptions on the corresponding Hamiltonian $H$, respectively
\begin{equation}\label{C11xp}
H(\cdot,\cdot,\mu)\in C_{\rm loc}^{1,1}(\mathbb R^d\times\mathbb R^d), {\rm{uniformly\ in\ }} \mu,
\end{equation}
\begin{equation}\label{C0muW1}
%\text{for any } R>0,\ 
H(x,p,\cdot)\ {\text{is continuous in $\mu$ with\ respect\ to\ }}W_1, \ {\text{locally uniformly in }}\R^{d}\times \R^d %{\text{uniformly in each }}\R^{d}\times B_R(0),
\end{equation}

\begin{eqnarray}\label{HLip}
\left.\begin{array}{c}
D_xH,D_pH \text{ are uniformly Lipschitz continuous in }\R^d\times \R^d\times\sP_2(\R^d),\\
 |D_pH(0,p,\mu)|\le C(p)\ {\rm{and}}\ D^2_{pp}H\geq c_0I_d,
 \end{array}\right.
\end{eqnarray}
%{\color{red}\begin{eqnarray}\label{HLip}
%\left.\begin{array}{c}
%\text{for any } R>0,\, D_xH,D_pH \text{ are uniformly Lipschitz in each }\R^d\times B_R(0)\times\sP_2(\R^d),\\
% |D_pH(0,p,\mu)|\le C(p)\ {\rm{and}}\ D^2_{pp}H\geq c_0I_d,
% \end{array}\right.
%\end{eqnarray}}
%there exists $C_H>0$ and $\omega:[0,+\infty)\to[0,+\infty)$ continuous increasing with $\omega(0)=0$ such that
%\begin{equation}\label{hyp:DpH_W1}
%|D_pH(x,p,\mu)-D_pH(x,p,\nu)|\le C_H(|x|+|p|+1)\omega(W_1(\mu,\nu)), \ {\rm{if}}\ W_1(\mu,\nu)\ll 1,
%\end{equation}
and moreover, for any $X^1,X^2,P^1,P^2\in\mathbb L^2(\mathcal{F}_T)$ with $\mathcal{L}_{X^1}=\mu_1\in\sP_2(\mathbb R^d)$ and $\mathcal{L}_{X^2}=\mu_2\in\sP_2(\mathbb R^d)$,
\begin{align}\label{Hdis}
\mathbb{E}\Big\{\big(-D_xH(X^1,P^1,\mu_1)&+D_xH(X^2,P^2,\mu_2)\big)\cdot (X^1-X^2)\Big\}\\
&\nonumber+\mathbb{E}\Big\{\left(D_pH(X^1,P^1,\mu^1)-D_pH(X^2,P^2,\mu^2)\right)\cdot\left(P^1-P^2\right)\Big\}\geq 0,
\end{align}
where $C(p)>0$ is independent of $\mu$. We note that \eqref{Hdis} implies $H$ is convex in its second variable.

The only result which might not be straightforward, is the equivalence between \eqref{hyp:L-mon-2} and \eqref{Hdis}, so let us sketch its proof. Notice that we have the Legendre-Fenchel inequality: for all $x\in\M$, $\mu\in\sP_2(\R^d)$ and $p,v\in\R^d$ we have $H(x,p,\mu)+L(x,v,\mu)\ge p\cdot v.$ It is well-known that we have the equality if and only if $v=D_pH(x,p,\mu)$ or $p=D_v L(x,v,\mu)$. As a consequence $D_vL(x,\cdot,\mu)=[D_pH(x,\cdot,\mu)]^{-1}$. Furthermore
$$D_xH(x,p,\mu)=-D_x L(x,D_p H(x,p,\mu),\mu).$$
Supposing that \eqref{hyp:L-mon-2} takes place, fix $X^1,X^2,P^1,P^2\in\mathbb L^2(\mathcal{F}_T)$ with $\mathcal{L}_{X^1}=\mu_1\in\sP_2(\mathbb R^d)$ and $\mathcal{L}_{X^2}=\mu_2\in\sP_2(\mathbb R^d)$. Then, by setting $Z^i:=D_pH(X^i,P^i,\mu_i)$, $i=1,2$, and noticing that by the Lipschitz continuity assumption on $D_pH$, $Z^i\in \mathbb L^2(\mathcal{F}_T)$, we obtain \eqref{Hdis}, by using \eqref{hyp:L-mon-2} for $X^1,X^2,Z^1,Z^2$. The converse implication can be checked similarly.

\end{remark}

In what follows we show that the displacement monotonicity assumption \eqref{Hdis} imposed on $H$ (and hence the condition \eqref{hyp:L-mon-2} imposed on $L$) is implied by the corresponding displacement monotonicity assumption, proposed in \cite[Definition 3.4]{GanMesMouZha}. Therefore, our standing assumptions in this manuscript are in general weaker than the ones from \cite{GanMesMouZha}.

\begin{lemma}\label{lem:example}
Assume that $H\in \cC^2(\mathbb R^d\times\mathbb R^d\times\sP_2(\mathbb R^d))$, $D_{pp}H\geq c_0 I_d$ for some $c_0>0$, and assume further that $D^2_{x\mu}H,D^2_{xx}H, D^2_{pp}H, D^2_{p\mu}H$ are uniformly bounded. Then \eqref{Hdis} holds if, for any $X,\delta X,P\in\mathbb L^2(\mathcal{F}_T)$ with $\mathcal{L}_{X}=\mu\in\sP_2(\mathbb R^d)$,
\begin{align}\label{Hdisdiff}
\mathbb{E}\Big[\langle\tilde {\mathbb E}\big[D^2_{x\mu}H(X,P,\mu,\tilde X)\delta \tilde X\big],\delta X\rangle&+\langle D^2_{xx}H(X,P,\mu)\delta X,\delta X\rangle\Big]\\
&\nonumber\leq-\frac{1}{4}\mathbb E\left[\left|[D^2_{pp}H(X,P,\mu)]^{-\frac{1}{2}}\tilde{\mathbb E}\left[D^2_{p\mu}H(X,P,\mu,\tilde X)\delta \tilde X\right]\right|^2\right].\nonumber
\end{align}
where $\tilde {\mathbb E}$ is a (conditional) expectation corresponding to $\tilde X$ and $\delta \tilde X$.
\end{lemma}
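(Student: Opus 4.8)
The plan is to reduce the displacement monotonicity inequality \eqref{Hdis} to its infinitesimal version. Since $H\in\cC^2$, the left-hand side of \eqref{Hdis} is a difference of gradients paired with the corresponding increments, so it is natural to interpolate linearly between the two configurations. Concretely, fix $X^1,X^2,P^1,P^2$ with $\cL_{X^1}=\mu_1$, $\cL_{X^2}=\mu_2$, set $X^r:=rX^1+(1-r)X^2$, $P^r:=rP^1+(1-r)P^2$, $\mu_r:=\cL_{X^r}$, $\delta X:=X^1-X^2$, $\delta P:=P^1-P^2$, and write the left-hand side of \eqref{Hdis} as $\int_0^1 \frac{\dd}{\dd r}\Phi(r)\,\dd r$ where $\Phi(r):=\mathbb E\big[-D_xH(X^r,P^r,\mu_r)\cdot\delta X + D_pH(X^r,P^r,\mu_r)\cdot\delta P\big]$. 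Differentiating in $r$ and using the chain rule for the Wasserstein derivative (the $\mu_r$ dependence contributes a term involving $D^2_{x\mu}H$ and $D^2_{p\mu}H$ evaluated at an independent copy $\tilde X^r$ with increment $\delta\tilde X$) produces, after collecting terms, an integrand of the form
\begin{align*}
\mathbb E\Big[-\langle\tilde{\mathbb E}[D^2_{x\mu}H\,\delta\tilde X],\delta X\rangle &- \langle D^2_{xx}H\,\delta X,\delta X\rangle - \langle D^2_{xp}H\,\delta P,\delta X\rangle\\
&+\langle D^2_{px}H\,\delta X,\delta P\rangle + \langle D^2_{pp}H\,\delta P,\delta P\rangle + \langle\tilde{\mathbb E}[D^2_{p\mu}H\,\delta\tilde X],\delta P\rangle\Big],
\end{align*}
where all second derivatives are evaluated at $(X^r,P^r,\mu_r,\tilde X^r)$. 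The cross terms $\langle D^2_{xp}H\,\delta P,\delta X\rangle$ and $\langle D^2_{px}H\,\delta X,\delta P\rangle$ cancel by symmetry of the Hessian in $(x,p)$.

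The remaining integrand still contains $\delta P$, which is unconstrained, so I would now minimize over $\delta P$ pointwise. Writing $a:=D^2_{pp}H(X^r,P^r,\mu_r)$ (which is $\geq c_0 I_d$, hence invertible) and $b:=\tilde{\mathbb E}[D^2_{p\mu}H\,\delta\tilde X]$, the $\delta P$-dependent part is $\mathbb E[\langle a\,\delta P,\delta P\rangle + \langle b,\delta P\rangle]$; minimizing the integrand freely over $\delta P$ gives the value $-\tfrac14\mathbb E[\langle a^{-1}b,b\rangle] = -\tfrac14\mathbb E[|a^{-1/2}b|^2]$. Thus the integrand is bounded below by
\[
-\mathbb E\Big[\langle\tilde{\mathbb E}[D^2_{x\mu}H\,\delta\tilde X],\delta X\rangle + \langle D^2_{xx}H\,\delta X,\delta X\rangle\Big] - \tfrac14\mathbb E\big[\big|[D^2_{pp}H]^{-1/2}\tilde{\mathbb E}[D^2_{p\mu}H\,\delta\tilde X]\big|^2\big],
\]
and by hypothesis \eqref{Hdisdiff} (applied with $X=X^r$, $P=P^r$, $\mu=\mu_r$, $\delta X = X^1-X^2$) this quantity is $\geq 0$ for every $r\in[0,1]$. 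Integrating over $r$ yields \eqref{Hdis}.

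The main obstacle I anticipate is the rigorous justification of differentiating $\Phi(r)$ in $r$, in particular the term coming from $r\mapsto\mu_r=\cL_{X^r}$: one must verify that $r\mapsto\mu_r$ is differentiable in $\sP_2$ along the linear interpolation with tangent vector $\delta X$ and that the chain rule $\frac{\dd}{\dd r}U(\mu_r) = \mathbb E[\langle D_\mu U(\mu_r,X^r),\delta X\rangle]$ applies, which is where the uniform boundedness of $D^2_{x\mu}H$ and $D^2_{p\mu}H$ is used (to get dominated convergence and to make sense of $\tilde{\mathbb E}[D^2_{x\mu}H(X^r,P^r,\mu_r,\tilde X^r)\delta\tilde X]$ as the relevant second-order term). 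A secondary technical point is that $\delta P$ is a fixed random variable, not a free parameter, so the "pointwise minimization over $\delta P$" must be read as: the integrand, viewed as a function of the random vector $\delta P(\omega)$ at each $\omega$ (with $a,b$ frozen), is bounded below by its unconstrained minimum; this is a legitimate pointwise bound that survives taking expectations and then integrating in $r$. Everything else — symmetry cancellation of the mixed Hessian, invertibility of $D^2_{pp}H$, completing the square — is routine.
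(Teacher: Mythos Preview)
Your proposal is correct and follows essentially the same route as the paper: linear interpolation between $(X^1,P^1)$ and $(X^2,P^2)$, chain-rule differentiation producing the second-order terms (including the $D^2_{x\mu}$ and $D^2_{p\mu}$ contributions via an independent copy), cancellation of the mixed $D^2_{xp}$/$D^2_{px}$ terms, and then completion of the square in $\delta P$---which you phrase as a pointwise minimization but is exactly the same algebraic step. The paper writes the completed square $|D^2_{pp}H^{1/2}\bar P+\tfrac12 D^2_{pp}H^{-1/2}\tilde{\mathbb E}[D^2_{p\mu}H\,\tilde{\bar X}]|^2\ge 0$ explicitly and drops it, rather than framing it as a minimum; your caveats about the Wasserstein chain rule and dominated convergence are the technical points the paper takes for granted under the stated boundedness assumptions.
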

\begin{proof}
Let $\bar X:=X_1-X_2$, $\bar{ P}:=P_1-P_2$, $X_{\theta}:=X_2+\theta \bar X$ and $P_{\theta}:=P_2+\theta \bar P$. Then
\begin{eqnarray*}
&&\mathbb E\Big[\langle D_xH(X_1,P_1,\mu_1)-D_xH(X_2,P_2,\mu_2), \bar X\rangle\\
&&-\langle D_pH(X_1,P_1,\mu_1)-D_pH(X_2,P_2,\mu_2),\bar P\rangle\Big]\\
&=&\mathbb E\Big[\int_0^1\langle D^2_{xx}H(X_\theta, P_{\theta}, \mathcal{L}_{X_\theta})\bar X+\tilde{\mathbb E}[D^2_{x\mu}H(X_\theta,P_\theta,\mathcal{L}_{X_{\theta}},\tilde X_\theta)\tilde{\bar X}]+D^2_{xp}H(X_\theta,P_\theta,\mathcal{L}_{X_\theta})\bar P,\bar X\rangle\\
&&-\langle D^2_{px}H(X_\theta,P_\theta,\mathcal{L}_{X_\theta})\bar X+\tilde {\mathbb E}[D^2_{p\mu}H(X_\theta,P_{\theta},\mathcal{L}_{X_\theta},\tilde X_\theta)\tilde{\bar X}]+D^2_{pp}H(X_\theta,P_\theta,\mathcal{L}_{X_\theta})\bar P,\bar P\rangle d\theta\Big]\\
&=&\mathbb E\Big[\int_0^1\langle D^2_{xx}H(X_\theta,P_{\theta},\mathcal{L}_{X_\theta})\bar X+\tilde {\mathbb E}[D^2_{x\mu}H(X_\theta,P_\theta,\mathcal{L}_{X_\theta},\tilde X_\theta)\tilde{\bar X}],\bar X\rangle\\
&&+\frac{1}{4}\left|[D^2_{pp}H(X_\theta,P_\theta,\mathcal{L}_{X_\theta})]^{-\frac{1}{2}}\tilde {\mathbb E}[D^2_{p\mu}H(X_\theta,P_\theta,\mathcal{L}_{X_\theta},\tilde X_\theta)\tilde{\bar X}]\right|^2\\
&&-\left|D^2_{pp}H(X_\theta,P_\theta,\mathcal{L}_{X_\theta})^{\frac{1}{2}}\bar P+\frac{1}{2}D^2_{pp}H(X_\theta,P_\theta,\mathcal{L}_{X_\theta})^{-\frac{1}{2}}\tilde{\mathbb E}[D^2_{p\mu}H(X_\theta,P_\theta,\mathcal{L}_{\theta},\tilde X_\theta)\tilde{\bar X}]\right|^2d\theta\Big]\leq 0.
\end{eqnarray*}
\end{proof}
\begin{remark}
\begin{enumerate}
\item A quite general class of Hamiltonians constructed in \cite[Lemma 3.8]{GanMesMouZha} satisfies all the assumptions in Lemma \ref{lem:example} including \eqref{Hdisdiff}. Then \eqref{Hdis} holds by Lemma \ref{lem:example}. Moreover, it can be easily verified that the class also satisfies \eqref{C11xp}-\eqref{Hdis}. Therefore, the corresponding class of Lagrangians satisfies \eqref{hyp:H-L}-\eqref{hyp:L-g_bdbl}, \eqref{hyp:DpH}, \eqref{hyp:H-conv-in-p}, \eqref{hyp:L-mon-2}.\\ 
\item More particularly, the following model Hamiltonians satisfy our assumptions. Let $H_0:\R^d\times\R^d\times\sP_2(\R^d)\to\R$ be of class $C^2$ in the first two variables such that there exists $C_0>0$ with the property 
$$
|\partial_x^\a\partial_p^\beta H_0(x,p,\mu)|< C_0,\ {\rm{for\ all\ }}(x,p,\mu)\in\R^d\times\R^d\times\sP_2(\R^d),
$$ 
and for all $\alpha,\beta\in (\N\cup \{0\})^d$ multi-indices with $0\le |\a|+|\b|\le 2$. Furthermore, assume that $H_0(x,p,\cdot)$ is continuous with respect to $W_1$ and $D_p H_0$ is Lipschitz continuous in the last variable with respect to $W_1$, uniformly with respect to $(x,p)$. Then we define $H:\R^d\times\R^d\times\sP_2(\R^d)\to\R$ as 
$$
H(x,p,\mu):= H_0(x,p,\mu) +\frac{C_0}{2}(|p|^2-|x|^2).
$$

\item It is not hard to see that if the Hamiltonian is sufficiently regular and separable (i.e. $D^2_{p\mu} H = 0$), the condition \eqref{Hdisdiff} is equivalent to \eqref{Hdis}. Indeed, we can see this from the last line of the proof of Lemma \ref{lem:example}, by choosing $\bar P=0$. \\
\item If the Hamiltonian is non-separable (i.e. $D^2_{p\mu} H \neq 0$), then the monotonicity condition \eqref{Hdis} in general does not imply \eqref{Hdisdiff}, and so the former one is weaker than the latter one. Indeed, looking again at the last line of the proof of Lemma \ref{lem:example}, we see that this in general does not vanish and it gives a negative contribution.
\end{enumerate}
\end{remark}

\section{Existence of a solution when $\beta=0$}\label{sec:existence}

In this section we provide the result on the existence of a classical solution to the MFG system \eqref{eq:mfg} when $\beta=0$. We shall emphasize that the convexity properties of $x\mapsto g(x,\mu)$ and $(x,v)\mapsto L(x,v,\mu)$ for all $\mu\in\sP_2(\R^d)$ (implied by the displacement monotonicity assumptions, cf. Lemmas \ref{conv} and \ref{conv:2}) play an important role in showing such a result and this seems to be new in the literature. 

The study of fully convex control problems received a great attention in the past, and this goes back to the works by Rockafellar in the 1970s (cf. \cite{Roc:70-1,Roc:70-2,Roc:71}). In these works, for deterministic optimal control problems, powerful duality techniques were developed and the author could handle even non-smooth (but convex) data. Later, in this fully convex setting, many other results followed (cf. \cite{RocWol:00-1,RocWol:00-2,GoeRoc,Goe:05-2}). In particular, in \cite{Goe:05-1} (see also \cite{BarEva} for special Hamiltonians) it was proven that in the case of fully convex control problem involving data of class $C^{1,1}$, the associated value function (which is convex in the position variable) is also of class $C^{1,1}$ in the position variable.

This regularity on the value function will also hold in our context, which will in turn imply that the drift for the continuity equation will also be Lipschitz continuous in the position variable. By this (using the regularity property of $H$ in the measure variable), we can build a suitable fixed point scheme that would yield the existence of a solution to \eqref{eq:mfg} when $\beta=0$. So, in fact the monotonicity conditions \eqref{hyp:g_mon-2}  and \eqref{hyp:L-mon-2} are not used explicitly in this section.
 
Furthermore, for the results of this section, one can slightly weaken the first part of assumptions from \eqref{hyp:DpH} (or equivalently the first part of the one in \eqref{HLip}). In particular, for the existence of a solution to \eqref{eq:mfg} when $\beta=0$, we do not need to impose Lipschitz continuity assumptions on $D_xL$ (or on $D_xH$). Inspired by the assumptions from \cite[Theorem 3.3]{GanSwi:14-particles}, we impose the following condition on $D_pH$. % for vector fields $V:\R^d\times\sP_2(\R^d)\to\R^d$.

Let $b:\R^d\to\R^d$ be a Lipschitz continuous vector field such that there exists $C_b>0$ with $|b(x)|\le C_b(1+|x|)$ for all $x\in\R^d$. Define $V_b:\R^d\times\sP_2(\R^d)\to\R^d$ by $V_b:=D_pH(x,b(x),\mu)$. 
Suppose that for all $R>0$, there exists $\omega_R:[0,+\infty)\to[0,+\infty)$, a modulus of continuity, with  $\omega_R(s)>\omega_R(0)=0$ for all $s>0$ and $\int_0^{s_0}\frac{\dd s}{\omega_R(s)}=+\infty$ for some $s_0>0$ such that for any $b$ with the above mentioned properties we have
\begin{equation}\label{D_pHLip-alternative}
\ds\iint_{\R^d\times\R^d} [V_b(x,\mu^1)-V_b(x,\mu^2)]\cdot (x-y)\dd\gamma(x,y)\le \omega_R(W_2(\mu^1,\mu^2))W_2(\mu^1,\mu^2),
 \tag{H6'}
\end{equation}
for any $\mu^1,\mu^2\in\left\{\mu\in\sP_2(\R^d):\ M_2(\mu)\le R\right\}$ and any $\gamma\in\Pi_o(\mu^1,\mu^2)$ (here $\Pi_o(\mu^1,\mu^2)$ stands for the set of optimal plans realizing $W_2(\mu^1,\mu^2)$). Suppose also that there exists $C_H>0$ and $\omega:[0,+\infty)\to[0,+\infty)$ continuous increasing with $\omega(0)=0$ such that
\begin{eqnarray}\label{D_pH-W_1}
\begin{array}{r}
|D_pH(x,p,\mu_1)-D_pH(x,p,\mu_2)|\le C_H(|x|+|p|+1)\omega(W_1(\mu_1,\mu_2)), \ {\rm{if}}\ W_1(\mu_1,\mu_2)\ll 1,\\
{\rm{and}}\\
|D_pH(x_1,p_1,\mu)-D_pH(x_2,p_2,\mu)|\le C_H(|x_1-x_2|+|p_1-p_2|),\ \ \forall (x_1,x_2,p_1,p_2)\in\R^d,\ \mu\in\sP_2(\R^d).
\end{array}
\end{eqnarray}
We suppose furthermore that the second part of \eqref{HLip} takes place, i.e.
\begin{equation}\label{D_pH-bound}
 |D_pH(0,p,\mu)|\le C(p)\ {\rm{and}}\ D^2_{pp}H\geq c_0I_d.\tag{H6''}
\end{equation}

\begin{remark}
We would like to emphasize that \eqref{D_pHLip-alternative} and \eqref{D_pH-W_1} are a relaxation of \eqref{HLip}. Indeed, \eqref{D_pHLip-alternative} and \eqref{D_pH-W_1} would allow us to consider $H(x,p,\mu):=a(\mu)\frac{|p|^2}{2}$, where we suppose that there exists $C_0>0$ such that $\frac{1}{C_0}\le a(\mu)\le C_0$, for all $\mu\in\sP_s(\R^d)$ and $a:\sP_2(\R^d)\to\R$ is Lipschitz continuous with respect to $W_1$. This Hamiltonian satisfies also the other assumptions imposed in this section, but it clearly does not satisfy the first part of \eqref{HLip}. However, this Hamiltonian is not displacement monotone, so the results of the next section do not apply for this example.
\end{remark}

There are some existence results to \eqref{eq:mfg} already in the literature when $\beta=0$, however these are for weak solutions, without any monotonicity assumption, see \cite{Cardaliaguet,CarPor,CarSou}.  When $\beta^2>0$, the non-degeneracy gives enough compactness for the existence of classical solutions to the MFG system \eqref{eq:mfg} without any monotonicity assumptions, see e.g. for analytical arguments (cf. \cite{Cardaliaguet, CarDelLasLio, CarPor}) and for probabilistic arguments (cf. \cite{ChaCriDel}). Therefore, we shall only focus on the case $\beta=0$ regarding the existence result.

Before we show the existence result, let us define in which sense do we understand a pair $(u,\rho)$ to be a solution to the mean field game system \eqref{eq:mfg}.
\begin{definition}
We say that $(u,\rho)$ is a solution pair to the mean field game system \eqref{eq:mfg} if \\
(i) $u$ is locally Lipschitz continuous, solves the Hamilton-Jacobi-Bellman equation in \eqref{eq:mfg} in the viscosity sense and $D^2_x u(t,\cdot)$ is essentially bounded on $\R^d$, uniformly with respect to $t\in[0,T]$;\\
(ii) $\rho$ solves the Fokker-Planck equation in \eqref{eq:mfg} in the distributional sense and $(\rho_t)_{t\in[0,T]}$ is a continuous curve in the metric space $(\sP_1(\mathbb R^d),W_1)$.
\end{definition}

\begin{remark}
It is important to underline that the uniform bound on $D^2_x u(t,\cdot)$ in the notion of solution is a consequence of the convexity of $x\mapsto g(x,\mu)$ and $(x,v)\mapsto L(x,v,\mu)$ for all $\mu\in\sP_2(\R^d)$ and this bound is independent of the intensity of the noise $\b^2\ge 0$. In the case when $\b=0$, this estimate will further imply (see Lemma \ref{lem:HJ} below) that $u$ is a classical solution to the Hamilton--Jacobi--Bellman equation.
\end{remark}

\begin{lemma}\label{lem:HJ}
Let $\rho:[0,T]\to \cP_2(\R^d)$ be a given continuous curve with respect to $W_1$ and let $C_\rho>0$ such that $M_2(\rho_t)\le C_\rho$ for all $t\in[0,T]$. Let us suppose that the assumptions \eqref{hyp:H-L}-\eqref{hyp:L-g_bdbl}
% \eqref{hyp:g_mon-2} and \eqref{hyp:L-mon-2} 
 take place. Suppose furthermore that $x\mapsto g(x,\mu)$ and $(x,v)\mapsto L(x,v,\mu)$ are convex for all $\mu\in\sP_2(\R^d).$
 
Then the problem
\begin{equation}\label{eq:HJ}
\left\{
\begin{array}{ll}
-\partial_t u(t,x) + H(x,-D_xu(t,x),\rho_t)=0,&  {\rm{in}}\ (0,T)\times\M,\\[3pt]
u(T,x)=g(x,\rho_T), & {\rm{in}}\ \M,
\end{array}
\right.
\end{equation}
has a unique viscosity solution $u:[0,T]\times\R^d\to\R$, which is continuously differentiable. Furthermore, this solution satisfies the following derivative estimates: there exists $C_0>0$ (depending only on the data and $T$), and for all $R>0$ there exist $C_1=C_1(R)>0$ (depending on $R$, the data and $T>0$) and $C_2=C_2(R,\rho)>0$ (depending on $R>0$, $C_\rho$, the data and $T>0$), such that %if $\beta=0$, then
\begin{itemize}
\item[(i)] $|D_x u(t,x)|\le C_1$  for all $(t,x)\in [0,T]\times B_R$;
\item[(ii)]  $|\partial_t u(t,x)|\le C_2$ for all $(t,x)\in [0,T]\times B_R$;
\item[(iii)] $|D^2_xu(t,x)|\le C_0$ for all $t\in [0,T]$ and a.e. $x\in\R^d$;
\item[(iv)] $|\partial_t D_x u(t,x)|\le C_2$, %$|\partial^2_{tt}u(t,x)|\le C_2$ 
for a.e. $(t,x)\in [0,T]\times B_R$.
\end{itemize}
%If $\beta>0$, then 
%\begin{itemize}
%\item[(iv)] $|\nabla u(t,x)|\le C_1$; for all $(t,x)\in [0,T]\times B_R$;
%\item[(v)] $|D^2_xu(t,x)|\le C_0$ for all $t\in [0,T]$ and a.e. $x\in\R^d$;
%\item[(vi)] 
%\end{itemize}
\end{lemma}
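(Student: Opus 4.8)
The plan is to realize $u$ as the value function of the deterministic ($\beta=0$) Bolza problem
\[
u(t,x)=\inf\left\{\int_t^T L(X_s,\dot X_s,\rho_s)\,\dd s+g(X_T,\rho_T)\ :\ X\in W^{1,1}((t,T);\R^d),\ X_t=x\right\},
\]
and to read all the asserted estimates off this representation. First I would run Tonelli's direct method: the superlinear coercivity and lower bounds in \eqref{hyp:L-g_bdbl}, together with the convexity of $v\mapsto L(x,v,\mu)$ and the local $C^{1,1}$-regularity in \eqref{hyp:H-L}, provide a minimizer $X^{t,x}$ for every $(t,x)$, unique by the strict convexity of $L$ in $v$ from \eqref{hyp:H-conv-in-p}, whose $C^0$- and $W^{1,1}$-norms on $[0,T]$ are controlled in terms of $|x|$, $C_\rho$ and the data. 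The dynamic programming principle for this value function is classical and makes $u$ a viscosity solution of \eqref{eq:HJ}, while uniqueness in the appropriate growth class follows from the comparison principle for \eqref{eq:HJ} under \eqref{hyp:H-L}--\eqref{hyp:L-g_bdbl}. Estimates (i) and (ii) then come from elementary competitor arguments: for (i) one compares the minimizer issued from $x\in B_R$ with its translate issued from a nearby point, using the a priori bound on $X^{t,x}$ and the local Lipschitz continuity of $D_xL$ and $D_xg$; for (ii) one uses the dynamic programming identity $u(t,x)=\int_t^s L(X_\tau,\dot X_\tau,\rho_\tau)\,\dd\tau+u(s,X_s)$ along the constant control, together with (i), the bound on $L$ on the resulting compact set, and the $W_1$-continuity of $\tau\mapsto\rho_\tau$; the dependence of $C_2$ on $\rho$ enters through the identity $\partial_tu=H(x,-D_xu,\rho_t)$.

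The heart of the lemma is the two-sided bound (iii), the only place where the convexity hypothesis is used. For the lower bound $D^2_xu(t,\cdot)\ge0$ I would show that $u(t,\cdot)$ is convex directly on the variational problem: given $x^0,x^1$ with (near-)minimizing pairs $(X^0,\dot X^0),(X^1,\dot X^1)$ and $\lambda\in[0,1]$, the arc $X^\lambda:=(1-\lambda)X^0+\lambda X^1$ is admissible from $(1-\lambda)x^0+\lambda x^1$ with velocity $(1-\lambda)\dot X^0+\lambda\dot X^1$, so the joint convexity of $(x,v)\mapsto L(x,v,\rho_s)$ and of $x\mapsto g(x,\rho_T)$ yields $u(t,(1-\lambda)x^0+\lambda x^1)\le(1-\lambda)u(t,x^0)+\lambda u(t,x^1)$. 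For the upper bound I would prove semiconcavity by translating the optimal trajectory: with $X=X^{t,x}$ its minimizer, $\alpha=\dot X$, and $h\in\R^d$ small, the curves $X\pm h$ are admissible from $x\pm h$ with the same velocity $\alpha$, so
\begin{align*}
u(t,x+h)+u(t,x-h)-2u(t,x)&\le\int_t^T\big[L(X_s+h,\alpha_s,\rho_s)+L(X_s-h,\alpha_s,\rho_s)-2L(X_s,\alpha_s,\rho_s)\big]\,\dd s\\
&\quad+\big[g(X_T+h,\rho_T)+g(X_T-h,\rho_T)-2g(X_T,\rho_T)\big]\le C_0|h|^2,
\end{align*}
the last inequality being the $C^{1,1}$-regularity (uniform in $\mu$) of $L(\cdot,v,\mu)$ and $g(\cdot,\mu)$ from \eqref{hyp:H-L} applied on the compact set containing the relevant trajectories. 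Combining the two gives $0\le D^2_xu(t,\cdot)\le C_0I_d$ a.e.; in particular $u(t,\cdot)\in C^{1,1}_{\rm loc}$ with a bound independent of $\beta$, which is the fixed-measure regularity of Goebel \cite{Goe:05-1} (see also \cite{BarEva}).

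To upgrade to $C^1$ and to obtain (iv): strict convexity of $L$ in $v$ from \eqref{hyp:H-conv-in-p} makes $X^{t,x}$ the unique minimizer and forces its continuous dependence on $(t,x)$, so the costate relation $D_xu(t,x)=-D_vL(x,\dot X^{t,x}_t,\rho_t)$ is continuous on $[0,T]\times\R^d$; substituting into \eqref{eq:HJ}, $\partial_tu=H(x,-D_xu,\rho_t)$ is continuous as well by \eqref{C11xp}--\eqref{C0muW1}, hence $u\in C^1([0,T]\times\R^d)$. Finally, $u$ being convex in $x$ and locally Lipschitz in $t$, it is twice differentiable for a.e. $(t,x)$; differentiating \eqref{eq:HJ} in $x$ at such points gives $\partial_tD_xu=D_xH(x,-D_xu,\rho_t)-D^2_xu\,D_pH(x,-D_xu,\rho_t)$, whose right-hand side is bounded on $[0,T]\times B_R$ by (i), (iii) and the local boundedness of $D_xH,D_pH$ — with a constant depending on $\rho$ through the measure slot of $H$ — which is (iv).

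The step I expect to be the main obstacle is the two-sided Hessian bound (iii): one must combine the \emph{global} convexity of $u(t,\cdot)$, which is visible only through the fully convex variational structure and not through the PDE, with a semiconcavity estimate whose constant must be controlled uniformly in $t$ (and, for the later sections, in $\beta$), and one must make sure the competitor arguments for (i), (ii), (iii) use only localizations consistent with the claimed dependence of $C_0,C_1,C_2$ on the data. A subsidiary technical point is pinning down the precise growth class in which the comparison principle identifies $u$ as the unique viscosity solution of \eqref{eq:HJ}.
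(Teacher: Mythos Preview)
Your proposal is correct and follows essentially the same route as the paper: identify $u$ with the value function, obtain local Lipschitz continuity and semiconcavity by standard competitor arguments (the paper simply cites \cite{CanSin} here, whereas you sketch the translation-of-trajectory proof), prove convexity of $u(t,\cdot)$ by interpolating (near-)optimal arcs to get (iii), and then read (ii) and (iv) off the equation and its $x$-derivative. The only cosmetic deviation is that the paper uses $\varepsilon$-optimal controls in the convexity step, thereby bypassing your preliminary Tonelli existence discussion.
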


\begin{proof}
It is well-known that solutions to \eqref{eq:HJ} are intimately linked to value functions in optimal control problems. Under our standing assumptions, classical results (cf. \cite{CanSin}) imply that the unique viscosity solution of \eqref{eq:HJ} can be obtained as the value function of the optimal control problem. Given $\rho$ as stated in the Lemma \ref{lem:HJ},
\begin{align*}
u(t,x)=\inf\left\{\int_t^T L(X_s,\alpha_s,\rho_s)\dd s+ g(X_T,\rho_T)\right\},\ \ {\rm{s.\ t.}}\ \ 
\left\{
\begin{array}{ll}
\dd X_s=\alpha_s\dd s, & s\in(t,T),\\
X_t = x,
\end{array}
\right.
\end{align*}
where the infimum is taken over all $\alpha\in L^2([t,T])$.

It is standard (cf. \cite{CanSin}) to show that $u$ is continuous in $t$, $u(t,\cdot)$ is locally Lipschitz continuous on $\R^d$ and $u(t,\cdot)$ is semi-concave, uniformly with respect to $t\in[0,T]$.

These arguments yield (i).

\medskip

{\it Claim.} $u(t,\cdot)$ is convex, uniformly in time.

\medskip

{\it Proof of Claim.} Let $x^1,x^2\in\R^d$ and let $\l\in[0,1]$. For $\e>0$, let $\a^{i,\e}$, $i=1,2$ be $\e$-optimal controls and let $X^{i,\e}$ be the corresponding paths, so we have
$$
\int_t^T L(X^{i,\e}_s,\alpha^{i,\e}_s,\rho_s)\dd s+ g(X^{i,\e}_T,\rho_T)\le u(t,x^i)+\e.
$$
Let us set $Y^\l=(1-\l)X^{1,\e}+\l X^{2,\e}$, so in particular $Y^\l_0=(1-\l)x^1+\l x^2$. We notice that $Y^\l$ is an admissible competitor for $u(t,(1-\l)x^1+\l x^2)$. 
We have 
\begin{align*}
u(t,(1-\l) x^1+\l x^2)&\le \int_t^T L(Y^{\l}_s,(1-\l)\alpha^{1,\e}_s+\l\alpha^{2,\e}_s,\rho_s)\dd s+ g(Y^{\l}_T,\rho_T)\\
&\le (1-\l)u(t,x^1) + \l u(t,x^2) - 2\e
\end{align*}
where, in the last inequality we have used the convexity of $L(\cdot,\cdot,\mu)$ and $g(\cdot,\mu)$ and the $\e$-optimality of the curves $X^{i,\e}$. By the arbitrariness of $\e>0$, we conclude about the the convexity of $u(t,\cdot)$ and the claim follows. Together with the semi-concavity result this implies (iii).

\medskip

Now, from the Hamilton--Jacobi equation, we find that $\partial_t u$ must be locally bounded. Thus, (ii) follows.

\medskip

We further differentiate the equation with respect to $x$ and find that $\partial_tD_x u$ must be locally bounded, which implies in particular that $D_x u$ is Lipschitz continuous with respect to $t$ (locally uniformly with respect to $x$). Looking again at the equation, this means that $\partial_t u$ must be continuous. The corresponding constants in the estimates are such as they are specified in the statement of the theorem.
\end{proof}

\begin{remark}
It is immediate to see that the implication of the Lemma \ref{lem:HJ} and the estimates (i)-(iii) (except point (iv)) remain valid also in the case of $\b^2>0$ (using again some classical results, cf. \cite{BuckCanQui}). The constants in those estimates are independent of $\b$.
\end{remark}

The following result will not surprise experts in optimal transport theory. However, for completeness we supply its proof here. 

\begin{lemma}\label{lem:continuity}
Let $T>0$ and $V:[0,T]\times\R^d\times\sP_2(\R^d)\to\R^d$ be a given continuous vector field and suppose that there exists $C_V>0$ such that $x\mapsto V(t,x,\mu)$ is Lipschitz continuous with constant $C_V$, uniformly in $(t,\mu)$,
\begin{align}\label{hyp:V}
%|V(t,x,\mu)-V(t,y,\nu)|\le C_V(|x-y|+W_2(\mu,\nu)),\ \forall t\in[0,T],\ \forall x,y\in\R^d,\ \forall\mu,\nu\in\sP_2(\R^d),
V(t,\cdot,\cdot) {\rm{\ satisfies\ }} \eqref{D_pHLip-alternative},\ {\rm{with\ some\ }}\omega_R,\ \forall R>0,\ \forall t\in [0,T], 
\end{align}
and
$$
|V(t,0,\mu)|\le C_V,\ \forall t\in[0,T],\ \forall\mu\in\sP_1(\R^d)
$$
and there exists $\omega:[0,+\infty)\to[0,+\infty)$ continuous increasing with $\omega(0)=0$ such that
\begin{equation}\label{hyp:V_new}
|V(t,x,\mu)-V(t,x,\nu)|\le C_V(|x|+1)\omega(W_1(\mu,\nu)), \ {\rm{if}}\ W_1(\mu,\nu)\ll 1.
\end{equation}

Then, for any $\rho_0\in\sP_2(\R^d)$ the problem 
\begin{align}\label{eq:continuity}
\left\{
\begin{array}{ll}
\ds\partial_t\rho_t+D_x\cdot(\rho_t V(t,x,\rho_t))=0, & {\rm{in}}\ \sD'((0,T)\times\M),\\[3pt]
\rho(0,\cdot)=\rho_0.
\end{array}
\right.
\end{align}
has a unique solution. Moreover,  we have that there exists $C=C(T,C_V,M_2(\rho_0))>0$ such that 
\begin{align*}
M_2(\rho_t)\le C,\ \forall t\in[0,T] \ \ {\rm{and}}\ \ W_1(\rho_t,\rho_s)\le C|s-t|,\ \forall s,t\in[0,T].
\end{align*}
%Here if $\s\equiv 0$ then $\a=1$, otherwise, $\a=\frac12$.
\end{lemma}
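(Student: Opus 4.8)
The plan is to get existence through a Schauder fixed-point scheme (freezing the measure argument to decouple \eqref{eq:continuity} into a linear, $x$-Lipschitz continuity equation solved by the flow) and uniqueness through an Osgood-type Grönwall estimate in $W_2$.

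For existence, I would first freeze the measure: given $\mu\in C([0,T];(\sP_1(\R^d),W_1))$, the vector field $b_\mu(t,x):=V(t,x,\mu_t)$ is Lipschitz in $x$ with constant $C_V$ uniformly in $t$ and has linear growth $|b_\mu(t,x)|\le C_V(1+|x|)$ (using $|V(t,0,\mu_t)|\le C_V$), so by the Cauchy--Lipschitz theorem it generates a unique flow $X^\mu_t:\R^d\to\R^d$ with $\dot X^\mu_t=b_\mu(t,X^\mu_t)$, $X^\mu_0=\mathrm{id}$; set $\Phi(\mu):=\big((X^\mu_t)_\sharp\rho_0\big)_{t\in[0,T]}$. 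Grönwall applied to $|X^\mu_t(x)|$ gives $M_2(\Phi(\mu)_t)\le C$ with $C=C(T,C_V,M_2(\rho_0))$, and integrating $|b_\mu|$ along the flow gives $W_1(\Phi(\mu)_t,\Phi(\mu)_s)\le\|X^\mu_t-X^\mu_s\|_{L^1(\rho_0)}\le C|t-s|$, so $\Phi$ maps
\[
\cS:=\Big\{\mu\in C([0,T];(\sP_1,W_1)):\ \mu_0=\rho_0,\ M_2(\mu_t)\le C,\ W_1(\mu_t,\mu_s)\le C|t-s|\ \forall s,t\Big\}
\]
into itself. The set $\cS$ is convex, and compact for uniform $W_1$-convergence by Arzelà--Ascoli, using the equi-Lipschitz-in-time bound and the tightness coming from the uniform second-moment control. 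Continuity of $\Phi$ on $\cS$ follows from one more Grönwall estimate: if $\mu^n\to\mu$ uniformly, then by \eqref{hyp:V_new}
\[
\frac{d}{dt}\big|X^{\mu^n}_t-X^\mu_t\big|\le C_V\big|X^{\mu^n}_t-X^\mu_t\big|+C_V\big(|X^{\mu^n}_t|+1\big)\,\omega\big(W_1(\mu^n_t,\mu_t)\big),
\]
so, taking $L^1(\rho_0)$-norms and invoking the uniform bound $\sup_{n,t}\|X^{\mu^n}_t\|_{L^1(\rho_0)}<\infty$ together with $\sup_t W_1(\mu^n_t,\mu_t)\to0$, we obtain $\sup_t\|X^{\mu^n}_t-X^\mu_t\|_{L^1(\rho_0)}\to0$, hence $\sup_t W_1(\Phi(\mu^n)_t,\Phi(\mu)_t)\to0$. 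Schauder's fixed-point theorem then yields $\rho\in\cS$ with $\Phi(\rho)=\rho$; computing $\frac{d}{dt}\int\varphi\,d\rho_t$ against $\varphi\in C_c^\infty(\R^d)$ along the flow shows $\rho$ solves \eqref{eq:continuity} distributionally, and the stated bounds hold by construction.

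For uniqueness (with $\rho_0$ fixed), the key point is that any solution $\rho$ as in the statement has $\int_0^T\!\int|V(t,x,\rho_t)|\,d\rho_t\,dt<\infty$ (since $\rho\in C([0,T];\sP_1)$) and drift $V(t,\cdot,\rho_t)$ Lipschitz in $x$; so by the superposition principle and uniqueness for the continuity equation with Lipschitz-in-space drift (cf. \cite{AmbGigSav}), $\rho_t=(X_t)_\sharp\rho_0$ for the flow $X$ of $t\mapsto V(t,\cdot,\rho_t)$, and in particular $M_2(\rho_t)\le R:=C(T,C_V,M_2(\rho_0))$ for all $t$. Given two such solutions $\rho^1,\rho^2$ starting at $\rho_0$, each is Lipschitz in $(\sP_2,W_2)$ with admissible velocity field $V(t,\cdot,\rho^i_t)$; writing $w(t):=W_2(\rho^1_t,\rho^2_t)$ and using the standard upper estimate for $\frac{d}{dt}W_2^2$ along continuity equations, for a.e.\ $t$ and any optimal plan $\gamma_t\in\Pi_o(\rho^1_t,\rho^2_t)$,
\[
\frac{d}{dt}\frac12 w(t)^2\le\iint\big\langle V(t,x,\rho^1_t)-V(t,y,\rho^2_t),\,x-y\big\rangle\,d\gamma_t(x,y).
\]
Splitting the integrand as $\langle V(t,x,\rho^1_t)-V(t,x,\rho^2_t),x-y\rangle+\langle V(t,x,\rho^2_t)-V(t,y,\rho^2_t),x-y\rangle$, the second term is $\le C_V|x-y|^2$ by Lipschitzness (contributing $\le C_V w(t)^2$), while the first term is exactly of the form in \eqref{D_pHLip-alternative}, which holds by \eqref{hyp:V} applied with $\mu^1=\rho^1_t$, $\mu^2=\rho^2_t\in\{M_2\le R\}$ and the optimal plan $\gamma_t$, hence is $\le\omega_R(w(t))\,w(t)$. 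Thus $w'\le C_V w+\omega_R(w)$ with $w(0)=0$, and since the Osgood condition $\int_{0^+}\!\mathrm{d}s/\omega_R(s)=+\infty$ survives the addition of the linear term $C_V s$, Osgood's lemma forces $w\equiv0$, i.e.\ $\rho^1\equiv\rho^2$.

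The step I expect to be the main obstacle is the uniqueness part: one must argue carefully that an arbitrary distributional solution is transported by the flow of its own merely $x$-Lipschitz velocity field, and that the differentiation formula for $W_2^2$ is valid \emph{with an optimal plan}, so that the structural assumption \eqref{D_pHLip-alternative}, which is phrased precisely in terms of optimal plans between measures with bounded second moments, can be applied; once this is in place, the Grönwall/Osgood conclusion is routine. Establishing compactness of $\cS$ and the self-map and continuity estimates for $\Phi$ is more standard, but does rely on the tightness provided by the uniform second-moment bound and on the $W_1$-modulus \eqref{hyp:V_new} of $V$ in the measure variable.
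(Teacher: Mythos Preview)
Your existence argument is essentially identical to the paper's: both freeze the measure, solve the linear continuity equation via the flow of the $x$-Lipschitz field, establish the uniform second-moment bound and the $W_1$-Lipschitz-in-time estimate by Gr\"onwall, prove continuity of the fixed-point map via a Gr\"onwall estimate on the flow difference using \eqref{hyp:V_new}, and conclude by Schauder. The only cosmetic difference is that you restrict the domain to the compact convex set $\cS$ and show it is self-mapped, whereas the paper shows the range of the operator on all of $C([0,T];(\sP_1,W_1))$ is relatively compact; both are standard Schauder formulations.

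For uniqueness the approaches diverge in presentation but not in substance. The paper does not argue at all: it simply observes that \eqref{hyp:V} puts $V$ within the scope of \cite[Theorem~3.3]{GanSwi:14-particles} and cites that result. What you wrote is essentially a sketch of the proof of that theorem: represent each distributional solution by its flow (using the $x$-Lipschitz drift and the superposition/uniqueness theory of \cite{AmbGigSav}), differentiate $W_2^2$ along the two curves with an optimal plan, split the velocity difference so that the Lipschitz-in-$x$ part gives a linear term and the measure-dependence is controlled exactly by the structural hypothesis \eqref{D_pHLip-alternative}, and close with Osgood's lemma (the Osgood condition on $\omega_R$ survives the addition of a linear term, since on $\{s:\omega_R(s)\ge C_V s\}$ one compares to $2\omega_R$ and on the complement to $2C_V s$). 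The technical points you flag --- that an arbitrary distributional solution is carried by the flow, and that the upper derivative of $W_2^2$ can be written against an \emph{optimal} plan so that \eqref{D_pHLip-alternative} applies --- are precisely the ingredients that \cite{GanSwi:14-particles} packages; your identification of them as the crux is correct. So your proposal is sound, and relative to the paper it has the virtue of being self-contained rather than a black-box citation, at the cost of having to justify those two analytic facts carefully.
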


\begin{proof}
First, let us notice that by the assumption on $V$, we have
\begin{align*}
|V(t,x,\mu)|\le |V(t,0,\mu)|+ C_V|x|\le C_V(1+|x|).
\end{align*}

{\it Existence.} We define the operator $\cS:C([0,T];(\sP_1(\R^d),W_1))\to C([0,T];(\sP_1(\R^d),W_1))$ as follows. For $\tilde\rho\in C([0,T];(\sP_1(\R^d),W_1))$, we set $\cS(\tilde\rho):=\rho$, where $\rho$ is the unique solution to the problem 
\begin{equation}\label{eq:cont_existence}
\left\{
\begin{array}{ll}
\partial_t\rho_t+D_x\cdot(\rho_t V(t,x,\tilde\rho_t))=0, & {\rm{in}}\ \sD'((0,T)\times\M),\\[3pt]
\rho(0,\cdot)=\rho_0.
\end{array}
\right.
\end{equation}
The well-posedness of this is the consequence of classical results, since by the assumptions, $V(\cdot,\cdot,\tilde\rho_t)$ is Lipschitz continuous in space and continuous in time.  Now, let us show that the range of $\cS$ is a compact subset of $C([0,T];(\sP_1(\R^d),W_1))$.

\medskip

Claim 1. $M_2(\rho_t)$ is uniformly bounded if $t\in[0,T]$ (independently of $\tilde\rho$).

\medskip

Proof of Claim 1. Because of the regularity on $V(\cdot,\cdot,\tilde\rho_t)$, the solution of \eqref{eq:cont_existence} can be represented along the flow of the vector field, i.e.

\begin{align*}
\left\{
\begin{array}{ll}
\dot X(t,x)=V(t,X(t,x),\tilde\rho_t), & t\in(0,T),\\[3pt]
X(0,x)=x, & x\in\R^d.
\end{array}
\right.
\end{align*}
First, let us notice that 
\begin{align*}
|X(t,x)|\le |x|+\int_0^t C_V(1+|X(s,x)|)\dd s=|x|+C_Vt+C_V\int_0^t|X(s,x)|\dd s,
\end{align*}
thus Gr\"onwall's inequality yields
\begin{equation}\label{eq:X_growth}
|X(t,x)|\le (|x|+C_Vt)e^{tC_V},
\end{equation}
which further implies 
\begin{align}\label{eq:mom1}
|X(t,x)|^2\le 2(|x|^2+C_V^2t^2)e^{2tC_V}.
\end{align}
Since $\rho_t=X(t,\cdot)_\sharp\rho_0$, for any $\vphi\in C_b(\R^d)$ we have
\begin{align}\label{eq:mom2}
\int_{\R^d}\vphi(x)\dd\rho_t(x)=\int_{\R^d}\vphi(X(t,x))\dd\rho_0(x).
\end{align}

For $R>0$ we consider $\vphi_R\in C_b(\R^d)$ defined as $\vphi_R(x):=\min\{R^2,|x|^2\}$. Clearly, $\vphi_R\to |x|^2$, locally uniformly as $R\to+\infty$. \eqref{eq:mom2} and \eqref{eq:mom1} yield that there exists a constant $\tilde C>0$ (depending only on $C_V$ and $T$) such that 
\begin{align*}
\int_{\R^d}\vphi_R(x)\dd\rho_t(x)&=\int_{\R^d}\vphi_R(X(t,x))\dd\rho_0(x)\le \int_{\R^d}|X(t,x)|^2\dd\rho_0\\
&\le \tilde C +\tilde C\int_{\R^d}|x|^2\dd\rho_0=\tilde C(1+M_2^2(\rho_0)). 
\end{align*}
Now, by the dominated convergence theorem, as $R\to +\infty$, we have that
$$
M_2(\rho_t)\le \tilde C(1+M_2(\rho_0)),
$$
as desired.

%Let $(\vphi_\e)_{\e>0}$ be a sequence of nonnegative functions in $C_b(\R^d)$ such that $\vphi_\e\to |x|^2$ locally uniformly as $\e\to 0$  and $\vphi_\e(x)\le C(1+|x|^2)$ for some $C>0$ (for all $\e>0$). \eqref{eq:mom2} and \eqref{eq:mom1} yield that there exists a constant $\tilde C>0$ (depending only on $C,C_V$ and $T$) such that 
%\begin{align*}
%\int_{\R^d}\vphi_\e(x)\dd\rho_t(x)&=\int_{\R^d}\vphi_\e(X(t,x))\dd\rho_0(x)\le \int_{\R^d}C(1+|X(t,x)|^2)\dd\rho_0\\
%&\le \tilde C +\tilde C\int_{\R^d}|x|^2\dd\rho_0=\tilde C(1+M_2^2(\rho_0)). 
%\end{align*}
%Now, by the dominated convergence theorem, as $\e\to 0$, we have that
%$$
%M_2(\rho_t)\le \tilde C(1+M_2(\rho_0)),
%$$
%as desired.

\medskip

Claim 2.  There exists $C>0$ (independent of $\tilde\rho$) such that
$$
W_1(\rho_t,\rho_s)\le C |t-s|,\ \forall t,s\in[0,T].
$$

\medskip

Proof of Claim 2. Let us suppose that $0\le s\le t\le T$. 

\medskip

Let $\vphi$ be $1-{\rm{Lip}}(\R^d).$  Then we have
\begin{align*}
\int_{\R^d}\vphi(x)\dd(\rho_t-\rho_s)(x)&=\int_s^t\int_{\R^d}D_x\vphi\cdot V(\tau,x,\tilde\rho_\tau)\dd\rho_\tau(x)\dd \tau\\
&\le\int_s^t\int_{\R^d}C_V(1+|x|)\dd\rho_\tau(x)\dd\tau\\
&\le \int_s^t [C_V+M_2(\rho_\tau)]\dd\tau\le C|t-s|.
\end{align*}
Let us remark that all the integrals are finite by the second moment bounds on $\rho$. Now, taking supremum with respect to $\vphi$, $1-{\rm{Lip}}(\R^d)$ one obtains
$W_1(\rho_t,\rho_s)\le C|t-s|,$ so the claim follows.

\medskip

From Claim 1 and 2 we can conclude that the range of $\cS$ is compact. The continuity of $\cS$ is straightforward. Indeed, let $(\tilde\rho^n)_{n\in\N}$ be a sequence uniformly converging to $\tilde\rho$ in $C([0,T];(\sP_1(\R^d),W_1))$ as $n\to+\infty$. Let $\vphi$ be $1-{\rm{Lip}}(\R^d)$, set $\rho^n=\cS(\tilde\rho^n)$ and $\rho=\cS(\tilde\rho)$ and let $X^n$ and $X$ stand for the flows of the vector fields $V(\cdot,\cdot,\tilde\rho^n)$ and $V(\cdot,\cdot,\tilde\rho)$, respectively.

Then, one obtains
\begin{align}\label{ineq:cont_S}
\int_{\R^d}\vphi(x)\dd(\rho_t^n&-\rho_t)(x)=\int_{\R^d}[\vphi(X^n(t,x))-\vphi(X(t,x))]\dd\rho_0(x)\dd \tau\\
\nonumber&=\int_{\R^d}\int_0^1D_x\vphi(sX^n(t,x)+(1-s)X(t,x))\cdot[X^n(t,x)-X(t,x)]\dd s \dd\rho_0(x)\\
\nonumber&\le\int_{\R^d}|X^n(t,x)-X(t,x)| \dd\rho_0(x)
\end{align}
Now, by the ODEs satisfied by $X^n$ and $X$ we get
\begin{align*}
|X^n(t,x)-X(t,x)|&\le \int_0^t |V(\tau,X^n(\t,x),\tilde\rho^n)-V(\tau,X(\t,x),\tilde\rho)|\dd\t\\
&\le \int_0^t |V(\tau,X^n(\t,x),\tilde\rho^n_\t)-V(\t,X(\t,x),\tilde\rho^n_\t)|\dd\t\\
& + \int_0^t |V(\t,X(\t,x),\tilde\rho^n_\t)-V(\tau,X(\t,x),\tilde\rho)|\dd\t\\
&\le  C_V\int_0^t |X^n(\t,x)-X(\t,x)|\dd\t\\
&+\int_0^t C_V(|X(\t,x)|+1)\omega(W_1(\tilde\rho^n_\t,\tilde\rho_\t))\dd\t
\end{align*}
where in the last inequality, we used the assumptions on the field $V$. Using \eqref{eq:X_growth}, the previous chain of inequalities can be further estimated as
\begin{align*}
|X^n(t,x)-X(t,x)|&\le  C_V\int_0^t |X^n(\t,x)-X(\t,x)|\dd\t\\
&+\int_0^t C_V[(|x|+C_VT)e^{TC_V}+1]\omega\Big(\sup_{s\in[0,T]}W_1(\tilde\rho^n_s,\tilde\rho_s)\Big)\dd\t.
\end{align*}
By denoting $a_n:=\omega\Big(\sup_{s\in[0,T]}W_1(\tilde\rho^n_s,\tilde\rho_s)\Big)$, there exists a constant $C=C(T,C_V)$ such that 
\begin{align*}
|X^n(t,x)-X(t,x)|&\le  C_V\int_0^t |X^n(\t,x)-X(\t,x)|\dd\t\\
&+C(|x|+1)a_n.
\end{align*}
We notice that by assumption $\sup_{s\in[0,T]}W_1(\tilde\rho^n_s,\tilde\rho_s)\to 0$ as $n\to+\infty$, so $a_n\to 0$ as $n\to+\infty$, as well.

%By the continuity of $V(t,x,\cdot)$ with respect to $W_1$, uniformly in $t,x$, there exists a sequence of $(a_n)_{n\in\N}$ of nonnegative numbers converging to zero as $n\to+\infty$, such that 
%$$
%\int_0^t |V(\t,X(\t,x),\tilde\rho^n_\t)-V(\tau,X(\t,x),\tilde\rho)|\dd\t\le a_n,\ \ \forall t\in[0,T]\ \ {\rm{and}}\ \ \forall n\in\N.
%$$
%Using the Lipschitz continuity of $V$ in the second variable, together with the previous inequality, one can conclude that 
%\begin{align*}
%|X^n(t,x)-X(t,x)|&\le a_n + C_V\int_0^t |X^n(\t,x)-X(\t,x)|\dd\t
%\end{align*}
Gr\"onwall's inequality yields that
$$
|X^n(t,x)-X(t,x)|\le C(|x|+1)a_n e^{tC_V}\le C(|x|+1)a_n e^{TC_V}.
$$
Thus, 
$$
\lim_{n\to+\infty}\sup_{t\in[0,T]}\int_{\R^d}|X^n(t,x)-X(t,x)|\dd\rho_0(x)=0.
$$
So, by taking supremum with respect to $\vphi$ in \eqref{ineq:cont_S}, we can conclude that
$$
\lim_{n\to+\infty}\sup_{t\in[0,T]}W_1(\rho^n_t,\rho_t)=0,
$$
and so, the continuity of $\cS$ follows. So, finally, one can use Schauder's fixed point theorem to conclude that $\cS$ has a fixed point and therefore \eqref{eq:continuity} has a solution.

%\begin{align*}
%\int_{\R^d}\vphi(x)\dd(\rho_t^n&-\rho_t)(x)\\
%&=\int_0^t\int_{\R^d}D_x\vphi\cdot V(\tau,x,\tilde\rho^n_\tau)\dd\rho^n_\tau(x)\dd \tau-\int_0^t\int_{\R^d}D_x\vphi\cdot V(\tau,x,\tilde\rho_\tau)\dd\rho_\tau(x)\dd \tau\\
%&=\int_0^t\int_{\R^d}D_x\vphi\cdot [V(\tau,x,\tilde\rho^n_\tau)-V(\tau,x,\tilde\rho_\tau)]\dd\rho^n_\tau(x)\dd \tau\\
%&-\int_0^t\int_{\R^d}D_x\vphi\cdot V(\tau,x,\tilde\rho_\tau)\dd(\rho_\tau-\rho^n)(x)\dd \tau\\
%\end{align*}

%We have constructed a $(\rho^n)_{n\in\N}$ which is uniformly Lipschitz continuous with respect to $W_1$ with uniformly bounded second moment. Therefore, Arzel\`a-Ascoli's theorem yields that there exist $\rho:[0,T]\to\sP_2(\R^d)$ that is Lipschitz continuous with respect to $W_1$ and a subsequence of $(\rho^n)_{n\in\N}$ (that we do not relabel) such that this converges to $\rho$ in $W_1$ uniformly in time, as $n\to+\infty$.
%
%This convergence, together with the continuity of $V$ in the measure variable is enough to conclude that the limit $\rho$ is a solution to \eqref{eq:continuity}.

\medskip

{\it Uniqueness.}

By \eqref{hyp:V}, the vector field $V$ satisfies the assumptions in \cite[Theorem 3.3]{GanSwi:14-particles}, and therefore the uniqueness of solutions to \eqref{eq:continuity} follows from there.

\end{proof}

Now, we are in position to state the main result of this section.

\begin{theorem}\label{thm:exist_deter}
We suppose that all the assumptions \eqref{hyp:H-L}-\eqref{hyp:H-conv-in-p},  \eqref{D_pHLip-alternative}, \eqref{D_pH-bound} and \eqref{D_pH-W_1} take place and the functions $x\mapsto g(x,\mu)$ and $(x,v)\mapsto L(x,v,\mu)$ are convex. Then the mean field game system \eqref{eq:mfg}, with $\beta=0$, has a solution pair $(u,\rho)$.
\end{theorem}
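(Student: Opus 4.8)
The plan is to set up a Schauder fixed point argument on the measure component, exactly as the structure of the preceding lemmas suggests. Let $R_0 > 0$ be chosen so that $\sP_2^{R_0} := \{\mu \in \sP_2(\R^d) : M_2(\mu) \le R_0\}$ is invariant under the relevant dynamics; concretely, fix a closed convex subset
\[
\cK := \left\{ \rho \in C([0,T];(\sP_1(\R^d),W_1)) : M_2(\rho_t) \le R_0 \ \forall t, \ W_1(\rho_t,\rho_s) \le C_\star|t-s| \ \forall s,t \right\}
\]
of $C([0,T];(\sP_1(\R^d),W_1))$, where $R_0$ and $C_\star$ are to be calibrated below. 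For $\rho \in \cK$, Lemma~\ref{lem:HJ} produces a unique $C^1$ viscosity solution $u = u^\rho$ of the Hamilton--Jacobi equation \eqref{eq:HJ} with the $\beta$-independent bound $|D^2_x u(t,\cdot)| \le C_0$ a.e.\ and locally uniform bounds on $D_x u$ and $\partial_t u$. I then define the vector field $V^\rho(t,x) := -D_p H(x, -D_x u^\rho(t,x), \rho_t)$, which drives the continuity equation; here $x\mapsto -D_xu^\rho(t,x)$ plays the role of the $b$ in \eqref{D_pHLip-alternative}. Since $x \mapsto u^\rho(t,\cdot)$ is convex with Hessian bounded by $C_0$ and $D_pH$ is Lipschitz in $(x,p)$ by \eqref{D_pH-W_1}, the field $x \mapsto V^\rho(t,x)$ is Lipschitz with a constant depending only on the data (not on $\rho$), and it has linear growth by \eqref{D_pH-bound} together with the $C_0$-bound. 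Feeding $V^\rho$ into Lemma~\ref{lem:continuity}, I obtain a unique $\tilde\rho \in C([0,T];(\sP_1(\R^d),W_1))$ solving the continuity equation with $V^\rho$, and the quantitative estimates of that lemma give $M_2(\tilde\rho_t) \le C(T,C_V,M_2(\rho_0))$ and $W_1(\tilde\rho_t,\tilde\rho_s)\le C|t-s|$ with constants independent of $\rho \in \cK$. This is precisely what is needed to choose $R_0$ (large, depending on $M_2(\rho_0)$, $T$, and the data) and $C_\star$ so that the map $\Phi: \rho \mapsto \tilde\rho$ sends $\cK$ into itself, and $\cK$ is compact in $C([0,T];(\sP_1(\R^d),W_1))$ by Arzel\`a--Ascoli (equicontinuity from the $W_1$-Lipschitz bound, pointwise precompactness from the uniform second-moment bound, which forces tightness).

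The second main task is continuity of $\Phi$ on $\cK$. Suppose $\rho^n \to \rho$ uniformly in $C([0,T];(\sP_1(\R^d),W_1))$. I first need stability of the HJB solutions: $u^{\rho^n} \to u^\rho$ with $D_x u^{\rho^n}(t,\cdot) \to D_x u^\rho(t,\cdot)$ locally uniformly. This follows from the viscosity-solution stability theory together with the uniform local $C^{1,1}$ bounds from Lemma~\ref{lem:HJ}: the family $\{u^{\rho^n}\}$ is precompact in $C^1_{\rm loc}$, any limit is a viscosity solution of \eqref{eq:HJ} with data $g(\cdot,\rho_T)$ and Hamiltonian $H(\cdot,\cdot,\rho_t)$ by the $W_1$-continuity assumptions \eqref{C0muW1} and \eqref{D_pH-W_1}, and uniqueness identifies the limit as $u^\rho$. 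Consequently $V^{\rho^n} \to V^\rho$ locally uniformly in $(t,x)$, and moreover $|V^{\rho^n}(t,x) - V^\rho(t,x)|$ is controlled by $C(|x|+1)\omega(\sup_s W_1(\rho^n_s,\rho_s))$ plus a term from the convergence of $D_x u^{\rho^n}$; this is exactly the structure \eqref{hyp:V_new} needed to run the Gr\"onwall argument in the proof of Lemma~\ref{lem:continuity}, yielding $\sup_t W_1(\Phi(\rho^n)_t, \Phi(\rho)_t) \to 0$. Hence $\Phi$ is continuous, and Schauder's fixed point theorem applies: there is $\rho \in \cK$ with $\Phi(\rho) = \rho$. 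Setting $u = u^\rho$, the pair $(u,\rho)$ solves \eqref{eq:mfg} with $\beta = 0$, and $u$ has the required properties (viscosity/classical solution of the HJB equation with $D^2_x u$ essentially bounded uniformly in $t$, by Lemma~\ref{lem:HJ}), while $\rho$ is a distributional solution of the continuity equation and lies in $C([0,T];(\sP_1(\R^d),W_1))$.

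I expect the main obstacle to be verifying that the composite vector field $V^\rho(t,x) = -D_pH(x,-D_xu^\rho(t,x),\rho_t)$ genuinely inherits hypothesis \eqref{D_pHLip-alternative} (so that Lemma~\ref{lem:continuity} applies) with a modulus $\omega_R$ uniform over $\rho \in \cK$. The point is that \eqref{D_pHLip-alternative} is stated for $V_b = D_pH(x,b(x),\mu)$ with $b$ a fixed Lipschitz field of linear growth, whereas here the ``inner'' field $b(x) = -D_xu^\rho(t,x)$ itself depends on $\rho$ and on $t$; one must check that the class of such $b$'s that arises has Lipschitz and growth constants bounded uniformly (which it does, by the $\beta$-independent $C_0$-bound on $D^2_xu$ and the local gradient bound $C_1$ from Lemma~\ref{lem:HJ}, the latter being uniform over $\rho \in \cK$ because $M_2(\rho_t) \le R_0$), so that a single modulus $\omega_{R}$ serves for all of them. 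A secondary technical nuisance is bookkeeping the dependence of the constant $C_2 = C_2(R,\rho)$ in Lemma~\ref{lem:HJ}(ii),(iv) on $\rho$ — but since we restrict to $\rho$ with $M_2(\rho_t) \le R_0$, this dependence collapses to dependence on $R_0$, and the estimates become uniform over $\cK$, which is all that is required for the compactness and continuity arguments above.
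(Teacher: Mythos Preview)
Your proposal is correct and follows essentially the same Schauder fixed-point strategy as the paper's own proof: fix a measure curve $\rho$, solve the HJ equation via Lemma~\ref{lem:HJ}, feed the resulting globally Lipschitz velocity into Lemma~\ref{lem:continuity}, verify continuity of the map via viscosity-solution stability plus the uniform $C^{1,1}$ bounds, and apply Schauder. The only cosmetic differences are that the paper does not pre-select an invariant compact set $\cK$ (it argues directly that the image of the map $S$ is compact in $C([0,T];(\sP_1,W_1))$) and that it keeps the measure argument free in $V(t,x,\mu):=D_pH(x,-D_xu(t,x),\mu)$ so that the inner continuity equation is the genuinely nonlinear one of Lemma~\ref{lem:continuity}; note also the stray minus sign in your definition of $V^\rho$.
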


\begin{proof}
Let $\rho\in C([0,T];(\sP_1(\R^d),W_1))$ be given with $\rho|_{t=0}=\rho_0$. %Let $C_\rho>0$ such that $M_1(\rho_t)\le C_\rho$ for all $t\in[0,T]$. 
Let $u$ be the unique classical solution to \eqref{eq:HJ} provided in Lemma \ref{lem:HJ}. Now set 
$$
V(t,x,\mu):=D_pH(x,-D_x u(t,x),\mu).
$$
Clearly, by our standing assumptions and the results in Lemma \ref{lem:HJ}, $V$ satisfies \eqref{hyp:V} and \eqref{hyp:V_new} with a constant $C_V>0$ and $|V(t,0,\mu)|\le C_V$, where $C_V$ depends only on the data and $|D_x u(t,0)|$ (from \eqref{HLip}), but clearly $|D_x u(t,0)|$ depends only on the previous constants and therefore $C_V$ depends only on the data. We also also have $|D_x u(t,x)|\le C(|x|+1)$, for some constant $C>0$ depending on $T$ and the data.

Let $\ov\rho\in C([0,T];(\sP_1(\R^d),W_1))$ be the unique solution of \eqref{eq:continuity} starting at $\rho_0$ with the previously set vector field $V$. So, if one considers the mapping $S:C([0,T];(\sP_1(\R^d),W_1))\to C([0,T];(\sP_1(\R^d),W_1))$ such that $S(\rho)=\ov\rho$, this is well-defined.

We show now that $S$ is a continuous mapping. Let us take a sequence $(\rho^n)_{n\in\N}$ from the space $C([0,T];~(\sP_1(\R^d),W_1))$ that uniformly converges to some $\rho\in C([0,T];(\sP_1(\R^d),W_1))$ as $n\to+\infty$. %Let us notice that because of the uniform convergence, there exists $C_\rho>0$ depending only on $\rho$ (but independent of $n$) such that $M_1(\rho^n_t)\le C_\rho$ for all $n\in\N$ and $t\in[0,T]$.  
If we consider the corresponding unique solutions $(u^n)_{n\in\N}$ to \eqref{eq:HJ}, we find that all the bounds on $\partial_t u^n$, $D_x u^n$ and $D^2_{xx}u^n$, as stated in Lemma \ref{lem:HJ} are independent of $n$, and they just depend on the data. So, by the continuity assumptions on $H$ (transferred from the regularity assumptions on $L$) and $g$ in the measure variable, standard results on stability of viscosity solutions to Hamilton-Jacobi equations %{\color{red}(cf. \cite[Theorem 5.2.5]{CanSin})} 
yield that $(u^n)_{n\in\N}$ converges locally uniformly to the unique solution of \eqref{eq:HJ} (where as data, we consider the limit curve $\rho$).

Moreover, up to passing to a subsequence that we do not relabel, $(D_x u^n)_{n\in\N}$ converges locally uniformly to $D_x u$ on $\R^d$, uniformly with respect to $t$. So, the corresponding vector fields $V^n(t,x,\mu)=D_pH(x,-D_x u^n(t,x),\mu)$ also converge locally uniformly to $V(t,x,\mu)=D_pH(x,-D_x u(t,x),\mu)$ as $n\to+\infty$. Therefore, since the sequence of curves $(\rho^n)_{n\in\N}$ is uniformly Lipschitz continuous (with respect to $W_1$) and as their second moments are uniformly bounded (as provided in Lemma \ref{lem:continuity}), Arzel\`a-Ascoli's theorem yields the existence of a subsequence that converges uniformly to some $\tilde\rho$. However, passing to the limit the continuity equation, one must have that this limit $\tilde\rho$ is the solution of the equation, when we consider $u$. By uniqueness of solutions, one must have that $\tilde\rho=\rho$. So, the continuity of $S$ follows.

Now, it remains to show that $S$ satisfies the assumptions of Schauder's fixed point theorem. Clearly, the space of curves in $C([0,T];(\sP_1(\R^d),W_1)$ that start at the fixed $\rho_0$ is convex. Moreover, because of the results provided in Lemma \ref{lem:continuity}, we find that the image of $C([0,T];(\sP_1(\R^d),W_1)$ through $S$ is the space of curves that are uniformly Lipschitz continuous with respect to $W_1$ and such that their second moments are uniformly bounded by the a constant that depends only on the data and $M_2(\rho_0)$. Therefore, this image space is compact. So, Schauder's fixed point theorem yields the existence of a fixed point of $S$, and therefore the existence of a solution to \eqref{eq:mfg} follows. 

The fact that $D^2_x u(t,\cdot)\in L^\infty(\R^d\times\R^d)$, uniformly with respect to $t\in[0,T]$ follows from the estimates in Lemma \ref{lem:HJ}.
\end{proof}

\section{Uniqueness of solutions}\label{sec:uniqueness}

As our main results, in this section we shall prove the uniqueness of solutions to the MFG system \eqref{eq:mfg} for $\beta\in\R$.

Suppose that $(u,\rho)$ is a solution pair of the mean field game system \eqref{eq:mfg} for some $\rho_0\in\sP_2(\mathbb R^d)$. Then $(t,x)\mapsto D_pH(x,-D_xu(t,x),\rho_t)$ is continuous in $t$ and globally Lipschitz continuous in the $x$-variable, the flow associated to the Fokker-Planck equation reads as
\begin{equation}\label{eq:flow}
X_t=\xi+\int_{0}^t D_pH(X_s,-D_xu(t,X_s),\rho_s)\dd s+\beta B_t, \quad t\in[0,T]
\end{equation}
where $\xi\in \mathbb L^2(\mathcal{F}_0;\rho_0)$.
%The backward stochastic differential equation associated to the Hamilton-Jacobi equation reads as 
%\begin{equation}\label{BSDE}
%Y_t=-D_xg(X_T,\rho_T)+\int_t^T D_xH(X_s,Y_s,\rho_s)ds-\int_t^TZ_s dB_s, \quad t\in[t_0,T].
%\end{equation}
When $\beta^2>0$, by standard $C^{2,\alpha}$ estimates for parabolic equations, we have that $u\in C^{1+\frac{\alpha}{2},2+\alpha}_{\rm loc}((0,T)\times\R^d)$. We define 
\begin{equation}\label{eq:Y}
Y_t:=-D_xu(t,X_t)
\end{equation}
and if $\beta^2>0$ we further define
\begin{equation}\label{eq:Z}
Z_t:=- D^2_{xx}u(t,X_t).
\end{equation}

We would like to justify that $(X,Y,Z)$ is a strong solution to the following forward-backward (stochastic) differential equation on $[0,T]$ associated to the MFG system \eqref{eq:mfg}
\begin{equation}\label{eq:FBSDE}
\left\{
\begin{array}{l}
\ds X_t=\xi+\int_{0}^tD_pH(X_s,Y_s,\rho_s)\dd s+\beta B_t, \\
\ds Y_t=-D_xg(X_T,\rho_T)+\int_t^T D_xH(X_s,Y_s,\rho_s)\dd s-\beta\int_t^TZ_s\dd B_s.
\end{array}
\right.
\end{equation}
Let us remark that in the case of $\b=0$, this system corresponds to a standard Hamiltonian system.

\begin{theorem}\label{thm:FBSDE}
Assume that \eqref{HLip} holds. Let $(u,\rho)$ be a solution pair of the mean field game system \eqref{eq:mfg} for some $\rho_0\in\sP_2(\mathbb R^d)$, and let $(X,Y,Z)$ be defined in \eqref{eq:flow}, \eqref{eq:Y} and \eqref{eq:Z}. Then $(X,Y,Z)$ is a strong solution to the forward-backward (stochastic) differential equation \eqref{eq:FBSDE}.
\end{theorem}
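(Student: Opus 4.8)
The plan is to verify the two lines of \eqref{eq:FBSDE} separately. The forward equation is essentially built into the definitions: by \eqref{eq:Y} we have $Y_s=-D_xu(s,X_s)$, so the integral equation \eqref{eq:flow} defining the flow $X$ is precisely the first line of \eqref{eq:FBSDE}; $X$ is $\{\cF_t\}$-adapted by construction, and combining the linear growth of $D_pH$ in $(x,p)$ from \eqref{HLip} with the bound $|D_xu(t,\cdot)|\le C(1+|\cdot|)$ (from Lemma \ref{lem:HJ} and the remark following it) one gets $\E\big[\sup_{t\le T}|X_t|^2\big]<\infty$ by Gr\"onwall, so that $Y$ and $Z$ are adapted and square integrable. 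The whole difficulty is thus the backward equation, which I would obtain by applying It\^o's formula (when $\beta\neq0$), respectively the classical chain rule along characteristics (when $\beta=0$), to $s\mapsto Y_s=-D_xu(s,X_s)$, after differentiating the Hamilton--Jacobi equation in $x$.

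When $\beta\neq0$, differentiating the HJB equation of \eqref{eq:mfg} in $x$ shows that $w:=-D_xu$ solves, in the distributional sense on $(0,T)\times\M$, the linear uniformly parabolic system $\partial_t w+\tfrac{\beta^2}{2}\Delta w+D_pH(x,w,\rho_t)\cdot D_xw=-D_xH(x,w,\rho_t)$. Since $D_xH$ and $D_pH$ are globally Lipschitz, the coefficient and the source of this system are locally Lipschitz in $x$, so bootstrapping from $u\in C^{1+\alpha/2,2+\alpha}_{\rm loc}$ through interior $L^p$ and Schauder estimates yields $w\in C^{1+\alpha/2,2+\alpha}_{\rm loc}([0,T)\times\M)$, in particular $w\in C^{1,2}$ on every compact subset of $[0,T)\times\M$. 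I would then apply It\^o's formula to $w(s,X_s)$ on $[t,T-\delta]$, use the linear PDE above to cancel the $D_xw$ and $\Delta w$ terms against the drift $D_pH(X_s,Y_s,\rho_s)$ of $X$, and read off that $Y_{T-\delta}-Y_t=-\int_t^{T-\delta}D_xH(X_s,Y_s,\rho_s)\,\dd s+\beta\int_t^{T-\delta}Z_s\,\dd B_s$, with $Z_s=D_xw(s,X_s)=-D^2_{xx}u(s,X_s)$ as in \eqref{eq:Z}. Letting $\delta\to0$, the right-hand side converges because the drift integrand is bounded by $C(1+|X_s|)$ and $Z$ is uniformly bounded (the a priori $L^\infty$ bound on $D^2_{xx}u$ from the notion of solution holds up to $t=T$, uniformly in $\beta$), while on the left $Y_{T-\delta}=-D_xu(T-\delta,X_{T-\delta})\to-D_xu(T,X_T)=-D_xg(X_T,\rho_T)$ by continuity of $D_xu$ up to $T$ and the terminal condition $u(T,\cdot)=g(\cdot,\rho_T)$. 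Rearranging gives the second line of \eqref{eq:FBSDE}.

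When $\beta=0$, $u$ is a classical solution of the first-order HJB equation with $D_xu$ globally Lipschitz in $x$ and with $D^2_{xx}u,\partial_tD_xu\in L^\infty_{\rm loc}$ by Lemma \ref{lem:HJ}, hence $D_xu\in W^{1,\infty}_{\rm loc}((0,T)\times\M)$ and the differentiated identity $\partial_tD_xu=D_xH(x,-D_xu,\rho_t)-D^2_{xx}u\,D_pH(x,-D_xu,\rho_t)$ holds a.e. Because $D_pH$ is Lipschitz in $x$, the characteristic flow $x\mapsto X(\cdot,x)$ is bi-Lipschitz, so by Fubini the curve $s\mapsto(s,X(s,x))$ avoids, for $\rho_0$-a.e.\ $x$, the Lebesgue-null set on which this identity fails; along such a curve the chain rule gives $\tfrac{\dd}{\dd s}Y_s=-D_xH(X_s,Y_s,\rho_s)$ for a.e.\ $s$, and integrating from $t$ to $T$ together with $Y_T=-D_xg(X_T,\rho_T)$ yields the backward equation, the stochastic term being absent. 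Collecting the forward equation, the two cases of the backward equation, and the adaptedness and integrability noted above shows that $(X,Y,Z)$ is a strong solution of \eqref{eq:FBSDE}. I expect the main obstacle to be precisely this rigorous justification of It\^o's formula, respectively of the chain rule along characteristics, under the merely $C^{1,1}$ hypotheses on the data: in the parabolic case one first has to squeeze an extra spatial derivative out of $u$ in the interior via the bootstrap above, and then control the passage to $t=T$ using only the uniform bound on $D^2_{xx}u$ and the terminal condition; in the deterministic case the delicate point is the Fubini argument ensuring that a.e.\ characteristic misses the null set where $D_xu$ fails to be twice differentiable.
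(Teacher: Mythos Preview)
Your approach works, but it diverges from the paper's. Rather than case-splitting on $\beta$, the paper mollifies $u$ in the space variable, setting $u_n:=u*\zeta_n$ and $v_n:=D_xu_n$; since $v_n$ is then $C^{1,2}$ (smooth in $x$ by construction, and $C^1$ in $t$ because $\partial_t u$ is locally bounded), It\^o's formula applies directly to $v_n(t,X_t)$ for any value of $\beta$, the differentiated mollified HJB equation collapses the drift terms, and one passes to the limit $n\to\infty$. This is more elementary than your plan: no interior parabolic Schauder bootstrap, no Fubini argument along characteristics, and the regimes $\beta\neq0$ and $\beta=0$ are handled by a single computation. Your route, in exchange, extracts the genuinely stronger information that $D_xu\in C^{1,2}_{\rm loc}$ in the interior when $\beta\neq0$, which the mollification argument never sees; for the bootstrap you should also record that $t\mapsto\rho_t$ is H\"older in $W_2$ (via the flow representation) so that the coefficients of the linear system for $w$ are parabolically H\"older. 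One point to tighten in your deterministic case: the bi-Lipschitz/Fubini step gives the chain rule identity only for Lebesgue-a.e.\ initial point $x$, not $\rho_0$-a.e.\ $x$, and $\rho_0\in\sP_2(\R^d)$ may well be singular; you close this by observing that both sides of the backward equation depend continuously on $x$, so validity on a Lebesgue-full set upgrades to validity everywhere.
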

\begin{proof} 
The idea of the proof is based on the differentiation of the Hamilton-Jacobi-Bellman equation in the $x$-variable. To be able to justify this, we first regularize the equation.

Let $\{\zeta_n\}_n$ be a sequence of densities in $C_c^{\infty}(B_{\frac{1}{n}}(0))$. Define
\begin{equation*}\label{eq:un}
u_n(t,x):=\int_{\mathbb R^d}u(t,x-y)\zeta_n(y)\dd y, v_n(t,x):=D_xu_n(t,x) \text{ and } v(t,x)=D_xu(t,x).
\end{equation*}
Then
\begin{equation}\label{eq:HJn}
\partial_t u_n(t,x)=-\frac{\beta^2}{2}\Delta_x u_n(t,x)+\int_{\mathbb R^d}\zeta_n(y)H(x-y,-v(t,x-y),\rho_t)\dd y.
\end{equation}
We then differentiate \eqref{eq:HJn} in $x$ and obtain
\begin{eqnarray}\label{eq:DxHJn}
&&\partial_t v_n(t,x)+\frac{\beta^2}{2}\Delta_x v_n(t,x)\\
&=&\int_{\mathbb R^d}\zeta_n(y)[D_xH(x-y,-v(t,x-y),\rho_t)-D_pH(x-y,-v(t,x-y),\rho_t)D_xv(t,x-y)]\dd y.\nonumber
\end{eqnarray}
Let $(X,Y,Z)$ be defined in \eqref{eq:flow}, \eqref{eq:Y} and \eqref{eq:Z}. By It\^o formula and using \eqref{eq:DxHJn}, we have

\begin{align*}
&\dd v_n(t,X_t)\\
&=\left[\partial_tv_n(t,X_t)+\frac{\beta^2}{2}\Delta_xv_n(t,X_t)\right]\dd t+D_xv_n(t,X_t)\cdot \left[D_pH(X_t,-v(t,X_t),\rho_t)\dd t+\beta \dd B_t\right]\\
&=\left[\int_{\mathbb R^d}\zeta_n(y)[D_xH(X_t-y,-v(t,X_t-y),\rho_t)-D_pH(X_t-y,-v(t,X_t-y),\rho_t)D_xv(t,X_t-y)]\dd y\right]\dd t\\
&+D_xv_n(t,X_t)\cdot \left[D_pH(X_t,-v(t,X_t),\rho_t)\dd t+\beta \dd B_t\right].\\
\end{align*}\normalsize
Letting $n\to+\infty$ in the above equation, we have
\[
\dd v(t,X_t)= D_xH(x,-v(t,X_t),\rho_t)\dd t+\beta D_xv(t,X_t)\dd B_t,
\]
which is exactly the backward (stochastic) differential equation \eqref{eq:FBSDE}.
\end{proof}

\begin{theorem}\label{thm:mon_momentum}
We suppose that all the assumptions \eqref{hyp:H-L}-\eqref{hyp:L-mon-2} take place. Let us suppose that $(u^1,\rho^1)$ and $(u^2,\rho^2)$ are two solution pairs to \eqref{eq:mfg} with initial data $\rho^1_0,\rho^2_0$, respectively. Suppose that $X^i$, $i\in\{1,2\}$, stand for the flows of the vector fields $(t,x)\mapsto D_p H(t,x,-D_x u^i(t,x),\rho^i_t)$ defined in \eqref{eq:flow} with initial data $\xi^i\in \mathbb L^2(\mathcal{F}_0;\rho_0^i)$. Then $D_xu^1$ and $D_xu^2$ are jointly monotone along the flows $(X_t^1)_{t\in[0,T]}$ and $(X_t^2)_{t\in[0,T]}$, respectively. That is
\begin{equation}\label{eq:dispu}
\mathbb{E}\left([D_xu^1(t,X_t^1)-D_xu^2(t,X^2_t)]\cdot(X_t^1-X_t^2)\right)\ge 0,\ \forall\ t\in[0,T].
\end{equation}
%Using the corresponding momentum variables, this reads as 
%\begin{align}\label{eq:displ}
%\mathbb{E}\left\{[Y_t^1-Y_t^2]\cdot[X_t^1-X^2_t]\right\}\le 0,\ \forall\ t\in[t_0,T].
%\end{align}
\end{theorem}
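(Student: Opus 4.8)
The plan is to study the scalar function
\[
\phi(t):=\mathbb E\big[(X^1_t-X^2_t)\cdot(Y^1_t-Y^2_t)\big],\qquad\text{where }\ Y^i_t:=-D_xu^i(t,X^i_t)
\]
as in \eqref{eq:Y}, and to show that $\phi$ is nondecreasing on $[0,T]$ with $\phi(T)\le 0$; since $Y^i_t=-D_xu^i(t,X^i_t)$, the desired inequality \eqref{eq:dispu} is exactly $\phi(t)\le 0$, so this suffices. Two facts underlie the whole argument. First, both flows $X^1,X^2$ are driven by the \emph{same} Brownian motion $B$ in \eqref{eq:flow} (only the initial data $\xi^i\in\mathbb L^2(\mathcal F_0)$, which may be coupled arbitrarily, and the coefficients $(u^i,\rho^i)$ differ), and $\mathcal L_{X^i_t}=\rho^i_t$ for every $t\in[0,T]$ --- this holds because the drift $x\mapsto D_pH(x,-D_xu^i(t,x),\rho^i_t)$ is globally Lipschitz (by the a priori bound $\|D^2_{xx}u^i(t,\cdot)\|_{L^\infty(\mathbb R^d)}\le C_0$ built into the notion of solution, cf.\ Lemma \ref{lem:HJ}) and continuous in $t$, so its flow transports $\rho^i_0$ onto $\rho^i_t$. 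Second, by Theorem \ref{thm:FBSDE} the triple $(X^i,Y^i,Z^i)$ from \eqref{eq:flow}, \eqref{eq:Y}, \eqref{eq:Z} solves the forward--backward system \eqref{eq:FBSDE}, which is what makes the computation below legitimate.

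For the terminal value, the terminal conditions $u^i(T,\cdot)=g(\cdot,\rho^i_T)$ give $Y^i_T=-D_xg(X^i_T,\rho^i_T)$, hence
\[
\phi(T)=-\,\mathbb E\big[\big(D_xg(X^1_T,\rho^1_T)-D_xg(X^2_T,\rho^2_T)\big)\cdot(X^1_T-X^2_T)\big]\le 0
\]
by the displacement monotonicity assumption \eqref{hyp:g_mon-2} on $g$, applied with $\mu_i=\mathcal L_{X^i_T}=\rho^i_T$. For the time-monotonicity, set $\bar X_t:=X^1_t-X^2_t$, $\bar Y_t:=Y^1_t-Y^2_t$, and abbreviate $\delta_xH_t:=D_xH(X^1_t,Y^1_t,\rho^1_t)-D_xH(X^2_t,Y^2_t,\rho^2_t)$ and $\delta_pH_t:=D_pH(X^1_t,Y^1_t,\rho^1_t)-D_pH(X^2_t,Y^2_t,\rho^2_t)$. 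The forward equations in \eqref{eq:FBSDE} give $\dd\bar X_t=\delta_pH_t\,\dd t$, the $\beta\,\dd B_t$ contributions cancelling; the backward equations give $\dd\bar Y_t=-\delta_xH_t\,\dd t+\beta(Z^1_t-Z^2_t)\,\dd B_t$ (the $Z$-terms absent when $\beta=0$). Since $\bar X$ has no martingale part, It\^o's product rule yields $\dd(\bar X_t\cdot\bar Y_t)=\big(-\bar X_t\cdot\delta_xH_t+\bar Y_t\cdot\delta_pH_t\big)\dd t+\beta\,\bar X_t\cdot(Z^1_t-Z^2_t)\,\dd B_t$, where the stochastic integral is a genuine martingale because $\bar X_t\in L^2$ (as $\mathcal L_{X^i_t}=\rho^i_t\in\sP_2(\mathbb R^d)$) and $Z^i_t=-D^2_{xx}u^i(t,X^i_t)$ is bounded. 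Taking expectations,
\[
\phi'(t)=\mathbb E\big[-\bar X_t\cdot\delta_xH_t+\bar Y_t\cdot\delta_pH_t\big]\ge 0,
\]
the last inequality being precisely the displacement monotonicity assumption \eqref{Hdis} on $H$, applied with $X^i=X^i_t$, $P^i=Y^i_t$ and $\mu_i=\mathcal L_{X^i_t}=\rho^i_t$.

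Combining the two steps gives $\phi(t)\le\phi(T)\le 0$ for every $t\in[0,T]$, which is exactly \eqref{eq:dispu}. The deterministic case $\beta=0$ is covered verbatim, with all stochastic integrals absent and $\mathbb E$ the average over the random initial condition. I expect the only genuinely delicate points to be the bookkeeping ones: verifying $\mathcal L_{X^i_t}=\rho^i_t$ (so that $\rho^i_t$ may legitimately play the role of the law in \eqref{hyp:g_mon-2} and \eqref{Hdis}), and the integrability needed both to pass the derivative inside the expectation defining $\phi$ and to discard the martingale term --- all consequences of the uniform $C^{1,1}$ bound on $u^i(t,\cdot)$ from Lemma \ref{lem:HJ}, the uniform second-moment control of $\rho^i$, and, when $\beta\neq 0$, the interior parabolic $C^{2,\alpha}$ regularity of $u^i$ recalled just before Theorem \ref{thm:FBSDE}. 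Once these are in place the conclusion is immediate: a nondecreasing function which is nonpositive at the endpoint $t=T$ is nonpositive on all of $[0,T]$.
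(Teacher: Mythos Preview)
Your proof is correct and follows essentially the same approach as the paper's: both apply Theorem~\ref{thm:FBSDE} to obtain the FBSDE representation, compute the It\^o differential of $\bar X_t\cdot\bar Y_t$ (the paper phrases it as the differential of $-(\bar X_t\cdot\bar Y_t)$), use \eqref{Hdis} to get the sign of the drift, and then combine with the terminal inequality from \eqref{hyp:g_mon-2}. Your additional remarks on the integrability needed to drop the martingale term and on $\mathcal L_{X^i_t}=\rho^i_t$ are welcome clarifications that the paper leaves implicit.
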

\begin{proof}
Define $Y^i_t:=-D_xu^i(t,X_t^i)$ and $Z_t^i:=- D^2_{xx}u^i(t,X_t^i)$ for $i\in\{1,2\}$. Applying Theorem \ref{thm:FBSDE}, we have
\begin{equation}\label{BSDEi}
Y_t^i=-D_xg(X_T^i,\rho_T^i)+\int_t^TD_xH(X_s^i, Y_s^i,\rho_s^i)\dd s-\beta\int_t^TZ_s^i\dd B_s,
\end{equation}
and thus
\begin{align*}
&\dd\left[\left(D_xu^1(t,X_t^1)-D_xu^2(t,X_t^2)\right)\cdot(X_t^1-X_t^2) \right]\\
&= -\dd\left[\left(Y^1_t-Y^2_t\right)\cdot(X_t^1-X_t^2) \right]\\
&=(D_xH(X_t^1,Y_t^1,\rho_t^1)-D_xH(X_t^2,Y_t^2,\rho_t^2))\cdot (X^1_t-X^2_t)\dd t-\beta(Z_t^1-Z_t^2)\cdot(X_t^1-X_t^2)\dd B_t\\
&\quad-\left\{(Y_t^1-Y_t^2)\cdot\left(D_pH(X_t^1,Y_t^1,\rho_t^1)-D_pH(X_t^2,Y_t^2,\rho_t^2)\right)\right\}.
%&=(X^1(t,x)-X^2(t,x))\cdot\left(D_x H(X^1(t,x),-Du^1(t,X^1(t,x)))-D_x H(X^2(t,x),-Du^2(t,X^2(t,x)))\right)\\
%&-(X^1(t,x)-X^2(t,x))\cdot\left(D_x f(X^1(t,x),\rho^1_t)-D_x f(X^2(t,x),\rho^2_t) \right)\\
%&+(Du^1(t,X^1(t,x))-Du^2(t,X^2(t,x)))\cdot\left(D_pH(X^1(t,x),-Du^1(t,X^1(t,x)))-D_pH(X^2(t,x),-Du^2(t,X^2(t,x))))\right)\\
\end{align*}
Taking expectation on both sides above, we obtain
\begin{align}\label{eq:udisp}
&\dd\mathbb E\left\{\left[\left(D_xu^1(t,X_t^1)-D_xu^2(t,X_t^2)\right)\cdot(X_t^1-X_t^2) \right]\right\}\\
&=\mathbb E\Big\{(D_xH(X_t^1,Y_t^1,\rho_t^1)-D_xH(X_t^1,Y_t^1,\rho_t^1))\cdot (X^1_t-X^2_t)\Big\}\dd t\nonumber\\
&\quad-\mathbb E\Big\{\left[(Y_t^1-Y_t^2)\cdot\left(D_pH(X_t^1,Y_t^1,\rho_t^1)-D_pH(X_t^2,Y_t^2,\rho_t^2)\right)\right]\Big\}\nonumber\\
&\leq 0\nonumber
%&=(X^1(t,x)-X^2(t,x))\cdot\left(D_x H(X^1(t,x),-Du^1(t,X^1(t,x)))-D_x H(X^2(t,x),-Du^2(t,X^2(t,x)))\right)\\
%&-(X^1(t,x)-X^2(t,x))\cdot\left(D_x f(X^1(t,x),\rho^1_t)-D_x f(X^2(t,x),\rho^2_t) \right)\\
%&+(Du^1(t,X^1(t,x))-Du^2(t,X^2(t,x)))\cdot\left(D_pH(X^1(t,x),-Du^1(t,X^1(t,x)))-D_pH(X^2(t,x),-Du^2(t,X^2(t,x))))\right)\\
\end{align}
where in the last inequality we have used \eqref{Hdis}. Now, integrating the previous inequality in time on $[t,T]$, we find
\begin{align*}
&\mathbb{E}\Big\{(D_xu^1(t,X_t^1)-D_xu^2(t,X_t^2))\cdot(X^1_t-X^2_t)\Big\}\\
&\ge \mathbb{E}\left\{\left(D_xg(X^1_T,\rho^1_T)-D_xg(X^2_T,\rho^2_T)\right)\cdot(X^1_T-X^2_T)\right\}\ge 0.
\end{align*}
And so, the thesis of the theorem follows by \eqref{hyp:g_mon-2}.
\end{proof}

\begin{corollary}

%(1) Let $(u,\rho)$ be a solution pair to \eqref{eq:mfg}, where $u$ is only a viscosity solution. Then,
%the previous result implies in particular (by taking $(u^1,\rho^1)=(u^2,\rho^2)$) that if $x,y\in\M$ are differentiability points of $u(t,\cdot)$, then
%$$[Du(t,x)-Du(t,y)]\cdot(x-y)\ge 0$$

If the assumptions in Theorem \ref{thm:mon_momentum} hold and $H(x,p,\mu)=\frac12|p|^2+f(x,\mu)$, then one has immediately 
\begin{equation}\label{eq:sharp}
W_2(\mathcal{L}_{X_t^1},\mathcal{L}_{X_t^2})\le W_2(\rho_0^1,\rho_0^2),\ \forall\ t\in[0,T].
\end{equation}
\end{corollary}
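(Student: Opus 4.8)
The plan is to exploit that for $H(x,p,\mu)=\tfrac12|p|^2+f(x,\mu)$ one has $D_pH(x,p,\mu)=p$, so that the flows from \eqref{eq:flow} read $X^i_t=\xi^i+\int_0^t\big(-D_xu^i(s,X^i_s)\big)\dd s+\beta B_t$ for $i\in\{1,2\}$, and the common Brownian part cancels in the difference. First I would invoke the standing assumption that $\mathbb P$ has no atom in $\mathcal{F}_0$ to pick $\xi^1\in\mathbb L^2(\mathcal{F}_0;\rho^1_0)$ and $\xi^2\in\mathbb L^2(\mathcal{F}_0;\rho^2_0)$ realizing an optimal coupling, i.e. $\mathbb E[|\xi^1-\xi^2|^2]=W_2^2(\rho^1_0,\rho^2_0)$, and take $X^1,X^2$ to be the associated flows (so that $\mathcal{L}_{X^i_t}=\rho^i_t$ for all $t$, by the Fokker--Planck representation).

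Subtracting the two flow equations gives, for a.e.\ $\omega$, the pathwise absolutely continuous identity $X^1_t-X^2_t=(\xi^1-\xi^2)-\int_0^t\big[D_xu^1(s,X^1_s)-D_xu^2(s,X^2_s)\big]\dd s$, with no stochastic integral remaining. Differentiating $|X^1_t-X^2_t|^2$ in time, using the uniform-in-$x$ Lipschitz bounds on $D_xu^i$ from Lemma \ref{lem:HJ} to differentiate under the expectation, I obtain
\[
\frac{\dd}{\dd t}\,\mathbb E\big[|X^1_t-X^2_t|^2\big]=-2\,\mathbb E\Big[\big(D_xu^1(t,X^1_t)-D_xu^2(t,X^2_t)\big)\cdot(X^1_t-X^2_t)\Big]\le 0,
\]
where the nonpositivity of the right-hand side is precisely the joint monotonicity \eqref{eq:dispu} from Theorem \ref{thm:mon_momentum}. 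Integrating on $[0,t]$ yields $\mathbb E[|X^1_t-X^2_t|^2]\le \mathbb E[|\xi^1-\xi^2|^2]=W_2^2(\rho^1_0,\rho^2_0)$.

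Finally, since $(X^1_t,X^2_t)$ is an admissible coupling of $\mathcal{L}_{X^1_t}$ and $\mathcal{L}_{X^2_t}$, the very definition \eqref{eq:Wass} of $W_2$ gives $W_2^2(\mathcal{L}_{X^1_t},\mathcal{L}_{X^2_t})\le \mathbb E[|X^1_t-X^2_t|^2]$, and \eqref{eq:sharp} follows upon taking square roots. There is essentially no real obstacle here: the only points demanding care are the legitimacy of the optimal initial coupling on the given probability space (granted by the atomless assumption on $\mathcal{F}_0$) and the justification of differentiating the expectation in $t$, both of which are routine given the a priori estimates already established; the crucial structural input is the cancellation $D_pH(x,p,\mu)=p$, which is exactly why this clean $W_2$-contraction is special to the separable quadratic-in-$p$ Hamiltonian.
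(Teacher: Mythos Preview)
Your proposal is correct and follows essentially the same approach as the paper's own proof: compute the time derivative of $\mathbb{E}|X^1_t-X^2_t|^2$, use $D_pH(x,p,\mu)=p$ to reduce it to $-2\,\mathbb{E}\big[(D_xu^1(t,X^1_t)-D_xu^2(t,X^2_t))\cdot(X^1_t-X^2_t)\big]$, invoke the joint monotonicity \eqref{eq:dispu} from Theorem~\ref{thm:mon_momentum}, integrate, and finally pick $\xi^1,\xi^2$ to realize $W_2(\rho^1_0,\rho^2_0)$. The only difference is that you spell out a few justifications (atomlessness of $\mathcal{F}_0$, cancellation of the Brownian increments, differentiation under the expectation) that the paper leaves implicit.
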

\begin{proof}Indeed, in this case one has 
\begin{align*}
&\frac{\dd}{\dd t}\frac12\mathbb{E} |X^1_t-X^2_t|^2 \\
&= \mathbb{E}\left\{(X^1_t-X^2_t)\cdot\left(D_pH(X^1_t,-D_xu^1(t,X^1_t),\rho^1_t)-D_pH(X^2_t,-D_xu^2(t,X^2_t),\rho^2_t)\right)\right\}\\
&=-\mathbb{E}\left\{(X^1_t-X^2_t)\cdot\left(D_xu^1(t,X^1_t)-D_xu^2(t,X^2_t)\right)\right\}\le 0,
\end{align*}
where in the last inequality we used the result from Theorem \ref{thm:mon_momentum}. Thus, the claim follows by integration over $[0,t]$ and choosing $\xi^i\in\mathbb L^2(\mathcal{F}_0;\rho_0^i)$, $i=1,2$, such that $W_2(\rho_0^1,\rho_0^2)=\left\{\mathbb E\left[|\xi^1-\xi^2|^2\right]\right\}^{1/2}$.
\end{proof}

%This means that in this case the the $W_2$ distance would decrease along any two solutions, and hence the uniqueness of solutions would readily follow.

\begin{remark}
For general Hamiltonians, we do not expect \eqref{eq:sharp} to hold true, since composition of monotone maps (in this case $D_pH$ and $D_xu$) in general fails to be monotone.
\end{remark}

\begin{theorem}\label{thm:unique-decay}
We suppose that all the assumptions \eqref{hyp:H-L}-\eqref{hyp:L-mon-2} take place. Let $(u^1,\rho^1)$ and $(u^2,\rho^2)$ be two solution pairs to \eqref{eq:mfg} with initial data $\rho^1_0,\rho^2_0\in\sP_2(\R^d)$, respectively. Then there exists $C>0$ depending only on $T$ and the data such that
\begin{align*}
\sup_{t\in[0,T]}W_2(\rho^1_t,\rho^2_t)\le C W_2(\rho^1_0,\rho^2_0),
\end{align*}
and
\begin{align*}
\sup_{t\in[0,T]}\left\|D_xu^1(t,\cdot)-D_xu^2(t,\cdot)\right\|_{L^\infty(\mathbb R^d)}\le CW_2(\rho_0^1,\rho_0^2).
\end{align*}
\end{theorem}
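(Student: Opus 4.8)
The plan is to run the two $L^{2}$--flows of the measure components \emph{together} with their backward (adjoint) variables, and to close a coupled estimate in which the forward part controls $W_2(\rho^1_t,\rho^2_t)$ and the backward part controls $\phi(t):=\|D_xu^1(t,\cdot)-D_xu^2(t,\cdot)\|_{L^\infty(\R^d)}$; the structural gain over Theorem~\ref{thm:mon_momentum} that makes this work for arbitrary $T$ is a coercivity in the momentum variable hidden in the displacement monotonicity. Concretely, I would fix $\xi^i\in\mathbb L^2(\mathcal F_0;\rho^i_0)$ with $\mathbb E|\xi^1-\xi^2|^2=W_2^2(\rho^1_0,\rho^2_0)$, take $(X^i,Y^i,Z^i)$ the solutions of \eqref{eq:FBSDE} furnished by Theorem~\ref{thm:FBSDE}, and write $\delta X_t=X^1_t-X^2_t$, $\delta Y_t=Y^1_t-Y^2_t$. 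Since $\mathcal L_{X^i_t}=\rho^i_t$ we have $W_2(\rho^1_t,\rho^2_t)\le(\mathbb E|\delta X_t|^2)^{1/2}$, and $\delta X_t$ is absolutely continuous in $t$ (the Brownian terms cancel), so $\tfrac{d}{dt}\mathbb E|\delta X_t|^2=2\mathbb E[\delta X_t\cdot\delta V_t]$ and $\tfrac{d}{dt}\mathbb E[\delta X_t\cdot\delta Y_t]=\mathbb E[\delta V_t\cdot\delta Y_t]-\mathbb E[\delta X_t\cdot\delta(D_xH)]$, with $\delta V_t,\delta(D_xH)$ the increments of $D_pH,D_xH$ along $(X^i_t,Y^i_t,\rho^i_t)$ (no It\^o correction appears).

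For the forward estimate I would use the Lyapunov functional $\Lambda(t):=\mathbb E|\delta X_t|^2-\lambda\,\mathbb E[\delta X_t\cdot\delta Y_t]$ for a small fixed $\lambda>0$. By Theorem~\ref{thm:mon_momentum}, $\mathbb E[\delta X_t\cdot\delta Y_t]\le0$, hence $\Lambda(t)\ge\mathbb E|\delta X_t|^2\ge0$. Expanding $\delta V_t$ and $\delta(D_xH)$ to first order in $(x,p)$ with the measure frozen at $\rho^2_t$ (the measure remainder being $\le L_H W_2(\rho^1_t,\rho^2_t)\le L_H(\mathbb E|\delta X_t|^2)^{1/2}$ by \eqref{HLip}), the \emph{off--diagonal} Hessian contributions cancel because $D^2_{xp}H=(D^2_{px}H)^{\top}$, and what survives is bounded below using $D^2_{pp}H\ge c_0I_d$ (and $D^2_{xx}H\le0$, i.e.\ convexity of $L(\cdot,\cdot,\mu)$ from Lemma~\ref{conv:2}, which only improves the constant): $\tfrac{d}{dt}\mathbb E[\delta X_t\cdot\delta Y_t]\ge c_0\mathbb E|\delta Y_t|^2-C\mathbb E|\delta X_t|^2-C(\mathbb E|\delta X_t|^2)^{1/2}(\mathbb E|\delta Y_t|^2)^{1/2}$. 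Combining this with $2\mathbb E[\delta X_t\cdot\delta V_t]\le C\mathbb E|\delta X_t|^2+C\mathbb E[|\delta X_t||\delta Y_t|]$ and using Young's inequality, the term $-\lambda c_0\mathbb E|\delta Y_t|^2$ absorbs every product containing $\delta Y_t$, leaving $\Lambda'(t)\le C_\lambda\mathbb E|\delta X_t|^2\le C_\lambda\Lambda(t)$, whence $\mathbb E|\delta X_t|^2\le\Lambda(t)\le e^{C_\lambda T}\Lambda(0)$. Finally the $C^{1,1}$ bound $\|D^2_{xx}u^i(0,\cdot)\|_{L^\infty}\le C_0$ from Lemma~\ref{lem:HJ}(iii) yields $\Lambda(0)\le C\big(W_2^2(\rho^1_0,\rho^2_0)+\phi(0)\,W_2(\rho^1_0,\rho^2_0)\big)$.

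For the backward estimate on $\phi$, given $(t_0,x_0)$ I would run \eqref{eq:FBSDE} on $[t_0,T]$ started from the deterministic point $x_0$ (the mollification argument of Theorem~\ref{thm:FBSDE} applies), getting $(\tilde X^i,\tilde Y^i)$ with $\tilde Y^i_{t_0}=-D_xu^i(t_0,x_0)$, so $|D_xu^1(t_0,x_0)-D_xu^2(t_0,x_0)|=|\tilde Y^1_{t_0}-\tilde Y^2_{t_0}|$, a deterministic quantity. For frozen measure $\mu$, the map $(x,p)\mapsto(-D_xH(\cdot,\cdot,\mu),D_pH(\cdot,\cdot,\mu))=(D_xL(x,v),v)$ with $v=D_pH(x,p,\mu)$ is monotone since $L(\cdot,\cdot,\mu)$ is convex (Lemma~\ref{conv:2}), and $D_xg(\cdot,\mu)$ is monotone (Lemma~\ref{conv}), while the $i$--dependence of these data is Lipschitz in the measure, hence of size $\le C\,W_2(\rho^1_s,\rho^2_s)$. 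Computing $\tfrac{d}{ds}\mathbb E[\delta\tilde X_s\cdot\delta\tilde Y_s]$, integrating over $[t_0,T]$, using $\delta\tilde X_{t_0}=0$ and the terminal monotonicity, and again invoking the momentum coercivity from $D^2_{pp}H\ge c_0I_d$, one gets $\int_{t_0}^T\mathbb E|\delta\tilde Y_s|^2\,ds\le C(T)\sup_sW_2^2(\rho^1_s,\rho^2_s)$; feeding this and the Gr\"onwall bound for $\sup_s\mathbb E|\delta\tilde X_s|$ into the backward equation for $\tilde Y^1-\tilde Y^2$ gives $|\tilde Y^1_{t_0}-\tilde Y^2_{t_0}|\le C(T)\sup_{s\in[t_0,T]}W_2(\rho^1_s,\rho^2_s)$, and taking the supremum over $x_0$, $\phi(t_0)\le C(T)\sup_{s\in[t_0,T]}W_2(\rho^1_s,\rho^2_s)$.

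To close the loop, set $R:=\sup_{t\in[0,T]}(\mathbb E|\delta X_t|^2)^{1/2}$ and $\e:=W_2(\rho^1_0,\rho^2_0)$. The backward estimate gives $\phi(0)\le\sup_t\phi(t)\le C(T)R$, and then the forward estimate gives $R^2\le e^{C_\lambda T}\Lambda(0)\le C(T)(\e^2+R\e)$, a quadratic inequality in $R$ that forces $R\le C(T)\e$. Hence $\sup_tW_2(\rho^1_t,\rho^2_t)\le R\le C(T)W_2(\rho^1_0,\rho^2_0)$ and $\sup_t\phi(t)\le C(T)R\le C(T)W_2(\rho^1_0,\rho^2_0)$, which is the assertion. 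The delicate point, and the only place the displacement monotonicity is used beyond Theorem~\ref{thm:mon_momentum}, is the Hessian cancellation in $\tfrac{d}{dt}\mathbb E[\delta X_t\cdot\delta Y_t]$ together with the ensuing coercivity in $\delta Y_t$: this is what prevents the constants from degenerating as $T$ grows, and it dovetails with the fact that $\phi(0)$ enters $\Lambda(0)$ only linearly, so the forward and backward estimates can be closed simultaneously for any horizon.
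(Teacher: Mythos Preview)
Your overall architecture --- forward $L^2$ estimate on $\delta X$ exploiting the sign of $\mathbb E[\delta X_t\cdot\delta Y_t]$ together with the momentum coercivity $D^2_{pp}H\ge c_0I_d$, a pointwise backward estimate on $\phi$ via the FBSDE started at a fixed $x_0$, and the quadratic closing argument --- is exactly the route the paper takes. Your Lyapunov functional $\Lambda(t)=\mathbb E|\delta X_t|^2-\lambda\,\mathbb E[\delta X_t\cdot\delta Y_t]$ is simply a differential repackaging of the paper's integrated identity; both extract the same coercive term $-\lambda c_0\,\mathbb E|\delta Y_t|^2$ and both arrive at \eqref{eq:rho} in the form $R^2\le C(\e^2+R\e)$.

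There is, however, a genuine gap in your backward estimate. When you freeze the measure and integrate $\tfrac{d}{ds}\mathbb E[\delta\tilde X_s\cdot\delta\tilde Y_s]$ on $[t_0,T]$, the coercivity gives
\[
\tfrac{c_0}{2}\int_{t_0}^T\mathbb E|\delta\tilde Y_s|^2\,ds
\le C\sup_s W_2^2(\rho^1_s,\rho^2_s)+C\int_{t_0}^T\mathbb E|\delta\tilde X_s|^2\,ds,
\]
and the last term does \emph{not} disappear. The only way you have to control $\int\mathbb E|\delta\tilde X|^2$ is the forward Gr\"onwall, which returns $C(T)\int\mathbb E|\delta\tilde Y|^2$; feeding this back yields an inequality of the form $\int\mathbb E|\delta\tilde Y|^2\le C(T)\int\mathbb E|\delta\tilde Y|^2+C'\sup W_2^2$, which closes only for small $T$. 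Frozen--measure convexity of $L$ and $g$ gives a monotone FBSDE, but monotonicity alone (with no strict coercivity in $\delta\tilde X$) does not furnish a $T$--global stability bound by the direct energy argument you sketch.

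The paper closes this step differently, and this is the missing ingredient: it keeps the decoupling relation $\tilde Y_s^i=-D_xu^i(s,\tilde X_s^i)$ together with the $C^{1,1}$ bound $\|D^2_{xx}u^i\|_{L^\infty}\le C_0$, so that $|\delta\tilde Y_s|\le C_0|\delta\tilde X_s|+\phi(s)$. This turns the forward SDE into a closed Gr\"onwall in $\delta\tilde X$ with source $\phi(s)+W_2(\rho^1_s,\rho^2_s)$, and then the expectation of the backward equation gives $\phi(t_0)\le C\sup_s W_2+C\int_{t_0}^T\phi(s)\,ds$, whence a \emph{backward} Gr\"onwall in $t_0$ yields $\sup_t\phi(t)\le C(T)\sup_t W_2(\rho^1_t,\rho^2_t)$ for arbitrary $T$. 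Once you insert this use of the decoupling field, your closing loop goes through unchanged.
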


\begin{proof}
Let $(X^i, Y^i, Z^i)$, $i\in\{1,2\}$, be given as in Theorem \ref{thm:mon_momentum}. We note from \eqref{eq:udisp} 
\begin{align}\label{eq:ddisplnew}
&\frac{\dd}{\dd t}\mathbb E\left[\left(Y_t^1-Y_t^2\right)\cdot\left(X_t^1-X_t^2\right)\right]\\
&=-\mathbb E\left[\left(X_t^1-X_t^2\right)\cdot\left(D_xH(X_t^1,Y_t^1,\rho^1_t)-D_xH(X_t^2,Y_t^2,\rho_t^2)\right)\right]\nonumber\\
&\quad+\mathbb E\left[(D_pH(X_t^1,Y_t^1,\rho_t^1)-D_pH(X_t^2,Y_t^2,\rho_t^2))\cdot(Y_t^1-Y_t^2)\right].\nonumber
\end{align}
We integrate \eqref{eq:ddisplnew} from $0$ to $t$ and using \eqref{eq:dispu} one obtains
\begin{align*}
0\ge&\mathbb{E}\left[[Y^1_t-Y^2_t)]\cdot(X^1_t-X^2_t)\right]\\
=&\mathbb{E}\left[[Y^1_0-Y^2_0]\cdot(X^1_0-X^2_0)\right]\nonumber\\
&-\int_0^t\mathbb E\left[\left(X_s^1-X_s^2\right)\cdot\left(D_xH(X_s^1,Y_s^1,\rho_s^1)-D_xH(X_s^2,Y_s^2,\rho_s^2)\right)\right]\nonumber\\
&\qquad\,\,\,-\mathbb E\left[(D_pH(X_s^1,Y_s^1,\rho_s^1)-D_pH(X_s^2,Y_s^2,\rho_s^2))\cdot(Y_s^1-Y_s^2)\right]\dd s,\nonumber
\end{align*}
which by \eqref{HLip} implies that
\begin{align*}
&c_0\int_0^t\mathbb E\left[\left|Y_s^1-Y_s^2\right|^2\right]\dd s\\
\leq& -\mathbb{E}\left[[Y^1_0-Y^2_0]\cdot(X^1_0-X^2_0)\right]+C\int_0^t\mathbb E\left[\left|Y^1_s-Y^2_s\right|\left|X^1_s-X^2_s\right|\right]+\mathbb E\left[\left|X_s^1-X_s^2\right|^2\right]\dd s\nonumber.
\end{align*}
Applying Young's inequality, we derive
\begin{align*}%\label{eq:Pdiff}
&\frac{c_0}{2}\int_0^t\mathbb E\left[\left|Y_s^1-Y_s^2\right|^2\right]\dd s\\
\leq& -\mathbb{E}\left[[Y^1_0-Y^2_0]\cdot(X^1_0-X^2_0)\right]+C\int_0^t\mathbb E\left[\left|X_s^1-X_s^2\right|^2\right]\dd s.%\nonumber.
\end{align*}
Then
\begin{align*}
\mathbb{E}\left[|X^1_t-X^2_t|^2\right]&\leq\mathbb{E}\left[|X^1_0-X^2_0|^2\right]+\int_0^t\mathbb{E}\left[\left|D_pH(X^1_s,Y^1_s,\rho^1_s)- D_pH(X^2_s,Y^2_s,\rho^2_s)\right|^2\right]\dd s\\
&\le \mathbb{E}\left[|X^1_0-X^2_0|^2\right]+C\int_0^t\mathbb E\left[\left|Y_s^1-Y_s^2\right|^2\right]+\mathbb{E}\left[\left|X^1_s-X^2_s\right|^2\right]\dd s\\
&\le \mathbb{E}\left[|X^1_0-X^2_0|^2\right] -\mathbb{E}\left[[Y^1_0-Y^2_0]\cdot(X^1_0-X^2_0)\right]+C\int_0^t\mathbb E\left[\left|X_s^1-X_s^2\right|^2\right]\dd s.
\end{align*}
We recall that $Y_t^i=-D_xu^i(t,X^i_t)$ and note that $|D_x^2u^i|\leq C$ for $i=1,2$. We have
\begin{align*}
\mathbb{E}\left[|X^1_t-X^2_t|^2\right]\le& \mathbb{E}\left[|X^1_0-X^2_0|^2\right]+ C\mathbb E\left[\left|X_0^1-X_0^2\right|^2+\left|D_x u^1(0,X_0^1)-D_x u^2(0,X_0^2)\right|\left|X_0^1-X_0^2\right|\right]\\
&+C\int_0^t\mathbb E\left[\left|X_s^1-X_s^2\right|^2\right]\dd s\\
\le&C \mathbb{E}\left[|X^1_0-X^2_0|^2\right]+C\left\{\mathbb E\left[\left|D_xu^1(0,X_0^1)-D_xu^2(0,X_0^2)\right|^2\right]\right\}^{\frac{1}{2}}\left\{\mathbb E\left[\left|X_0^1-X_0^2\right|^2\right]\right\}^{\frac{1}{2}}\\
&+C\int_0^t\mathbb E\left[\left|X_s^1-X_s^2\right|^2\right]\dd s.
\end{align*}
Using Gr\"onwall's inequality and the fact that $W_2^2(\rho_t^1,\rho_t^2)\leq \mathbb{E}|X^1_t-X^2_t|^2$, we have
\small
\begin{align}\label{eq:rho}
W_2^2(\rho_t^1,\rho_t^2)&\leq \mathbb{E}\left[|X^1_t-X^2_t|^2\right]\\
\nonumber&\le C\left(\mathbb E\left[\left|X_0^1-X_0^2\right|^2\right]+\left\{\mathbb E\left[\left|D_xu^1(0,X_0^1)-D_xu^2(0,X_0^2)\right|^2\right]\right\}^{\frac{1}{2}}\left\{\mathbb E\left[\left|X_0^1-X_0^2\right|^2\right]\right\}^{\frac{1}{2}}\right).
\end{align}\normalsize
For any given $t_0\in [0,T]$, we now take the conditional expectation $\mathbb E\left[\cdot | X_{t_0}^i=x\right]$ on \eqref{BSDEi} for $i=1,2$: $\forall t\in [t_0,T]$
\begin{equation}\label{eq:Yix}
Y_t^{i,t_0,x}=-D_xg(X_T^{i,t_0,x},\rho_T^{i})+\int_t^TD_xH(X_s^{i,t_0,x}, Y_s^{i,t_0,x},\rho_s^{i})\dd s-\beta\int_t^TZ_s^{i,t_0,x}\dd B_s,
\end{equation}
where 
\[
X_t^{i,t_0,x}=x+\int_{t_0}^tD_pH(X_s^{i,t_0,x},-D_xu^i(t,X_s^{i,t_0,x}),\rho_s^i)\dd s+\beta B_t,
\]
$Y_t^{i,t_0x}=-D_xu^i(t,X_t^{i,t_0,x})$ and $Z_t^{i,t_0,x}=- D^2_{xx}u^i(t,X_t^{i,t_0,x})$. 
By \eqref{HLip} and by the global Lipschitz property of $x\mapsto D_xu^i(t,x)$, uniformly on $t\in[0,T]$, it follows from standard SDE arguments that 
\begin{equation}\label{eq:DeltaXt0x}
\Big(\mathbb E\Big[\sup_{s\in [t_0,T]}\big|X_s^{1,t_0,x}-X_s^{2,t_0,x}\big|^2\Big]\Big)^{\frac{1}{2}}\leq C\int_{t_0}^T\big[\|D_xu^1(s,\cdot)-D_xu^2(s,\cdot)\|_{L^\infty(\mathbb R^d)}+W_2(\rho_s^1,\rho_s^2)\big]ds.
\end{equation}
Letting $t=t_0$ and taking expectation on \eqref{eq:Yix}, we have
\begin{align*}
D_xu^i(t_0,x)=\mathbb E\left[D_xg(X_T^{i,t_0,x},\rho_T^i)\right]-\int_{t_0}^T\mathbb E\left[D_xH(X_s^{i,t_0,x},Y_s^{i,t_0,x},\rho_s^i)\right]\dd s,\quad \text{for $i=1,2$,}
\end{align*}
and thus by \eqref{hyp:D_xg}, \eqref{HLip} and \eqref{eq:DeltaXt0x} we have
{\small
\begin{align*}
&\quad|D_xu^1(t_0,x)-D_xu^2(t_0,x) |\\
&\le C\left[\sup_{s\in[t_0,T]}W_2(\rho^1_s,\rho^2_s)+\int_{t_0}^T\mathbb E\left[\left|D_xu^1(s,X_s^{1,t_0,x})-D_xu^2(s,X_s^{2,t_0,x})\right|\right]\dd s\right]\\
&+C\left[\mathbb E |X_T^{1,t_0,x}-X_T^{2,t_0,x}|+\int_{t_0}^T\mathbb E\left|X_s^{1,t_0,x}-X_s^{2,t_0,x}\right|\dd s\right]\\
&\le C\left[\sup_{s\in[t_0,T]}W_2(\rho^1_s,\rho^2_s)+\int_{t_0}^T\left\|D_xu^1(s,\cdot)-D_xu^2(s,\cdot)\right\|_{L^\infty(\mathbb R^d)}\dd s\right]\\
&+C\left[\left(\mathbb E\big[ |X_T^{1,t_0,x}-X_T^{2,t_0,x}|^2\big]\right)^\frac12+\int_{t_0}^T\left(\mathbb E\big[|X_s^{1,t_0,x}-X_s^{2,t_0,x}|^2\big]\right)^\frac12\dd s\right]\\
&\le C\left[\sup_{s\in[t_0,T]}W_2(\rho^1_s,\rho^2_s)+\int_{t_0}^T\left\|D_xu^1(s,\cdot)-D_xu^2(s,\cdot)\right\|_{L^\infty(\mathbb R^d)}\dd s\right].
\end{align*}}
%}This in fact implies for any $t\in[0,T]$
%\small
%\begin{align*}
%\left\|D_xu^1(t,\cdot)-D_xu^2(t,\cdot)\right\|_{L^\infty(\mathbb R^d)}\le C\left[\sup_{s\in[0,T]}W_2(\rho^1_s,\rho^2_s)+\int_t^T\left\|D_xu^1(s,\cdot)-D_xu^2(s,\cdot)\right\|_{L^\infty(\mathbb R^d)}\dd s\right].
%\end{align*}\normalsize
By Gronwall's inequality, we derive
\begin{align}\label{eq:Du}
\left\|D_xu^1-D_xu^2\right\|_{L^\infty([0,T]\times\mathbb R^d)}\le C\sup_{t\in[0,T]}W_2(\rho^1_t,\rho^2_t).
\end{align}
Plugging \eqref{eq:Du} into \eqref{eq:rho} and applying Young's inequality, we obtain
\begin{align*}
\sup_{t\in[0,T]}W_2(\rho_t^1,\rho_t^2)\leq C\left\{\mathbb E\left[\left|X^1_0-X^2_0\right|^2\right]\right\}^{\frac{1}{2}}.
\end{align*}
We can choose $\xi^i$ to be such that $W_2(\rho_0^1,\rho_0^2)=\left\{\mathbb E\left[\left|\xi^1-\xi^2\right|^2\right]\right\}^{\frac{1}{2}}$ and thus
\begin{align}\label{eq:rhonew}
\sup_{t\in[0,T]}W_2(\rho_t^1,\rho_t^2)\leq CW_2(\rho_0^1,\rho_0^2).
\end{align}
Combining \eqref{eq:Du} and \eqref{eq:rhonew}
\begin{align}\label{eq:Dunew}
\left\|D_xu^1-D_xu^2\right\|_{L^\infty([0,T]\times\mathbb R^d)}\le CW_2(\rho_0^1,\rho_0^2).
\end{align}
%We recall that
%\begin{align*}
%u^i(0,x)=g(X_T^i,\rho_T^i)+\int_0^T {\color{red}L(X_t^i,D_pH(X_t^i,P_t^i,\rho_t^i),\rho^i_t)\dd t},\quad \text{for $i=1,2$,}
%\end{align*}
%{\color{red}and thus
%\begin{align*}
%|u^1(0,x)-u^1(0,x)|&\leq\mathbb E\left[|G(X_T^1,\rho_T^1)-G(X_T^2,\rho_T^2)|\right]+\int_0^T\mathbb E\left[\left|D_pH(X_t^1,P_t^1,\rho_t^1)-D_pH(X_t^i,P_t^1,\rho_t^1)\right|\right]dt\\
%&\leq C\left(\sup_{t\in[0,T]}W_2(\rho_t^1,\rho_t^2)+\left\|Du^1-Du^2\right\|_{L^\infty([0,T]\times\mathbb R^d)}\right)\\
%&\leq CW_2(\rho_0^1,\rho_0^2).
%\end{align*}
%(Chenchen: this argument would need a Lipschitz assumption on $L$, which we don't want. Actually, the uniqueness of $u$ should follow from the uniqueness of $\rho$ and the uniqueness of solution to the HJB equation with $\rho$ as data.)
%}

\end{proof}

\begin{corollary}\label{cor:uniqueness}
We suppose that all the assumptions \eqref{hyp:H-L}-\eqref{hyp:L-mon-2} take place. The mean field game system \eqref{eq:mfg} admits at most one solution pair $(u,\rho)$.
\end{corollary}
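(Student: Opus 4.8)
The plan is to derive uniqueness as a direct consequence of the stability estimate in Theorem~\ref{thm:unique-decay}. Suppose $(u^1,\rho^1)$ and $(u^2,\rho^2)$ are two solution pairs to~\eqref{eq:mfg} sharing the \emph{same} initial datum $\rho_0\in\sP_2(\R^d)$, i.e. $\rho^1_0=\rho^2_0=\rho_0$. Applying Theorem~\ref{thm:unique-decay} with $\rho^1_0=\rho^2_0$ gives $W_2(\rho^1_0,\rho^2_0)=0$, and hence
\[
\sup_{t\in[0,T]}W_2(\rho^1_t,\rho^2_t)\le C\cdot 0=0,\qquad \sup_{t\in[0,T]}\bigl\|D_xu^1(t,\cdot)-D_xu^2(t,\cdot)\bigr\|_{L^\infty(\R^d)}\le C\cdot 0=0.
\]
The first inequality immediately yields $\rho^1_t=\rho^2_t$ for every $t\in[0,T]$, so the measure components coincide.

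It remains to upgrade the identity $D_xu^1=D_xu^2$ on $[0,T]\times\R^d$ to $u^1=u^2$. Since $\rho^1=\rho^2=:\rho$, both $u^1$ and $u^2$ are viscosity solutions of the \emph{same} Hamilton--Jacobi--Bellman equation in~\eqref{eq:mfg} with the same terminal condition $u^i(T,\cdot)=g(\cdot,\rho_T)$. One route is to invoke the uniqueness of the viscosity solution directly: when $\beta\neq 0$ this follows from parabolic comparison, and when $\beta=0$ it follows from Lemma~\ref{lem:HJ} (which asserts a unique viscosity solution to~\eqref{eq:HJ} for a given curve $\rho$). Alternatively, and more elementarily, one can argue from $D_xu^1\equiv D_xu^2$: for each fixed $t$ the function $x\mapsto u^1(t,x)-u^2(t,x)$ has zero gradient on the connected set $\R^d$, hence equals some function $c(t)$ of $t$ alone; plugging back into the HJB equation (or using the terminal condition $c(T)=0$ together with $\partial_t u^i = -H(x,-D_xu^i,\rho_t)$ and the now-established equality of the right-hand sides) forces $c(t)\equiv 0$, so $u^1\equiv u^2$.

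Since there is essentially no freedom in this argument once Theorem~\ref{thm:unique-decay} is in hand, there is no real obstacle; the only point requiring a line of care is the passage from equality of spatial gradients to equality of the value functions, which is handled by either the comparison principle for the shared HJB equation or the connectedness argument above. Thus the mean field game system~\eqref{eq:mfg} admits at most one solution pair $(u,\rho)$, as claimed.
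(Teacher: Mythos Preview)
Your proposal is correct and follows essentially the same approach as the paper: apply Theorem~\ref{thm:unique-decay} with $\rho^1_0=\rho^2_0$ to obtain $\rho^1=\rho^2$ and $D_xu^1=D_xu^2$, then conclude $u^1=u^2$ via uniqueness of the HJB component given $\rho$. Your additional elementary argument through $c(t)$ is a harmless elaboration of the same step.
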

\begin{proof}
It is immediate that Theorem \ref{thm:unique-decay} yields the uniqueness of $\rho$ and $D_x u$. By the uniqueness of solutions to the HJB component of \eqref{eq:mfg} (given $\rho$), the uniqueness of $u$ follows.
\end{proof}

{\sc Acknowledgements.} ARM acknowledges the support of the Heilbronn Institute for Mathematical Research and the UKRI/EPSRC Additional Funding Programme for Mathematical Sciences through the focused research grant ``The master equation in Mean Field Games''. ARM has also been partially supported by the EPSRC New Investigator Award ``Mean Field Games and Master equations'' under award no. EP/X020320/1 and by the King Abdullah University of Science and Technology Research Funding (KRF) under award no. ORA-2021-CRG10-4674.2. CM gratefully acknowledges the support by CityU Start-up Grant 7200684, Hong Kong RGC Grant ECS 21302521 and Hong Kong RGC Grant GRF 11311422.

\end{document}